\newdimen\AAdi%
\newbox\AAbo%
\def\AAk#1#2{\s_etbox\AAbo=\hbox{#2}\AAdi=\wd\AAbo\kern#1\AAdi{}}%
\def\AAr#1#2#3{\s_etbox\AAbo=\hbox{#2}\AAdi=\ht\AAbo\raise#1\AAdi\hbox{#3}}%
\font\tenmsb=msbm10 at 12pt \font\sevenmsb=msbm7 at 8pt
\font\fivemsb=msbm5 at 6pt
\newtheorem{theorem}{Theorem}
\newtheorem{remark}[theorem]{Remark}
\newtheorem{corollary}[theorem]{Corollary}
\newtheorem{definition}[theorem]{Definition}
\newtheorem{lemma}[theorem]{Lemma}
\newtheorem{proposition}[theorem]{Proposition}
\numberwithin{equation}{section} \numberwithin{theorem}{section}
\renewcommand{\topmargin}{0cm}
\renewcommand{\oddsidemargin}{5mm}
\renewcommand{\evensidemargin}{5mm}
\renewcommand{\textwidth}{150mm}
\renewcommand{\textheight}{230mm}
\def\C{\mathbb C}
\def\R{\mathbb R}
\def\na{\nabla}
\def\bn{\overline\nabla}
\def\f#1#2{\frac{#1}{#2}}
\def\a{\alpha}
\def\be{\beta}
\def\r{\Re_{I\!V}}
\def\p#1{\partial #1}
\def\de{\delta}
\def\De{\Delta}
\def\e{\eta}
\def\ep{\epsilon}
\def\vep{\varepsilon}
\def\G{\Gamma}
\def\g{\gamma}
\def\k{\kappa}
\def\la{\lambda}
\def\La{\Lambda}
\def\lan{\langle}
\def\ran{\rangle}
\def\Om{\Omega}
\def\th{\theta}
\def\vth{\vartheta}
\def\Th{\Theta}
\def\si{\sigma}
\def\Si{\Sigma}
\def\r{\rho}
\def\z{\zeta}
\begin{document}

\title
[Hessian estimates for Lagrangian mean curvature equation]
{Hessian estimates for Lagrangian mean curvature equation with Lipschitz critical and supercritical phases}
\author{Qi Ding}
\address{Shanghai Center for Mathematical Sciences, Fudan University, Shanghai 200438, China}
\email{dingqi@fudan.edu.cn}

\thanks{
The author would like to express his sincere gratitude to Yu Yuan for his encouragement, interest and valuable discussion, which improved the quality of the manuscript. 
The author is partially supported by NSFC 12371053.}

\begin{abstract}

In this paper, we develop a new strategy to study Lagrangain mean curvature equation on open sets of $\R^{n}(n\ge2)$.
By establishing an Allard-type regularity theorem,
we obtain an interior Hessian estimate of solutions to this equation with prescribed Lipschitz critical and supercritical phases. Here, our condition on the phases is sharp. The proof heavily relies on geometric measure theory, geometry of Lagrangian graphs, and De Giorgi-Nash-Moser iteration.
We expect that the techniques and ideas developed here can be used in some other equations.

\end{abstract}

\maketitle

\section{Introduction}

Let $L$ be the graph in $\C^n\cong\R^n\times\R^n$ of a $C^2$ map $F:\,\Om\to\R^n$ for $n\ge2$, with an open set $\Om\subset\R^n$. $L$ is a Lagrangian submanifold of $\C^n$ if and only if the matrix $DF$ is symmetric with the derivative $D$ on $\R^n$. Let $\th_L$ be the phase of $L$, then $L$ has mean curvature $J\na^L\th_L$. Here, $J$ is the standard complex structure of $\C^n$, and $\na^L$ is the Levi-Civita connection on $L$.

For the simply connected $\Om$, there exists a $C^3$ function $u:\,\Om\to\R$ with $F=Du$.
Let $\la_1,\cdots,\la_n$ be the eigenvalues of Hessian matrix $D^2u$.
In this situation, $u$ satisfies the fully nonlinear equation 
\begin{equation}\aligned\label{LE*}
\mathrm{tr}(\arctan D^2u):=\sum_{i=1}^n\arctan\la_i=\th\qquad\qquad\mathrm{on}\ \Om,
\endaligned
\end{equation}
where $\th\in C^1(\Om,(-n\pi/2,n\pi/2))$ and $\th=\th_L$ (mod $2\pi$) if we regard $\th$ as a function on $L$ by identifying $\th(x,Du(x))=\th(x)$ for every $x\in\Om$. 
A Lagrangian submanifold is called \emph{special} if it is a minimal submanifold at the same time.
For an open set $\Om'\subset\R^n$, the graph of $Dw$ over $\Om'$ is a special (minimal) Lagrangian submanifold of $\C^n$  if and only if $w$ satisfies 
\begin{equation}\aligned\label{SLE}
\mathrm{tr}(\arctan D^2w)=\Th\qquad\qquad\mathrm{on}\ \Om',
\endaligned
\end{equation}
where $\Th\in(-n\pi/2,n\pi/2)$ is a constant. 
The equation \eqref{SLE} is called \emph{special Lagrangian equation}, and we call the equation \eqref{LE*} \emph{Lagrangian mean curvature equation}. 

Hessian estimate takes an important role in studying regularity of solutions to \eqref{SLE} or general \eqref{LE*}.
There is a long story on Hessian estimates for \eqref{SLE} since 1950s. 
Let us review it briefly.
For dimension $n = 2$, Heinz \cite{He} derived a Hessian bound for \eqref{SLE}  with $\Th=\pi/2$ (i.e., the Monge-Amp\'ere equation); Pogorelov \cite{P} got Hessian estimates for \eqref{SLE}  with $\Th>\pi/2$. 
For dimension $n=3$, Hessian bounds for \eqref{SLE} were obtained by Bao-Chen \cite{BC} for $\Th=\pi$, and Warren-Yuan \cite{WmY3,WmY4} for $|\Th|\ge\pi/2$.
$\si_2$ equations on $\R^3$ can be reduced to \eqref{SLE} with $\Th=\pi/2$ (see \cite{GQ,MSY,Q,SY0,SY1,SY2} on $\si_2$ equations in $\R^m(m\ge3)$ for more results).
For every dimension $n$, Chen-Warren-Yuan \cite{CWY} derived \emph{a priori} interior Hessian estimates for convex solutions to \eqref{SLE}. See Warren-Yuan \cite{WmY2} for Hessian estimates with small gradients, and Chen-Shankar-Yuan \cite{CSY} for convex viscosity solutions.

In \cite{Y1}, Yuan observed that the level set of $\{\sum_i\arctan\la_i=\Th\}$ is strictly convex (or convex) if and
 only if $|\Th|>\f{n-2}2\pi$ (or $|\Th|\ge\f{n-2}2\pi$). Naturally, $\pm\f{n-2}2\pi$ is called the \emph{critical} phase.
Moreover, the phase $\Th$ is said to be \emph{supercritical} if $\f{n-2}2\pi<|\Th|<\f n2\pi$, and  \emph{subcritical} if $|\Th|<\f{n-2}2\pi$.
In Nadirashvili-Vl$\mathrm{\breve{a}}$dut \cite{NV} and Wang-Yuan \cite{WdY0}, they constructed counter-examples showing that there are smooth solutions to \eqref{SLE} for each subcritical phase with uniformly bounded gradient but non-uniformly bounded Hessian (See Mooney-Savin \cite{M-S} for non $C^1$ examples).
In every dimension $n\ge3$, Wang-Yuan \cite{WdY} obtained \emph{a priori} interior Hessian estimates for \eqref{SLE} with the critical and supercritical phase, i.e., $|\Th|\ge\f{n-2}2\pi$ (see other proofs by Li \cite{Li} and myself \cite{D1} for $|\Th|>\f{n-2}2\pi$, and by Shankar \cite{Sh} for $|\Th|\ge\f{n-2}2\pi$).

In Theorem 8 of \cite{Wm}, Warren derived interior Hessian estimates for convex solutions to \eqref{LE*} with the $C^{1,1}$ phase $\th$.
For the $C^{1,1}$ critical and supercritical phase $\th$, interior Hessian estimates for \eqref{LE*} have been established by Bhattacharya \cite{Ba1,Ba2}, Bhattacharya-Mooney-Shankar \cite{BMS} and Lu \cite{Lu}. 
Recently, Zhou \cite{Z} derived interior Hessian estimates for solutions to \eqref{LE*} with the Lipschitz phase $\th$ provided $\th$ is supercritical, i.e., $\f{n-2}2\pi+\de\le|\th|<\f n2\pi$ for some $\de>0$, where the Hessian bound depends on $\de$. Moreover, Hessian estimate for convex solutions is also derived in \cite{Z}.

Compared with Wang-Yuan's Hessian estimates in \cite{WdY}, it's natural to expect Hessian estimates for solutions to \eqref{LE*}
only under the condition of the Lipschitz critical and supercritical phase $\th$, i.e., the Lipschitz $\th$ satisfies $\f{n-2}2\pi\le|\th|<\f n2\pi$. 

It's recorded as (2.1) in \cite{Y1}, solutions to \eqref{LE*} for supercritical phases are semi-convex, and (in principle) have better 'compactness', compared with solutions to \eqref{LE*} for critical and supercritical phases. 
Another important advantage of the supercritical phase $\th$ is that it can give a Jacobi inequality involving a divergence term of derivatives of $\th$ (see \cite{Z}), while such a Jacobi inequality does not hold for critical and supercritical phases.

Hessian estimates for the special Lagrangian equation \eqref{SLE} are equivalent to gradient estimates for special (minimal) Lagrangian graphs. In the case of codimension one, Bombieri-De Giorgi-Miranda \cite{BDM} derived interior gradient estimates for the minimal hypersurface equation, where 2-dimensional case had been obtained by Finn \cite{Fi}.

Before \cite{BDM}, De Giorgi had obtained a (purely
existential) gradient estimate for the minimal hypersurface equation in \cite{De}.
In the spirit of De Giorgi, we obtain \emph{a priori} Hessian estimates for  \eqref{LE*} with $\f{n-2}2\pi\le|\th|<\f n2\pi$ by
developing a new strategy based on geometric measure theory and geometry of Lagrangian graphs as well as De Giorgi-Nash-Moser iteration. 
\begin{theorem}\label{main}
Let $u$ be a  $C^4$-solution to \eqref{LE*} on the unit ball $B_1\subset\R^n$ for $n\ge2$ with the phase $\th\ge\f{n-2}2\pi$ on $B_1$, then $|D^2u|$ is bounded on $B_{1/2}$ by a constant depending only on $n$, $\sup_{B_1}|Du|$, and the Lipschitz norm of $\th$.
\end{theorem}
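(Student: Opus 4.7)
I would work on the Lagrangian graph $L=\{(x,Du(x)):x\in B_1\}$ in $\C^n\cong\R^{2n}$, whose mean curvature is $H=J\na^L\th$ and hence bounded in $L^\infty$ by $\mathrm{Lip}(\th)$. Bounding $|D^2u|(0)$ is equivalent to bounding the slope of $L$ over $0$, or equivalently the volume element $V(x)=\prod_{i=1}^n\sqrt{1+\la_i^2(x)}$. The strategy is a contradiction-compactness argument whose three pillars are (i) a Jacobi-type inequality on $L$ valid at $\th\ge\f{n-2}2\pi$, (ii) De Giorgi-Nash-Moser iteration on $L$ via the Michael-Simon Sobolev inequality, and (iii) an Allard-type $\varepsilon$-regularity theorem tailored to Lagrangian graphs.

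First, I would derive a divergence-form Jacobi inequality on $L$. For strictly supercritical $\th$ this takes Zhou's shape $\De_L\log V\ge c|A|^2+\div(\cdots)-C|\na\th|$; at the critical phase the convexity of $\{\sum\arctan\la_i=\th\}$ degenerates and the Jacobi inequality survives only in weak divergence form without a strict subharmonic term. Second, I would prove the Allard-type theorem: there exist $\varepsilon_0,C$ such that if $L=\mathrm{graph}(Du)$ is a Lagrangian graph over $B_r$ with $\mathrm{Lip}(\th)\le1$, critical/supercritical phase, and area ratio $\le(1+\varepsilon_0)\om_n r^n$, then $|D^2u|\le C$ on $B_{r/2}$. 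Moser iteration (using Michael-Simon, whose constant is controlled by $\|H\|_\infty$) turns the weak Jacobi inequality plus smallness of the area deficit into a pointwise bound on $V$. This is essentially a quantitative regularity criterion that does not require the pointwise strict convexity arguments of Wang-Yuan or Bhattacharya-Mooney-Shankar.

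The main theorem would then follow by a blow-up contradiction. Assume a sequence $u_k$ on $B_1$ with uniformly bounded $|Du_k|$ and $\mathrm{Lip}(\th_k)$ but $|D^2u_k|(y_k)\to\infty$ for some $y_k\in B_{1/2}$. A suitable translation to $y_k$ combined with dilation in $\R^{2n}$ produces rescaled Lagrangian graphs $\widetilde L_k$ whose phases $\widetilde\th_k$ have $\mathrm{Lip}$ norms tending to zero and whose mean curvatures vanish in the limit. By integral varifold compactness and monotonicity, up to a subsequence $\widetilde L_k$ converges to a stationary Lagrangian integral varifold $L_\infty$ with critical or supercritical constant phase. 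Wang-Yuan \cite{WdY} applies to $L_\infty$ (which is smooth by their estimate), showing it has density $1$ at the origin. The Allard-type theorem, applied to $\widetilde L_k$ for $k$ large, then yields a uniform bound on the Hessian at the rescaled origin, contradicting the normalization of the blow-up.

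The hard part will be establishing the Allard-type theorem at the critical phase $\th=\f{n-2}2\pi$. With strict supercriticality the Jacobi inequality has a genuine $|A|^2$ term and the Moser iteration is essentially the minimal-surface one; at the critical threshold this term disappears, and the iteration must instead exploit only the divergence identity with right-hand side controlled by $|\na\th|$, together with a careful use of the merely convex level set geometry. A second delicate point is verifying density-one at the base point of the varifold limit without smooth convergence, which requires the monotonicity formula for varifolds with bounded $H$. These two technical maneuvers are what replace the strict-convexity-based pointwise machinery available in the $C^{1,1}$ phase work of Bhattacharya, Bhattacharya-Mooney-Shankar and Lu.
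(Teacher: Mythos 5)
Your broad architecture — a Jacobi-type inequality, De Giorgi–Nash–Moser iteration on the graph, an Allard-type $\varepsilon$-regularity statement, and a blow-up contradiction — matches the paper's skeleton, but there are two genuine gaps in the pillars that actually carry the load.

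First, you write that Wang–Yuan \cite{WdY} "applies to $L_\infty$ (which is smooth by their estimate)." Wang–Yuan's theorem is an \emph{a priori} Hessian estimate for $C^4$ graphical solutions; it does not confer smoothness on a stationary integral varifold obtained by weak compactness, which need not be a graph at all (it could, for instance, contain a vertical $n$-plane $\{x_*\}\times\R^n$). The paper handles this with Lemma~\ref{Key}: it uses the calibration form $\mathrm{Re}(e^{-\sqrt{-1}\th}dz)$ to show the limit current is area-minimizing, invokes Almgren/De Lellis–Spadaro to control the singular set, and then proves the rigidity Theorem~\ref{Fredholm} ("either $L$ is a graph or $L$ is flat"). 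Your proposal has no substitute for this rigidity step, and it is nontrivial — its proof requires both the Harnack inequality for the rotated Hessian component $w_{nn}$ (Theorem~\ref{Harnack}) and a superharmonicity argument for a homogeneous polynomial extracted from the Taylor expansion of $\prod_{i\le m}\cos\Th_i$.

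Second, your Allard-type theorem "area ratio $\le (1+\varepsilon_0)\omega_n r^n\Rightarrow |D^2u|\le C$ on $B_{r/2}$" is essentially a restatement of the theorem to be proved, and your sketch of Moser iteration on the divergence-form Jacobi inequality does not yield it. The standard Allard theorem only produces a $C^{1,\a}$ graph over \emph{some} $n$-plane $P$, and when $|D^2u|$ is large that $P$ is nearly the vertical plane; small area deficit does not by itself forbid this. Moreover, at the critical phase the Jacobi inequality of Lemma~\ref{Jacobiv*} does \emph{not} hold unconditionally — it needs the eigenvalue separation $\la_m/2\ge\la_{m+1}\ge1$ (Lemma~\ref{divthk*}). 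The paper threads this needle by first proving the weaker Allard-type Theorem~\ref{CaTanPlane} (H\"older control of the tangent plane, not the Hessian), then establishing the dichotomy of Lemma~\ref{blalul} that forces either one of the constrained Hessian estimates (Theorem~\ref{Hesslan-2} or Corollary~\ref{Hessvspe}) to kick in, or else $\underline{\la}_\ell\to-\infty$; the latter scenario is killed by a second pass through the rotation $\mathfrak{R}_{\be^*}$ and the Harnack inequality for $w_{nn}$, showing a vertical segment in the limit must propagate, which is impossible for a limit of bounded graphs. This propagation-via-Harnack mechanism, and the two-constraint alternative preceding it, are the essential new ideas, and neither appears in your sketch.
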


Here, $B_r\subset\R^n$ denotes the ball of radius $r$ centered at the origin.
In fact, Theorem \ref{main} implies $u\in C^{2,\a}$ for each $\a\in(0,1)$ by the Evans-Krylov-Safonov theory.
Heuristically, the key idea of our proof of Theorem \ref{main} consists of two aspects. On the one hand, we obtain regularity and rigidity results under selected constraints using PDE. On the other hand, we successfully combine these results to prove Theorem \ref{main} through several cases by establishing an Allard-type regularity theorem (see Theorem \ref{CaTanPlane}), where the constraints are satisfied in suitable situations. The proof will be completed in $\S$ 7.

The Dirichlet problem for \eqref{LE*} has been studied in \cite{Ba3, BMS, BW, CNS, CP, CPW, DDT, HL1, HL2, Lu, Y1} and so on.
As an application of Theorem \ref{main}, the condition of the critical and supercritical phases can be weakened from $C^{1,1}$ to Lipschitz one when we study solutions to \eqref{LE*} with $C^{1,1}$ Dirichlet boundary data (see Theorem \ref{Dirith} for details).
As a corollary, we immediately get the following result from Theorem \ref{main} (see Corollary \ref{main-Cor}). 
\begin{corollary}\label{intro}
For $n\ge2$, let $u\in C^{1,1}$ satisfy \eqref{LE*} a.e. on $B_1\subset\R^n$ with the Lipschitz phase $\th$ satisfying $|\th|\ge\f{n-2}2\pi$ on $B_1$, then $|D^2u|$ is bounded on $B_{1/2}$ by a constant depending only on $n$, $\sup_{B_1}|Du|$, and the Lipschitz norm of $\th$.
\end{corollary}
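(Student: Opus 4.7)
The strategy is to reduce the $C^{1,1}$ case to the smooth case covered by Theorem \ref{main} via approximation. Since the Lipschitz function $\theta$ avoids the open interval $(-\tfrac{n-2}{2}\pi,\tfrac{n-2}{2}\pi)$ and $B_1$ is connected, $\theta$ has a fixed sign relative to the critical value; replacing $u$ with $-u$ sends $\theta$ to $-\theta$ (since $\arctan$ is odd), so I may assume $\theta\ge\tfrac{n-2}{2}\pi$ throughout $B_1$. Fix $r\in(\tfrac{1}{2},1)$ and mollify: $\theta_\epsilon:=\theta*\rho_\epsilon$ on $\overline{B_r}$. Then $\theta_\epsilon\in C^\infty(\overline{B_r})$ with $\theta_\epsilon\ge\tfrac{n-2}{2}\pi$ and $\mathrm{Lip}(\theta_\epsilon)\le\mathrm{Lip}(\theta)$.

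Because $u\in C^{1,1}(B_1)$, the trace $u|_{\partial B_r}$ is $C^{1,1}$. By the Dirichlet problem for \eqref{LE*} with smooth supercritical phase and $C^{1,1}$ boundary data (Theorem \ref{Dirith} in the smooth-phase case, which is classical: concavity of the operator on the convex level set $\{|\theta|\ge\tfrac{n-2}{2}\pi\}$ combined with the continuity method and the Evans-Krylov-Safonov theory yields a $C^{4,\alpha}$ solution), there exists $u_\epsilon\in C^4(B_r)\cap C^{1,1}(\overline{B_r})$ solving $\mathrm{tr}(\arctan D^2u_\epsilon)=\theta_\epsilon$ in $B_r$ with $u_\epsilon=u$ on $\partial B_r$. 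The maximum principle applied to the uniformly elliptic linear equations satisfied by partial derivatives of $u_\epsilon$, together with the boundary gradient control inherited from $u\in C^{1,1}$, bounds $\sup_{B_r}|Du_\epsilon|$ by a constant depending on $n$, $\sup_{B_1}|Du|$, and $\mathrm{Lip}(\theta)$. A rescaled application of Theorem \ref{main} then yields a uniform bound $|D^2u_\epsilon|\le C$ on $B_{r/2}\supset B_{1/2}$ with $C$ depending only on these quantities.

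It remains to pass $\epsilon\to 0$. The uniform Hessian bound makes $\{u_\epsilon\}$ precompact in $C^{1,\alpha}(\overline{B_{r/2}})$ for every $\alpha\in(0,1)$; any subsequential limit $u_0$ is a $C^{1,1}$ solution of $\mathrm{tr}(\arctan D^2u_0)=\theta$ a.e.\ on $B_r$ with $u_0|_{\partial B_r}=u|_{\partial B_r}$, inheriting the Hessian bound. Uniqueness of $C^{1,1}$ solutions to this Dirichlet problem identifies $u_0$ with $u$ almost everywhere, yielding $|D^2u|\le C$ a.e.\ on $B_{1/2}$. The main obstacle is precisely this uniqueness step: in the strictly supercritical regime it follows from the comparison principle for the concave, uniformly elliptic operator whose level set is strictly convex, but the critical locus $\{\theta=\tfrac{n-2}{2}\pi\}$ presents degeneracy that must be handled by a further vanishing-viscosity perturbation (for instance, replacing $\theta_\epsilon$ by $\theta_\epsilon+\epsilon$ so that the phase becomes strictly supercritical) before taking limits.
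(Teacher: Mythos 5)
Your overall strategy matches the paper's: the paper proves this as Corollary \ref{main-Cor}, obtained from Theorem \ref{Dirith} (whose proof mollifies the phase, solves the Dirichlet problem with the smooth phase and the same boundary data, applies Theorem \ref{main} for uniform interior Hessian bounds, and passes to the limit), followed by uniqueness. Your reduction to $\theta\ge\frac{n-2}{2}\pi$, the mollification $\theta_\ell$ with strictly supercritical perturbation, the appeal to Theorem \ref{main}, and the final identification of the limit with $u$ via uniqueness/Evans--Krylov are all the paper's steps, just spelled out on the ball rather than quoted through Theorem \ref{Dirith}.

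The one step where your argument as written does not go through is the gradient bound $\sup_{B_r}|Du_\epsilon|\le C(n,\sup_{B_1}|Du|,\mathrm{Lip}\,\theta)$. Differentiating the equation gives $\sum_{i,j}g^{ij}\partial_{ij}(\partial_ku_\epsilon)=\partial_k\theta_\epsilon$, a linear uniformly elliptic equation with a nonzero source; the maximum-principle constant for such an equation depends on the ellipticity ratio of $g^{-1}$, which in turn is controlled only by $\|D^2u_\epsilon\|_\infty$ — precisely the quantity you have not yet bounded, so the reasoning is circular. Moreover the boundary data $u_\epsilon|_{\partial B_r}=u|_{\partial B_r}$ only pins the tangential part of $Du_\epsilon$ on $\partial B_r$; the normal derivative needs a separate barrier. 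The paper's proof of Theorem \ref{Dirith} handles exactly this: it sandwiches $u_\epsilon$ between solutions $\overline{u},\underline{u}$ of the constant-phase equation \eqref{SLE} with the same boundary data. These barriers have a boundary gradient estimate from the classical $C^1$-estimate for special Lagrangian equations, and crucially their first derivatives satisfy the source-free minimal surface system $\sum g^{ij}\partial_{ij}(\partial_k\overline{u})=0$, so the maximum principle yields interior gradient bounds for $\overline{u},\underline{u}$ with no dependence on ellipticity ratios. The comparison $\overline{u}\le u_\epsilon\le\underline{u}$ then transfers the gradient control to $u_\epsilon$. You should replace the sentence invoking "the maximum principle applied to the uniformly elliptic linear equations satisfied by partial derivatives of $u_\epsilon$" with this constant-phase barrier comparison (or cite the interior gradient estimate of Bhattacharya–Mooney–Shankar \cite{BMS}).
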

In Remark 1.3 in \cite{Ba1}, 
Bhattacharya pointed out that for each $\a\in(0,1)$, $|x|^{1+\a}$ is a convex solution to \eqref{LE*} with a H\"older supercritical phase, but Hessian of $|x|^{1+\a}$ is unbounded. 
Given any positive continuous function $\phi$ on $(0,2]$ with $\lim_{t\to0}\phi(t)=\infty$,
we can find a family of smooth non-convex solutions $u_\ep$ to \eqref{LE*} with critical and supercritical phases $\th_\ep$, where $\lim_{\ep\to0}|D^2u_\ep|(0^n)=\infty$, and $|D\th_\ep|(x)\le\phi(|x|+\ep)$ for each $x\in B_1$ (see Appendix II for details).
Therefore, in view of the counter-examples of Nadirashvili-Vl$\mathrm{\breve{a}}$dut \cite{NV} and Wang-Yuan \cite{WdY0}, the condition on the phase $\th$ in Corollary \ref{intro} is \textbf{sharp} and cannot be weakened further.

The proof of Theorem \ref{main} is divided into 4 main steps:
\begin{itemize}

  \item[i)] Firstly, we use Allard's regularity theorem to prove that a Lagrangian graph with the smallest Jordan angle close to $-\pi/2$ locally can be written as the graph of a graphic function $Dw$ satisfying $\p^2_{x_nx_n}w\ge0$. Then we establish a Harnack's inequality for $\p^2_{x_nx_n}w$ in $\S4$ using a (modified) De Giorgi-Nash-Moser iteration.

  \item[ii)] Secondly, we show \emph{a priori} Hessian estimate with constraints $\la_{m}/2>\la_{m+1}\ge1$ for some $1\le m\le n-2$ in $\S5$. This is achieved by carefully analyzing the Laplacian of $\prod_{i\le m}(1+\la_i^2)^{-1/2}$ on $G_{Du}$, which contains a selected divergence term. Here, $\la_1\ge\cdots\ge\la_n$ are eigenvalues of $D^2u$.

  \item[iii)] Thirdly, let $L$ be a smooth special Lagrangian manifold in a ball, which is a possible limit of Lagrangian graphs with mean curvature converging to zero and uniformly bounded volume. We show that $L$ is a graph unless $L$ is flat using i) and nonnegative superharmonic functions on the graphic part of $L$ in $\S6$.

  \item[iv)] Lastly, in $\S7$ we derive an Allard-type regularity theorem for the Lagrangian graph of $Du$ by contradiction with the help of iii) and geometric measure theory. Combining this theorem and ii), we can prove that if the maximal eigenvalue of $D^2u$ is close to $\infty$, then the minimal eigenvalue must be close to $-\infty$ (in a certain sense). Finally, we can complete the proof of Theorem \ref{main} by combining with i).

\end{itemize}
For convenience in proof, we introduce a notation as follows.
\begin{definition}
For all constants $\La,\k\ge0$, $r>0$ and each integer $n\ge2$, let $\mathbb{F}_n(\La,\k,r)$ be the set containing each pair $(u,\th)$ so that $u\in C^{4}(B_r)$ satisfies \eqref{LE*} on $B_r\subset\R^n$ with the phase $\th$, where $|Du|\le \k r$ on $B_r$, $\th\ge(n-2)\pi/2$, and the Lipschitz norm $\mathbf{Lip}\,\th\le\La/r$.
Put $\mathbb{F}_n(\La,\k)=\mathbb{F}_n(\La,\k,1)$ for short.
\end{definition}

\textbf{Notional convention.} 
For each integer $k>0$, let $\omega_k$ denote the volume of $k$-dimensional unit Euclidean ball, and $\mathcal{H}^k$ denote the $k$-dimensional Hausdorff measure. Let $0^k$ denote the origin of $\R^k$, and $\mathbf{0}$ denote the origin of $\R^{2n}$.
For each $R>0$ and $\mathbf{p}\in\R^{2n}$, let $\mathbf{B}_R(\mathbf{p})\subset\R^{2n}$ denote the ball of radius $R$ centered at $\mathbf{p}\in\R^{2n}$ (except in $\S$ 2.2). We denote $\mathbf{B}_R=\mathbf{B}_R(\mathbf{0})$ for short. 
When we write an integration on a subset of a Riemannian manifold w.r.t. a volume element, we always omit the volume element if it is associated with the standard
metric of the given manifold.

We will use the symbol $\ell$ to represent the subscript in sequences of functions $u_\ell$ or $\th_\ell$, rather than the derivative of $u$ or $\th$ w.r.t. $x_\ell$. We use $u_i,w_i$ to denote the derivatives of $u,w$  w.r.t. $x_i$ for English letter $i$ (and similar ones for $j,k,l$).
For an open $\Om\subset\R^n$ and a function $u\in C^{2}(\Om)$, we always use $G_{Du}$ to denote a (Lagrangian) graph of $Du$ over $\Om$ in $\R^n\times\R^n$.
For a function $f$ on $G_{Du}$, we will identify $f(x,Du(x))=f(x)$, and regard it as a function $f$ on $\Om$ in context, and vice versa.

\section{Preliminary}
\subsection{Geometry of Lagrangian graphs in Euclidean space}
Let $\Om$ be an open set in $\R^n$ ($n\ge2$), and $u$ be a smooth solution to \eqref{LE*} on $\Om$ for a function $\th\in C^\infty(\Om,(-\f n2\pi,\f n2\pi))$. Let $L$ denote the Lagrangian graph in $\R^n\times\R^n$ with the graphic function $Du$ on $\Om$. 
Let $J$ be the standard complex structure of $\C^n$ so that $JE_i=E_{n+i}$ and $JE_{n+i}=-E_{i}$ for each $i=1,\cdots,n$. 
Here, $\{E_i\}_{1\le i\le 2n}$ denotes a standard orthonormal basis of $\R^n\times\R^n$.
Let $dz=dz_1\wedge\cdots\wedge dz_n$ be the volume form of $\C^n$ with $dz_i=dx_i+\sqrt{-1}dy_i$ for each $i$.

Let $\th_L$ be the phase function of $L$, then the $n$-form $\mathrm{Re}(e^{-\sqrt{-1}\th_L}dz)$ has the comass 1 on $L$ (see III.1 in Harvey-Lawson \cite{HL}, or Chapter 5 in Xin \cite{X} for details). Namely, for any $\mathbf{p}\in L$ there holds
\begin{equation}\aligned\label{PhaseThDEF}
\mathrm{Re}(e^{-\sqrt{-1}\th_L}dz)(\mathbf{v})\le|\mathbf{v}|\qquad \mathrm{at}\ \mathbf{p}\ \mathrm{for\ any}\ n\mathrm{-vector}\ \mathbf{v},
\endaligned
\end{equation}
where the equality occurs if and only if $\mathbf{v}$ represents a Lagrangian plane with the phase $\th_L(\mathbf{p})$ mod $2\pi$. Hence, $\th=\th_L$ mod $2\pi$.
For the constant $\th$, $L$ is minimal, and $L$ is said to be a \emph{minimal (or special) Lagrangian graph} (over $\Om$).
This is also equivalent to that (the current associated with) $L$ is
minimizing in $\Om\times\R^n$ (see Theorem 2.3, Proposition 2.17 in \cite{HL}; or \cite{X}).

Let $(\cdots)^{TL}$ and $(\cdots)^{N}$ denote the projection into the tangent bundle $TL$ and the normal bundle $NL$, respectively. 
Let $\bn$ and $\na^L$ denote the Levi-Civita connections of $\R^n\times\R^n$ and $L$, respectively.
The second fundamental form $A_L$ of $L$ is a bi-linear form defined by 
$$A_L(V_1,V_2)=(\bn_{V_1}V_2)^N=\bn_{V_1}V_2-\na^L_{V_1}V_2$$ 
for any $C^1$-vector fields $V_1,V_2$ along $L$. It's clear that $A_L(V_1,V_2)=A_L(V_2,V_1)$, and
$$\lan A_L(V_1,V_2),\nu\ran=-\lan V_2,\bn_{V_1}\nu\ran$$
for any local normal vector field $\nu$ on $L$.
This means that $A_L(V_1,V_2)(\mathbf{p})$ only depends the values of $V_1(\mathbf{p})$ and $V_2(\mathbf{p})$ at any considered point $\mathbf{p}\in L$. 
In other words, $A_L(W_1,W_2)$ is well-defined for any vector $W_1,W_2$ on $T_\mathbf{p}L$.
The mean curvature $H_L$ of $L$ is defined by
$$H_L=\sum_{i=1}^nA_L(e_i,e_i),$$
where $\{e_i\}$ is an orthonormal basis of $T_\mathbf{p}L$.

Taking the derivative of \eqref{LE*} w.r.t. $x_k$, we get
\begin{equation}\aligned\label{uijkThk}
\sum_{i,j=1}^ng^{ij}u_{ijk}=\th_k\qquad \mathrm{for\ each}\ k=1,\cdots,n.
\endaligned
\end{equation}
Here, $\th_k=\p_{x_k}\th$, $u_{ij}=\p_{x_i}\p_{x_j}u$, $u_{ijk}=\p_{x_i}\p_{x_j}\p_{x_k}u$, and $(g^{ij})$ is the inverse matrix of $(g_{ij})$ with $g_{ij}:=\de_{ij}+\sum_l u_{il}u_{jl}$. 
Let $\De_L$ denote the Laplacian on $L$. Let $\mathbf{x}=(x_1,\cdots,x_{2n})$ denote the position vector in $\R^n\times\R^n$. 
Set $v=\sqrt{\det{g_{ij}}}$. Then
\begin{equation}\aligned
\De_L \mathbf{x}=\f1{v}\sum_{i,j=1}^n\p_{x_i}\left(g^{ij}v\p_{x_j}\mathbf{x}\right)=\sum_{i,j=1}^ng^{ij}\p_{x_ix_j}\mathbf{x}+\f1v\sum_{i,j=1}^n\p_{x_i}\left(g^{ij}v\right)\p_{x_j}\mathbf{x}.
\endaligned
\end{equation}
Since $\p_{x_j}\mathbf{x}$ is a tangent vector field on $L$ (see p. 1096 in Osserman \cite{O} for more details), with \eqref{uijkThk} we have
\begin{equation}\aligned\label{HLDeLx}
H_L=\De_L\mathbf{x}=(\De_L\mathbf{x})^N=\left(\sum_{i,j=1}^ng^{ij}\p_{x_ix_j}\mathbf{x}\right)^N=(0,\cdots,0,\th_1,\cdots,\th_n)^N.
\endaligned
\end{equation}
Since $Y^N=-J((JY)^{TL})$ for every vector $Y\in T_{\mathbf{p}}\R^{2n}$ with $\mathbf{p}\in L$,
from \eqref{HLDeLx} we have
\begin{equation}\aligned\label{HL-JnaTh}
H_L=J\left((\th_1,\cdots,\th_n,0,\cdots,0)^{TL}\right)=J\na^L\th.
\endaligned
\end{equation}

Let $\la_1,\cdots,\la_n$ be the eigenvalues of $D^2u$ on $\Om$ with $\la_1\ge\cdots\ge\la_n$. 
There is an orthonormal matrix-valued function $\g=(\g_1,\cdots,\g_n)$ on $\Om$ so that
\begin{equation}\aligned\label{gg1n}
(D^2u) \g_i=\la_i \g_i\quad\mathrm{with}\ \g_i=(\g_{1i},\cdots,\g_{ni})^T
\endaligned
\end{equation}
on $\Om$. Here, $T$ denotes transpose.
Then 
$$\la_i=D^2u(\g_i,\g_i)=\lan\g_i,(D^2u) \g_i\ran=\sum_{j,k=1}^n\g_{ji}u_{jk}\g_{ki}.$$ 
Noting that $\la_i$ may not be $C^1$, and $\g$ may not be $C^0$.
Let $\th_{\g_i}=D_{\g_i}\th=\lan\g_i,D\th\ran$ and 
$$\th_{\g_i\g_j}=D^2\th(\g_i,\g_j)=\lan\g_i,(D^2\th)\g_j\ran=\sum_{k,l=1}^n\g_{ki}\th_{kl}\g_{lj}.$$
We choose two orthonormal bases
\begin{equation}\aligned\label{einui}
e_i=\f1{\sqrt{1+\la_i^2}}\left(\g_i+\la_i J\g_{i}\right),\qquad \nu_i=\f1{\sqrt{1+\la_i^2}}\left(-\la_i \g_i+J\g_{i}\right)
\endaligned
\end{equation}
for each $i=1,\cdots,n$. 
At any considered point $\mathbf{p}=(p,Du(p))$, by using tangent vector fields $\g_i(p)+\sum_{j=1}^n\lan\g_i(p),(D^2u)\g_j(p)\ran J\g_j(p)$ on $L$,
we get
\begin{equation}\aligned
A_L(e_i,e_j)\big|_{\mathbf{p}}=(1+\la_i^2)^{-\f12}(1+\la_j^2)^{-\f12}\sum_{l=1}^nD^3u(\g_i,\g_j,\g_l)J\g_l\big|_p.
\endaligned
\end{equation}
Let $h_{ijk}=\lan A_L(e_i,e_j),\nu_k\ran$ denote the components of $A_L$ for each $i,j,k=1,\cdots,n$, then
\begin{equation}\aligned\label{hijk}
h_{ijk}=(1+\la_i^2)^{-\f12}(1+\la_j^2)^{-\f12}(1+\la_k^2)^{-\f12}D^3u(\g_i,\g_j,\g_k).
\endaligned
\end{equation}

Let $\si_k(\la_1,\cdots,\la_n)=\sum_{1\le i_1<\cdots<i_k\le n}\la_{i_1}\cdots\la_{i_k}$ denote the $k$-th elementary symmetric polynomial of $\la_1,\cdots,\la_n$. We suppose the phase $\th=\sum_i\arctan\la_i\in[\f{n-2}2\pi,\f n2\pi)$.
From Lemma 2.1 in Wang-Yuan \cite{WdY}, 
\begin{equation}\aligned\label{sik***}
\si_k(\la_1,\cdots,\la_n)\ge0\qquad\qquad \mathrm{for\ each}\ k=1,\cdots,n-1.
\endaligned
\end{equation}
Analog to \eqref{sik***} (with dimension $n$ replaced by $n-1$),
\begin{equation}\aligned\label{sikplai***}
\f{\p}{\p\la_i}\si_k(\la_1,\cdots,\la_n)\ge0\qquad\qquad \mathrm{for\ each}\ k\in\{1,\cdots,n-1\},\ i\in\{1,\cdots,n\}.
\endaligned
\end{equation}
We further suppose 
$\la_m>\la_{m+1}$ on $B_\de(p)$ for some ball $B_\de(p)\subset\Om$ and some integer $1\le m\le n-1$, then the function 
\begin{equation}\aligned\label{vm*}
v_m:=\prod_{i=1}^{m}\sqrt{1+\la_i^2}
\endaligned
\end{equation}
is smooth on $B_\de(p)$.
For $n=2,m=1$, from Lemma 2.1 in Warren-Yuan \cite{WmY3} and Lemma 4.1 in Bhattacharya \cite{Ba1} there holds
\begin{equation}\aligned\label{DeMla_1n2}
\De_L\log v_1=&(1+\la_1^2)h_{111}^2+\f{2\la_1(1+\la_1\la_2)}{\la_1-\la_2}h_{122}^2+\f{3\la_1-\la_2+\la_1^2(\la_1+\la_2)}{\la_1-\la_2}h_{112}^2\\
&+\f{\la_1}{1+\la_1^2}\th_{\g_1\g_1}-\f{\la_1}{1+\la_1^2}\th_{\g_1}D_{\g_1}\log v_1.
\endaligned
\end{equation}
In general, from Lemma 4.1 (as well as (4.7) both) in Bhattacharya \cite{Ba1} (see p. 487 in Wang-Yuan \cite{WdY}  for constant phases), we have the following formula.
\begin{lemma}\label{DeMla_1=}
For $n\ge2$, on $B_\de(p)$ there holds
\begin{equation}\aligned
\De_L\log v_m&=\sum_{k=1}^{m}(1+\la_k^2)h_{kkk}^2+\sum_{\stackrel{i,k=1}{i\neq k}}^{m}(3+\la_i^2+2\la_i\la_k)h_{iik}^2\\
&+\sum_{k\le m<l}\f{2\la_k(1+\la_k\la_l)}{\la_k-\la_l}h_{llk}^2+\sum_{l\le m<k}\f{3\la_l-\la_k+\la_l^2(\la_l+\la_k)}{\la_l-\la_k}h_{llk}^2\\
&+2\sum_{i<j<k<m}\left(3+\la_i\la_j+\la_j\la_k+\la_k\la_i\right)h_{ijk}^2\\
&+2\sum_{i<j\le m<k}\left(1+\la_i\la_j+\la_i\la_k+\la_j\la_k+\la_i\f{1+\la_k^2}{\la_i-\la_k}+\la_j\f{1+\la_k^2}{\la_j-\la_k}\right)h_{ijk}^2\\
&+2\sum_{i\le m<j<k}\la_i\left(\la_j+\la_k+\f{1+\la_j^2}{\la_i-\la_j}+\f{1+\la_k^2}{\la_j-\la_k}\right)h_{ijk}^2\\
&+\sum_{i=1}^{m}\f{\la_i}{1+\la_i^2}\th_{\g_i\g_i}-\sum_{i=1}^{m}\f{\la_i}{1+\la_i^2}\th_{\g_i}D_{\g_i}\log v_m.
\endaligned
\end{equation}
\end{lemma}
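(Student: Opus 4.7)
The plan is to reduce to a pointwise computation at an arbitrary $p_0\in B_\de(p)$ and exploit the standard perturbation calculus for eigenvalues of $D^2u$. Since $\la_m>\la_{m+1}$ on $B_\de(p)$, the top-$m$ eigenspace varies smoothly and $v_m$ is a smooth symmetric function of the restriction of $D^2u$ to that subspace, even if individual $\la_i$ with $i\le m$ coincide. At $p_0$ I would rotate the ambient $x$-coordinates so that $\g_i(p_0)=e_i$; this diagonalises $D^2u(p_0)$ and therefore the induced metric $g_{ij}=\de_{ij}+\sum_l u_{il}u_{jl}$, with $g_{ii}(p_0)=1+\la_i^2$. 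By a small generic perturbation of $D^2u(p_0)$ if needed, one may also assume all the $\la_i$ are distinct at $p_0$, since both sides of the claimed identity are continuous in the entries of $D^2u$ as long as $\la_m>\la_{m+1}$.

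Once at a diagonalising frame, I would use $\partial_{x_k}\la_i=u_{iik}$ together with the Rayleigh-type second-derivative formula $\partial_{x_k}^2\la_i=u_{iikk}+2\sum_{j\neq i}u_{ijk}^2/(\la_i-\la_j)$, and assemble $\De_L\log v_m$ via $\De_Lf=\frac1v\partial_{x_i}(g^{ij}v\partial_{x_j}f)$. The fourth-order quantities $u_{iikk}$ get eliminated by differentiating \eqref{uijkThk} once more in the $\g_k$ direction; this is where the phase-dependent terms $\th_{\g_i\g_i}$ and $\th_{\g_i}D_{\g_i}\log v_m$ enter, in exactly the same way as the constant-phase computation of Wang--Yuan~\cite{WdY}, except that now the divergence relation $\sum_jg^{ij}u_{ijk}=\th_k$ contributes nontrivially. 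I would then replace every $u_{ijk}$ by the normalised second-fundamental-form components $h_{ijk}$ from \eqref{hijk}, which turns every intermediate Jacobi-type expression into a rational coefficient multiplying some $h_{ijk}^2$.

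The main obstacle is the combinatorial bookkeeping as $(i,j,k)$ ranges over $\{1,\dots,n\}^3$: the contribution of a triple depends on whether each of its three indices lies in $\{1,\dots,m\}$ or in $\{m+1,\dots,n\}$, because $v_m$ depends on $\la_i$ only when $i\le m$ and because the second-order perturbation denominators $\la_i-\la_j$ change sign when indices straddle $m$. Splitting into the appropriate combinatorial cases produces the two fully-interior sums $\sum_{k\le m}(1+\la_k^2)h_{kkk}^2$ and $\sum_{i\neq k,\ i,k\le m}(3+\la_i^2+2\la_i\la_k)h_{iik}^2$, the two asymmetric straddling sums on the second displayed line, and the three triple-distinct-index sums (the all-outside regime $m<i<j<k$ contributes nothing since $v_m$ is independent of those eigenvalues). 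The remaining two phase terms $\sum_i\tfrac{\la_i}{1+\la_i^2}\th_{\g_i\g_i}$ and $-\sum_i\tfrac{\la_i}{1+\la_i^2}\th_{\g_i}D_{\g_i}\log v_m$ are then read off from the $D\th$ contributions left over from the differentiation of \eqref{uijkThk}. The algebraic identity that collects everything is the one carried out in Lemma~4.1 of Bhattacharya~\cite{Ba1}; up to these two extra phase terms it reduces line by line to the constant-phase expression of Wang--Yuan~\cite{WdY}.
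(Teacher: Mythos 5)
Your proposal is correct and takes essentially the same route as the paper, which in fact offers no computation of its own here: it simply states that the identity follows from Lemma~4.1 and eq.~(4.7) in Bhattacharya~\cite{Ba1} (with Wang--Yuan~\cite{WdY} for the constant-phase case). Your outline --- diagonalize $D^2u$ at a point, use $\partial_k\la_i=u_{iik}$ and the second-order eigenvalue perturbation formula, eliminate the fourth derivatives via the differentiated equation \eqref{uijkThk}, rewrite in terms of $h_{ijk}$, and split the index triples according to membership in $\{1,\dots,m\}$ versus $\{m+1,\dots,n\}$ --- is exactly the derivation that the cited references carry out, and you appropriately defer the line-by-line algebra to Bhattacharya's Lemma~4.1 just as the paper does.
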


\subsection{Varifolds and currents in geometric measure theory}

Let us recall Almgren's notion of {\it varifolds} from  geometric measure
theory (see Lin-Yang \cite{LYa} and Simon \cite{S} for more details), which is a
generalization of submanifolds.
For a set $S$ in $\R^{N}$ with $N>n$, we call $S$ \emph{countably $n$-rectifiable} if $S\subset S_0\cup\bigcup_{\ell=1}^\infty F_\ell(\R^n)$, where $\mathcal{H}^n(S_0)=0$, and $F_\ell:\, \R^n\rightarrow\R^{N}$ are Lipschitz mappings for all integers $\ell\ge1$.
Suppose $\mathcal{H}^n(S)<\infty$. Let $\vartheta$ be a positive locally $\mathcal{H}^n$ integrable function on $S$.
Let $|S|$ be the varifold associated with the set $S$.
The associated varifold $V=\vartheta|S|$ is called a \emph{rectifiable $n$-varifold}. Here, $\vartheta$ is called the \emph{multiplicity} function of $V$, and the multiplicity of $|S|$ is equal to one on $S$.
If $\vartheta$ is integer-valued, then $V$ is said to be an \emph{integral varifold}. Associated to $V$, there is a Radon measure $\mu_V$ defined by $\mu_V=\mathcal{H}^n\llcorner\vartheta$, namely,
$$\mu_V(W)=\int_{W\cap S}\vartheta(y)d\mathcal{H}^n(y)\qquad \mathrm{for\ each\ open}\ W\subset\R^{N}.$$
For an open set $U\subset\R^{N}$, $V$ is said to have \emph{the generalized mean curvature} $H_V$  in $U$ if
\begin{equation}\aligned\label{divSYHV}
\int \mathrm{div}_S Yd\mu_V=-\int \lan Y,H_V\ran d\mu_V
\endaligned
\end{equation}
for each $Y\in C^\infty_c(U,\R^{N})$. Here, $\mathrm{div}_S Y$ is the divergence of $Y$ restricted on $S$. For $H_V=0$, $V$ is said to be \emph{stationary}.
For $V=|S|$, we also say that $S$ has the generalized mean curvature $H_V$.

For each $R>0$, let $\mathbf{B}_R(\mathbf{p})\subset\R^{N}$ denote the ball of radius $R$ centered at $\mathbf{p}\in\R^{N}$. Denote $\mathbf{B}_R=\mathbf{B}_R(\mathbf{0})$ for short. 
From (the formula) 18.1 in \cite{S} (or Lemma 1.18 in Colding-Minicozzi \cite{CM1}), for $V=|S|$, $\phi\in C^1(\mathbf{B}_r(\mathbf{p}),\R^+)$ and almost all $r\in(0,R)$
\begin{equation}\aligned\label{MVder}
\f{\p}{\p r}\left(r^{-n}\int_{S\cap \mathbf{B}_r(\mathbf{p})}\phi\right)\ge r^{-n-1}\int_{S\cap\mathbf{B}_r(\mathbf{p})}\lan \mathbf{x}-\mathbf{p},\na^S \phi+H_V\phi\ran,
\endaligned
\end{equation}
whenever $\mathbf{B}_R(\mathbf{p})\subset U$. Here, $\na^S$ denotes the derivative restricted on $S$ a.e..
Let $\mathbf{H}=\sup_S|H_V|$.
From \eqref{MVder} with $\phi\equiv1$, there is an 'almost' monotonicity formula:
\begin{equation}\aligned\label{almostmonoto}
e^{\mathbf{H} r}r^{-n}\mathcal{H}^n(S\cap \mathbf{B}_r(\mathbf{p}))\le e^{\mathbf{H} R}R^{-n}\mathcal{H}^n(S\cap \mathbf{B}_R(\mathbf{p}))
\endaligned
\end{equation}
for each $r\in(0,R)$.

Let us recall Allard's regularity theorem (see Allard \cite{A} or 24.2 in \cite{S}). 
\begin{lemma}\label{AllardregThm}
There are constants $\tau_*,\varrho_*\in(0,1/2)$ depending only on $n,N$ such that
if $V$ is an integral $n$-varifold in $\R^N$ with $\mathbf{0}\in\mathrm{spt} V$ and $\mathrm{spt} V\subset\overline{\mathbf{B}}_r$ for some $r>0$ such that its generalized mean curvature $H_V$ in $\mathbf{B}_r$ satisfies $|H_V|\le\tau_*/r$ a.e., and the Radon measure $\mu_V$ associated to $V$ satisfies
\begin{equation}\aligned\label{DEFepn}
\mu_V(\mathbf{B}_r)\le(1+\tau_*)\omega_nr^n,
\endaligned
\end{equation}
then there is a function $f=(f_1,\cdots,f_{N-n})$ on a ball $\mathbf{B}_{\varrho_*r}\cap P$ for some $n$-subspace $P\subset\R^N$ so that $\{(x,f(x)):\, x\in \mathbf{B}_{\varrho_*r}\cap P\}\subset\mathrm{spt}V$ and
\begin{equation}\aligned\label{DEFepn*}
\sup_{\mathbf{B}_{\varrho_*r}\cap P}|Df|+r^{\a}\sup_{x,y\in \mathbf{B}_{\varrho_*r}\cap P,x\neq y}|x-y|^{-\a}|Df(x)-Df(y)|\le c(\a,n,N)
\endaligned
\end{equation}
for each $\a\in(0,1)$, where $c(\a,n,N)$ is a constant depending only on $\a,n,N$.
\end{lemma}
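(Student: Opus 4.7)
The plan is to follow the scheme originally introduced by Allard: combine the almost-monotonicity formula \eqref{almostmonoto} with a tilt-excess decay obtained via harmonic approximation, then iterate this decay through a Campanato-type criterion to extract a $C^{1,\alpha}$ graph. After rescaling I may assume $r = 1$.

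The first step is to upgrade the hypothesis $\mu_V(\mathbf{B}_1) \le (1+\tau_*)\omega_n$ into quantitative density information on every scale. Integrality of $V$ forces the lower bound $\Theta(\mu_V,\mathbf{p}) \ge 1$ at each $\mathbf{p} \in \mathrm{spt} V$, while \eqref{almostmonoto} propagates the small upper density at the origin to every $\mathbf{p} \in \mathrm{spt} V \cap \mathbf{B}_{1/2}$ and $\rho \in (0, 1/2)$. These matched bounds squeeze the support, in Hausdorff distance, close to some $n$-plane. Setting
\begin{equation*}
E(\mathbf{p}, \rho, P) := \rho^{-n} \int_{\mathbf{B}_\rho(\mathbf{p})} \|T_x V - \pi_P\|^{2} \, d\mu_V(x),
\end{equation*}
and choosing $P_0$ to minimize $E(\mathbf{0}, 1, \cdot)$, a standard Lipschitz approximation (based on a maximal-function estimate for the tilt density, and on the density bounds above) produces a Lipschitz map $f : \mathbf{B}_{1/2} \cap P_0 \to P_0^{\perp}$ whose graph coincides with $\mathrm{spt} V$ outside a set of small projected measure, with $\mathrm{Lip}(f)$ and the $L^{2}$-deviation of the graph both bounded in terms of $E(\mathbf{0}, 1, P_0) + \tau_*$.

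The main obstacle I expect is the harmonic approximation step. Here one tests the first-variation identity \eqref{divSYHV} against vector fields $\zeta(x) e_j$ and rewrites it, on the Lipschitz graph, as an approximate Laplace equation for $f$, modulo errors quadratic in $Df$ and linear in $H_V$. A compactness-and-contradiction argument (blow-up limits of would-be counterexamples are stationary integral varifolds of multiplicity one, hence graphs of harmonic functions by Liouville-type rigidity) shows that $f$ is $L^{2}$-close to some affine harmonic map $h$ with error controlled by the excess plus the mean curvature bound. Combining this with standard interior gradient estimates for $h$ yields the tilt-excess decay
\begin{equation*}
\inf_{P'} E(\mathbf{0}, \theta, P') \le C \theta^{2} \, E(\mathbf{0}, 1, P_0) + C \theta^{-n-2} \tau_*^{2},
\end{equation*}
valid for a universal $\theta \in (0, 1/4)$.

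To conclude, I fix $\alpha \in (0,1)$, choose $\theta$ so that $C\theta^{2} < \theta^{2\alpha}$, and shrink $\tau_*$ accordingly, so that the previous two steps can be re-run around every point $\mathbf{p} \in \mathrm{spt} V \cap \mathbf{B}_{\varrho_*}$ and on every dyadic subball. Iterating the decay yields $E(\mathbf{p}, \rho, P_\mathbf{p}) \le C \rho^{2\alpha}$ with a plane $P_\mathbf{p}$ depending Hölder-continuously on $\mathbf{p}$. A Campanato-type criterion then glues the local Lipschitz pieces into a single $C^{1,\alpha}$ graph $f = (f_1, \ldots, f_{N-n})$ over $\mathbf{B}_{\varrho_*} \cap P_0$, with the quantitative bound \eqref{DEFepn*}. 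The constants $\tau_*$ and $\varrho_*$ produced by this procedure depend only on $n$ and $N$, as claimed.
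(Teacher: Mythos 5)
The paper does not actually reprove this lemma: it cites Allard's theorem directly (Allard \cite{A}, or 24.2 in \cite{S}) and then, in the Remark immediately following, notes that one should first apply Allard for a single fixed exponent (say $\a=\tfrac12$) on the larger ball $\mathbf{B}_{2\varrho_* r}\cap P$, obtaining $\tau_*,\varrho_*$ depending only on $n,N$, and then use interior Schauder estimates for the elliptic system solved by the graph $f$ (with right-hand side controlled by the $L^\infty$ bound $|H_V|\le\tau_*/r$) to upgrade to every $\a\in(0,1)$ on $\mathbf{B}_{\varrho_* r}\cap P$, keeping $\tau_*,\varrho_*$ unchanged. Your proposal instead sets out to reconstruct Allard's proof from scratch, which is a much longer route than the paper requires.

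There is, moreover, a genuine gap in the proposal as written. The lemma asserts that $\tau_*,\varrho_*\in(0,1/2)$ depend only on $n,N$, while \eqref{DEFepn*} is required to hold for \emph{every} $\a\in(0,1)$ with only $c(\a,n,N)$ allowed to depend on $\a$. In your concluding iteration you ``fix $\a\in(0,1)$, choose $\theta$ so that $C\theta^2<\theta^{2\a}$, and shrink $\tau_*$ accordingly.'' This makes $\tau_*$ — and, through the region on which the Campanato iteration closes, also $\varrho_*$ — depend on $\a$, and these thresholds degenerate as $\a\uparrow1$; the closing sentence of the proposal, claiming the constants depend only on $n,N$, therefore contradicts the construction that precedes it. The correct repair is exactly the Schauder bootstrap recorded in the paper's Remark: run the excess decay once with a fixed exponent $\a_0$ (e.g.\ $\a_0=\tfrac12$) to produce $\tau_*,\varrho_*$ and a $C^{1,\a_0}$ graph on a slightly larger ball, then observe that $f$ solves a uniformly elliptic quasilinear system with bounded inhomogeneity, so interior Schauder estimates give $f\in C^{1,\a}$ on $\mathbf{B}_{\varrho_* r}\cap P$ for every $\a\in(0,1)$ with a constant $c(\a,n,N)$. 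Without this step, your scheme proves a statement in which $\tau_*,\varrho_*$ depend on $\a$, which is weaker than the lemma and insufficient for the paper's later use of it (where $\tau_*,\varrho_*$ are fixed once and for all).
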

\begin{remark}
From Allard's regularity theorem, we first choose $\tau_*,\varrho_*$ depending only on $n,N$ so that \eqref{DEFepn*} holds with $\varrho_*$ replaced by $2\varrho_*$ for $\a=\f12$. Then using the standard Schauder estimates of elliptic equations, we can get \eqref{DEFepn*} for each $\a\in(0,1)$.
\end{remark}


Let $\mathcal{D}^n(U)$ denote the set including all smooth $n$-forms on $U$ with compact supports in $U$. Denote $\mathcal{D}_n(U)$ be the set of $n$-currents in $U$, which are continuous linear functionals
on $\mathcal{D}^n(U)$. For each $T\in \mathcal{D}_n(U)$ and each open set $W$ in $U$, one defines the mass of $T$ on $W$ by
\begin{equation*}\aligned
\mathbf{M}(T\llcorner W)=\sup_{|\omega|_U\le1,\omega\in\mathcal{D}^n(U),\mathrm{spt}\omega\subset W}T(\omega)
\endaligned
\end{equation*}
with $|\omega|_U=\sup_{x\in U}\lan\omega(x),\omega(x)\ran^{1/2}$.
Let $\p T$ be the boundary of $T$ defined by $\p T(\omega')=T(d\omega')$ for any $\omega'\in\mathcal{D}^{n-1}(U)$.
$T\in\mathcal{D}_n(U)$ is said to be an \emph{integer multiplicity current} if it can be expressed as
$$T(\omega)=\int_S\vartheta\lan \omega,\vec{\tau}\ran,\qquad \mathrm{for\ each}\ \omega\in \mathcal{D}^n(U),$$
where $S$ is a countably $n$-rectifiable subset of $U$, $\vartheta$ is a locally $\mathcal{H}^n$-integrable positive integer-valued function, and $\vec{\tau}$ is an orientation on $S$, i.e.,  $\vec{\tau}(x)$ is an $n$-vector representing the approximate tangent space $T_xS$ for $\mathcal{H}^n$-a.e. $x$.
We call $[|S|]$ the current associated with $S$, and write $T=\vartheta[|S|]$ with the multiplicity $\vartheta$.
Let $f:\ U\rightarrow \R^N$ be a $C^1$-mapping, and $f_*\vec{\tau}$ denote the push-forward of $\vec{\tau}$, which is an orientation of $f(S)$ in $\R^N$. We define $f_\sharp(T)\in \mathcal{D}_n(\R^N)$ by letting
\begin{equation}\label{PushforwfT}
f_\sharp(T)(\omega)=\int_S\vartheta\lan \omega\circ f,f_*\vec{\tau}\ran
\end{equation}
for each $\omega\in \mathcal{D}^n(\R^N)$.
It's clear that $f_\sharp(T)$ is an integer multiplicity current in $\R^N$.

If both $T$ and $\p T$ are integer multiplicity rectifiable currents, then $T$ is called an \emph{integral current}.
Federer and Fleming \cite{FF} proved a compactness theorem (or referred to as a closure theorem):
a sequence of integral currents $\{T_j\}\subset\mathcal{D}_n(U)$ with $\mathbf{M}(T_j)$ and $\mathbf{M}(\p T_j)$ uniformly
bounded admits a subsequence that converges weakly to an integral current.

The integer multiplicity $n$-current $T$ is said to be \emph{minimizing} in $U$ if
 $\mathbf{M}(T\llcorner W)\le \mathbf{M}(T'\llcorner W)$ whenever $W\subset\subset U$, $\p T = \p T'$ in $U$, spt$(T-T')$ is compact in $W$ (see p. 208 in \cite{LYa} or $\S$ 33 in \cite{S} for instance). 
A point $\mathbf{p}\in \mathrm{spt}T\setminus\mathrm{spt}(\p T)$ is an interior regular point 
if there is a constant $r_{\mathbf{p}}>0$, a smooth embedded submanifold $\G\subset\R^N$ and an integer $k_{\mathbf{p}}>0$ such that $T\llcorner\mathbf{B}_{r_{\mathbf{p}}}(\mathbf{p})=k_{\mathbf{p}}[|\G|]$. The set of interior regular points, which is relatively open in $\mathrm{spt}T\setminus\mathrm{spt}(\p T)$, is called \emph{the regular part}, denoted by $\mathcal{R}(T)$. Its complement $\mathrm{spt}(T)\setminus(\mathrm{spt}(\p T)\cup\mathcal{R}(T))$, the interior
 singular set of $T$, is denoted by $\mathcal{S}(T)$.
From Almgren \cite{Am} or De Lellis-Spadaro \cite{ds1,ds2,ds3}, the (Hausdorff) dimension of $\mathcal{S}(T)\le n-2$ for the minimizing current $T$ in $U$.

\section{Integral estimates for Lagrangian graphs}

In this section, we will establish two integral estimates for Lagrangian graphs mainly based on Wang-Yuan's results in \cite{WdY}.
Let $n\ge2$, and $(u,\th)\in\mathbb{F}_n(\La,\k,r)$ for some $\La\ge0$, $\k\ge1$ and $r>0$.
Let $\de=\inf_{B_{r/2}}\th-\f{n-2}2\pi$. For $\de>0$, let us consider Lewy-Yuan rotation in \cite{Y0} as follows.
Let $\bar{x},\bar{y}:\ B_{r}\to\R^n$ be smooth mapping defined by
\begin{equation}\label{barxbaryx}
\left\{\begin{split}
\bar{x}(x)=&\f1{\sqrt{\si^2+1}}(\si x+Du(x))\\
\bar{y}(x)=&\f1{\sqrt{\si^2+1}}(-x+\si Du(x))
\end{split}\right.\qquad\mathrm{for\ each}\ x\in B_r,
\end{equation}
where $\si:=\cot\f{\de}{n}$. 
(For $\de=0$, we do not need  Lewy-Yuan rotation here, i.e., $\si=\infty$.)
Then
\begin{equation}\aligned\label{LowD2u}
D^2u>-\cot\left(\th-(n-2)\f{\pi}2\right)\ge-\cot\de>-\f1n\cot\f{\de}n\ge-\f{\si}n,
\endaligned
\end{equation}
and the Jacobi of the mapping $\bar{x}$:
\begin{equation}\aligned\label{DEFJ}
J_u=\left(\f{\p\bar{x}_i}{\p x_j}\right)=\f1{\sqrt{\si^2+1}}(\si I+D^2u(x))
\endaligned
\end{equation}
is reversible.
So there is a function $\bar{u}$ on $\bar{x}(B_r)$ such that (see \eqref{bJJb-1App} in Appendix I for instance)
\begin{equation*}
D\bar{u}\big|_{\bar{x}(x)}=\bar{y}(x)=\f1{\sqrt{\si^2+1}}(-x+\si Du(x)).
\end{equation*}

\begin{lemma}\label{VolGDu}
For $n\ge2$, there is a constant $c_n>0$ depending only on $n$ such that 
\begin{equation}\aligned\label{VolGD2u}
\int_{B_{r/2}}\sqrt{\mathrm{det}(I_n+(D^2\bar{u})^2)}\le c_n\left(1+\k\sup_{B_{r}}|D\th|\right)\k^nr^n.
\endaligned
\end{equation}
\end{lemma}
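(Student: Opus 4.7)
The plan is to identify the left-hand side as the $n$-dimensional Hausdorff measure of the (rotated) Lagrangian graph and bound it via an integration-by-parts argument on elementary symmetric polynomials of $D^2\bar u$. Since rotations in $\R^{2n}$ preserve $n$-volume,
\[
\int_{B_{r/2}}\sqrt{\det(I_n+(D^2\bar u)^2)}=\mathcal{H}^n\bigl(G_{D\bar u}|_{\bar x(B_{r/2})}\bigr)=\mathcal{H}^n\bigl(G_{Du}|_{B_{r/2}}\bigr),
\]
so the task is to bound the $n$-volume of the original Lagrangian graph over $B_{r/2}$.

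First I would expand the integrand algebraically: $\sqrt{\det(I_n+(D^2\bar u)^2)}=|\det(I+iD^2\bar u)|\le\sum_{k=0}^n|\si_k(\bar\la)|$, where $\si_k$ is the $k$-th elementary symmetric polynomial of the eigenvalues $\bar\la_i$ of $D^2\bar u$. After the Lewy-Yuan rotation, $\bar\th=\th-\de\ge\frac{n-2}{2}\pi$ on $\bar x(B_{r/2})$, so \eqref{sik***} applied to $\bar\la$ yields $\si_k(\bar\la)\ge0$ for $0\le k\le n-1$. Consequently the integrand is dominated by $\sum_{k=0}^{n-1}\si_k(\bar\la)+|\det D^2\bar u|$.

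Next, for each $1\le k\le n-1$, I would use that $\si_k(D^2\bar u)$ is a null Lagrangian, expressible via its Newton tensor as $k\si_k=\sum_{i,j}\partial_i(T^{(k)}_{ij}\bar u_j)$, so its volume integral becomes a boundary integral over $\partial\bar x(B_{r/2})$. Since Newton tensors of order $\ge1$ are not a priori pointwise bounded, I would apply a Fubini/mean-value trick to pick a favorable radius $r^*\in(r/2,r)$ whose $\mathcal{H}^{n-1}$-integrands are dominated by volume averages over $B_r\setminus B_{r/2}$. Iterating reduces $\si_k$-integrals to $\si_{k-1}$-integrals, eventually bottoming out at $\si_1=\Delta\bar u$ (bounded by $|D\bar u|\le C\k r$ on the boundary) and $\si_0=1$ (bounded by the domain volume $\le cr^n$). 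The factor $\k\sup_{B_r}|D\th|$ enters through \eqref{uijkThk}: any time the PDE is differentiated during the integration-by-parts steps, $D\th$-terms appear, which under $|D\th|\le\La/r$ and $|Du|\le\k r$ contribute $O(\k\La\cdot\k^n r^{n-1})$.

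For the remaining $k=n$ term, $|\si_n(\bar\la)|=|\det D^2\bar u|$ would be controlled by the area formula applied to the gradient map $D\bar u:\bar x(B_{r/2})\to\overline{B}_{C\k r}$: since $|D\bar u|=|\bar y|\le C\k r$, the image lies in a ball of radius $C\k r$, giving $\int|\det D^2\bar u|\,d\bar x\le c_n\k^nr^n$ (with multiplicity). The Lewy-Yuan one-sided bound $D^2\bar u>-\si/n$ from \eqref{LowD2u} applied to $\bar u$ provides the semi-convexity needed to handle the multiplicity cleanly. Summing all contributions gives the desired estimate $c_n(1+\k\sup|D\th|)\k^nr^n$.

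The hard part is the iterative boundary control for $\si_k$ with $k\ge2$: without any a priori bound on $|D^2\bar u|$, the Newton-tensor boundary integrals threaten a circular dependence. It is precisely the critical/supercritical assumption (after Lewy-Yuan) that closes the loop — the sign $\si_j\ge0$ for $j\le n-1$ keeps every intermediate integrand in $L^1$ with a quantitative bound, and the mean-value trick on the radius then lets the induction on $k$ terminate.
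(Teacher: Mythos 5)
Your overall framework (Lewy--Yuan rotation, $\si_k(\bar\la)\ge 0$ for $k\le n-1$, divergence structure of $\si_k$) matches the paper's, but the decomposition $\bar v\le\sum_{k=0}^n|\si_k(\bar\la)|$ sets you on a route the paper deliberately avoids, and the route has a genuine gap at the $k=n$ term. Your plan to bound $\int|\det D^2\bar u|$ via the area formula requires a multiplicity bound for the map $D\bar u$, and you invoke semi-convexity for this. But semi-convexity of $\bar u$ is exactly what the Lewy--Yuan rotation sacrifices: the rotation is chosen so that $\inf\bar\th=(n-2)\pi/2$, at which point the lower bound on $D^2\bar u$ degenerates. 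Concretely, $\arctan\bar\la_n=\arctan\la_n-\de/n>-\pi/2+\de(n-1)/n$, so $\bar\la_n>-\cot(\de(n-1)/n)$, a bound that blows up as $\de\to 0$ and therefore cannot feed into a constant $c_n$ depending only on $n$. (And \eqref{LowD2u} is a bound on $D^2u$, not $D^2\bar u$; you cannot simply "apply it to $\bar u$".) Moreover, even with a two-sided bound on $D^2\bar u$, a lower bound alone does not give injectivity or bounded multiplicity of $D\bar u$, so the area-formula step would not close.

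The paper circumvents $\si_n$ entirely through two ideas you did not use. First, the algebraic identity \eqref{vlai} expresses $\sum_i\bar v/(1+\mu_i^2)$ as a combination of $\bar\si_k$ for $k\le n-1$ only, with weights $\sin\bar\th$, $\cos\bar\th$; the potential $\bar\si_n$ term always carries a vanishing coefficient $(n-2k)$ or $(n-2k+1)$. Applying the divergence structure (with Lipschitz cutoffs, not boundary slicing) plus the observation that, after the rotation, $|\sin\bar\th|\le\mathrm{osc}\,\th$ (even $n$) or $|\cos\bar\th|\le\mathrm{osc}\,\th$ (odd $n$), controls this quantity and is where the $\k\sup|D\th|$ factor actually enters — not from differentiating the PDE in the $\si_k$ iteration. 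Second, the paper upgrades from $\sum_i\bar v/(1+\mu_i^2)$ to $\bar v$ by testing the minimal surface system $\De_\Si\bar u_i=\sum_j\bar g^{ij}\p_j\bar\th$ against $\bar u_i\xi^2$ to bound $\int\sum_i|\na^\Si\bar u_i|^2\xi^2$, which controls $\int\sum_i\mu_i^2\bar v/(1+\mu_i^2)$; adding the two gives $n\int\bar v$. Without both of these ingredients your argument stalls at $\int|\si_n|$, and your "$\La$ enters when the PDE is differentiated in the integration-by-parts steps" diagnosis misidentifies where the Lipschitz constant of $\th$ actually enters the estimate.
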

\begin{proof}
We can only consider the case $r=12$ by scaling.
We first assume $\de<\pi/2$.  
From \eqref{D2barubebxx} in Appendix I, it follows that
\begin{equation}\aligned
\arctan(D^2\bar{u})=\arctan D^2u-n\arctan\si^{-1}=\arctan D^2u-\de.
\endaligned
\end{equation}
Let $\bar{\th}=\arctan(D^2\bar{u})$, then $\bar{\th}(\bar{x}(x))=\th(x)-\de$, and $\inf_{\bar{x}(B_6)}\bar{\th}=\f{n-2}2\pi$ by the definition of $\de$. 
Noting $\si=\cot\f{\de}{n}\ge1$. From \eqref{LowD2u} and \eqref{DEFJ}, we get
\begin{equation}\aligned\label{Julowb}
J_u\ge\f1{\sqrt{\si^2+1}}\left(\si-\f{\si}n\right)I\ge\f{n-1}{\sqrt{2}n}I\ge\f13I.
\endaligned
\end{equation}
From $D\bar{\th}\big|_{\bar{x}(x)}=J_u^{-1}(x)D\th(x)$, we get
\begin{equation}\aligned\label{DTh}
\sup_{\bar{x}(B_6)}|D\bar{\th}|\le\sup_{B_6}\left(\overline{\la}(J_u^{-1})|D\th|\right)\le3\sup_{B_6}|D\th|.
\endaligned
\end{equation}
Here, $\overline{\la}(J_u^{-1})$ denotes the maximal eigenvalue of $J_u^{-1}$.
Moreover, \eqref{Julowb} implies $B_2\subset\bar{x}(B_{6}).$

Let $\e$ be a Lipschitz function satisfying $\e=1$ on $B_1$, $\e=0$ outside $B_{1+1/n}$ and $|D\e|\le n$.
Let $\mu_1,\cdots,\mu_n$ be eigenvalues of $D^2\bar{u}$, and $\bar{\si}_k$ denote the $k$-th elementary symmetric polynomial of $D^2\bar{u}$ so that $\bar{\si}_k=\bar{\si}_k(\mu_1,\cdots,\mu_n)$ with $\bar{\si}_0=1$. 
Recalling a divergence formula (see p. 495 in \cite{WdY}):
\begin{equation}\aligned\label{divStruc}
\int_\Om\sum_{i=1}^n\f{\p\bar{\si}_{k}}{\p \bar{u}_{ij}}\f{\p\varphi}{\p x_i}=0\qquad \mathrm{for\ each}\ j=1,\cdots,n,
\endaligned
\end{equation}
where $\varphi$ is a Lipschitz function on $\Om$ with compact support in $\Om$.
Combining with \eqref{sik***} and \eqref{divStruc}, we have
\begin{equation}\aligned\label{n-1bsisinbth}
&(n-1)\int_{B_1}\bar{\si}_{n-1}\sin\bar{\th}\le(n-1)\int_{B_2}\e\bar{\si}_{n-1}\sin\bar{\th}
=\int_{B_2}\e\sum_{i,j=1}^n\f{\p\bar{\si}_{n-1}}{\p \bar{u}_{ij}}\f{\p^2\bar{u}}{\p x_i\p x_j}\sin\bar{\th}\\
=&-\int_{B_2}\sin\bar{\th}\sum_{i,j=1}^n\f{\p\e}{\p x_i}\f{\p\bar{\si}_{n-1}}{\p \bar{u}_{ij}}\f{\p \bar{u}}{\p x_j}-\int_{B_2}\e\cos\bar{\th}\sum_{i,j=1}^n\f{\p\bar{\th}}{\p x_i}\f{\p\bar{\si}_{n-1}}{\p \bar{u}_{ij}}\f{\p \bar{u}}{\p x_j}.
\endaligned
\end{equation}
From
\begin{equation}\aligned
\sup_{\bar{x}(B_6)}\left|\sin\bar{\th}\right|\le\mathrm{osc}_{_{B_6}}\th\le6\sup_{B_6}|D\th|\qquad\mathrm{for\ even}\ n,
\endaligned
\end{equation}
 \eqref{sikplai***} and \eqref{DTh}, \eqref{n-1bsisinbth} gives
\begin{equation}\aligned\label{sinhatTh}
(n-1)\left|\int_{B_1}\bar{\si}_{n-1}\sin\bar{\th}\right|\le c_n\k\sup_{B_6}|D\th|\int_{B_{1+1/n}}\bar{\si}_{n-2}.
\endaligned
\end{equation}
Here, $c_n>0$ denotes a general constant depending only on $n$, which may change from line to line.
Similarly, for odd $n$
 \begin{equation}\aligned\label{coshatTh}
(n-1)\left|\int_{B_1}\bar{\si}_{n-1}\cos\bar{\th}\right|\le c_n\k\sup_{B_6}|D\th|\int_{B_{1+1/n}}\bar{\si}_{n-2}.
\endaligned
\end{equation}

On the other hand, using \eqref{divStruc} we clearly have
\begin{equation}\aligned\label{sin-1n-2}
(n-1)\left|\int_{B_1}\bar{\si}_{n-1}\right|\le c_n\k\int_{B_{1+1/n}}\bar{\si}_{n-2}.
\endaligned
\end{equation}
By induction, we get
\begin{equation}\aligned\label{sin-2induction}
(n-2)\int_{B_{1+1/n}}\bar{\si}_{n-2}\le c_n(n-3)\k\int_{B_{1+2/n}}\bar{\si}_{n-3}\le\cdots\le c_n^{n-1}\k^{n-2}.
\endaligned
\end{equation}
Let 
$$\bar{v}=\sqrt{\det\left(I+(D^2\bar{u})^2\right)}=\prod_{i=1}^n\sqrt{1+\mu_i^2}.$$ 
From (3.2) in \cite{WdY}, we have
\begin{equation}\aligned\label{vlai}
\sum_{i=1}^n\f{\bar{v}}{1+\mu_i^2}=\cos\bar{\th}\sum_{1\le2k+1\le n}(-1)^k(n-2k)\bar{\si}_{2k}-\sin\bar{\th}\sum_{1\le2k\le n}(-1)^k(n-2k+1)\bar{\si}_{2k-1}.
\endaligned
\end{equation}
Substituting \eqref{sinhatTh}\eqref{coshatTh}\eqref{sin-2induction} into \eqref{vlai}, it follows that
\begin{equation}\aligned\label{Estv}
\int_{B_1}\sum_{i=1}^n\f{\bar{v}}{1+\mu_i^2}\le c_n\left(1+\k\sup_{B_6}|D\th|\right)\k^{n-2}.
\endaligned
\end{equation}
Moreover, with \eqref{sin-1n-2}\eqref{sin-2induction} we have
\begin{equation}\aligned\label{Estv*}
\int_{B_1}\sum_{i=1}^n\f{\bar{v}}{1+\mu_i^2}\le c_n\k^{n-1}.
\endaligned
\end{equation}

Let $\Si$ denote the graph of $D\bar{u}$ over $\bar{x}(B_{6})\supset B_2$ with metric $(\bar{g}_{ij})=I_n+(D^2\bar{u})^2$. Let $(\bar{g}^{ij})$ be the inverse of $(\bar{g}_{ij})$.
Let $\xi$ be a Lipschitz function satisfying $\xi=1$ on $B_{1/2}$, $\xi=0$ outside $B_{1}$ and $|D\xi|\le 2$.
We also regard $\xi$ as a function on $\Si$ by identifying $\xi(x)=\xi(x,D\bar{u}(x))$.
Integrating by parts in conjuction with Cauchy-Schwartz inequality gives
\begin{equation}\aligned
-\int_\Si\bar{u}_i\xi^2\De_\Si\bar{u}_i=&\int_\Si|\na^\Si\bar{u}_i|^2\xi^2+2\int_\Si\bar{u}_i\xi\lan\na^\Si\bar{u}_i,\na^\Si\xi\ran \\
\ge&\f12\int_\Si|\na^\Si\bar{u}_i|^2\xi^2-2\int_\Si\bar{u}_i^2|\na^\Si\xi|^2.
\endaligned
\end{equation}
From \eqref{HL-JnaTh},
\begin{equation}\aligned\label{DeSibui}
\De_\Si\bar{u}_i=\lan H_\Si,E_{n+i}\ran=\lan J\na^\Si\bar{\th},E_{n+i}\ran=\lan \na^\Si\bar{\th},E_i\ran=\sum_{j=1}^n\bar{g}^{ij}\f{\p\bar{\th}}{\p x_j}
\endaligned
\end{equation}
for each $i$, we get
\begin{equation}\aligned\label{bnbui2xi2*}
\int_\Si\sum_{i=1}^n|\na^\Si\bar{u}_i|^2\xi^2\le&4\int_\Si\sum_{i=1}^n\bar{u}_i^2|\na^\Si\xi|^2 -2\int_\Si\xi^2\sum_{i,j=1}^n\bar{g}^{ij}\f{\p\bar{\th}}{\p x_j}\bar{u}_i\\
\le& c_n\k^2\int_{B_1}\sum_{i=1}^n\f{\bar{v}}{1+\mu_i^2}+c_n\k\sup_{B_6}|D\th|\int_{B_1}\sum_{i=1}^n\f{\bar{v}}{1+\mu_i^2}.
\endaligned
\end{equation}
Combining with \eqref{Estv}\eqref{Estv*}, we get
\begin{equation}\aligned
\int_{B_{1/2}}\sum_{i=1}^n\f{\mu_i^2}{1+\mu_i^2}\bar{v}\le c_n\left(1+\k\sup_{B_6}|D\th|\right)\k^n,
\endaligned
\end{equation}
and then
\begin{equation}\aligned\label{barvupb}
n\int_{B_{1/2}}\bar{v}\le\int_{B_{1/2}}\sum_{i=1}^n\f{\mu_i^2}{1+\mu_i^2}\bar{v}+\int_{B_1}\sum_{i=1}^n\f{\bar{v}}{1+\mu_i^2}\le c_n\left(1+\k\sup_{B_6}|D\th|\right)\k^n.
\endaligned
\end{equation}
If $\de\ge\pi/2$, then $u$ is convex, and $-\tan\f{\pi}{2n}\le D^2\bar{u}\le\cot\f{\pi}{2n}$ from \eqref{D2barubetan} in Appendix I, which also gives \eqref{barvupb}. 
By considering a suitable finite covering $\{B_{1/2}(p_i)\}$ of $B_6$, we complete the proof.
\end{proof}
\begin{remark}
The order $n$ of $\k^n$ in \eqref{VolGD2u} is sharp by the example $u(x)=\f{\k}2\sum_{i=1}^nx_i^2$ with $\mathrm{tr}(\arctan D^2u)=n\arctan\k$.
\end{remark}

From Lemma \ref{VolGDu}, we immediately have
\begin{equation}\aligned\label{VolGDur}
\mathcal{H}^n(G_{Du}\cap\mathbf{B}_{r/2})=\mathcal{H}^n(\Si\cap\mathbf{B}_{r/2})\le c_n\left(1+\k\sup_{B_{r}}|D\th|\right)\k^nr^n.
\endaligned
\end{equation}
Let $\phi$ be a nonnegative function in the Sobolev space $W^{1,2}(\overline{B}_2)$.
We have the following integral estimate.
\begin{lemma}\label{Intfv}
Let $(u,\th)\in\mathbb{F}_n(\La,\k,r)$ for some $\La,\k\ge0$ and $v=\sqrt{\det\left(I_n+(D^2u)^2\right)}$.
There is a constant $c_{n,\k,\La}>0$ depending only on $n,\k$ and $\La$ such that 
\begin{equation}\aligned
\int_{B_{r/2}}\phi v\le c_{n,\k,\La}\left(\f1\ep\int_{B_{r}}v+\int_{B_{r}}\phi+\f{\ep}{r^2}\int_{B_{r}}|\na^\Si \phi|^2v\right)\qquad\mathrm{for\ each}\ \ep>0.
\endaligned
\end{equation}
\end{lemma}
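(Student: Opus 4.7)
The plan is to adapt the proof of Lemma \ref{VolGDu} by inserting the weight $\phi$ into the test functions used in each integration-by-parts argument. After rescaling to $r=1$, I apply the Lewy-Yuan rotation \eqref{barxbaryx} exactly as in Lemma \ref{VolGDu} to obtain $\bar u$ with phase $\bar\th \ge (n-2)\pi/2$ attaining its infimum $(n-2)\pi/2$ on $\bar x(B_6) \supset B_2$, so that the sign conditions \eqref{sik***}--\eqref{sikplai***} and the oscillation bound $|\sin\bar\th| \le C\La$ (for even $n$) or $|\cos\bar\th| \le C\La$ (for odd $n$) are at our disposal. Since the rotation is a unitary transformation of $\R^{2n}$, the area element and the intrinsic gradient $|\na^\Si \phi|$ on $\Si := G_{D\bar u}$ match those on $G_{Du}$, and by the covering argument at the end of Lemma \ref{VolGDu} it suffices to prove the inequality in a small rotated ball.

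Starting from the pointwise identity
\begin{equation*}
n\bar v = \bar v |\na^\Si D\bar u|^2 + \sum_{i=1}^n \f{\bar v}{1+\mu_i^2}
\end{equation*}
and the identity \eqref{vlai}, which expresses the second sum as a $\cos\bar\th, \sin\bar\th$-weighted linear combination of the $\bar\si_k$'s, the estimate reduces to bounding $\int \phi\xi^2 \bar v |\na^\Si D\bar u|^2$ and $\int \phi\xi^2 \bar\si_k$ for each $k \in \{1,\ldots,n-1\}$, where $\xi$ is a Lipschitz cutoff with $\xi \equiv 1$ on $B_{1/2}$, $\xi \equiv 0$ outside $B_1$, and $|\na \xi| \le C$. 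For the first integral, Green's identity on $\Si$ combined with \eqref{DeSibui} and the estimates $|\bar u_i| \le \k$, $|\na^\Si \bar\th| \le C\La$ gives
\begin{equation*}
\int_\Si \phi\xi^2 |\na^\Si D\bar u|^2 \le C\k\La \int_\Si \phi\xi^2 + \Big|\int_\Si \sum_i \bar u_i \langle \na^\Si(\phi\xi^2), \na^\Si \bar u_i\rangle\Big|,
\end{equation*}
and the last term splits by Cauchy-Schwartz with parameter $\ep$ into an $\ep$-small piece absorbable into the LHS, plus contributions of the form $\f{C}{\ep}\int \bar v$ (handled by Lemma \ref{VolGDu}) and $C\ep \int |\na^\Si \phi|^2 \bar v$. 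For the $\bar\si_k$ integrals, iterating the divergence formula \eqref{divStruc} with test function $\varphi = \phi\xi^2 \bar u_j$ and using the divergence-free property $\sum_i \partial_{x_i}(\p \bar\si_k/\p \bar u_{ij}) = 0$ produces
\begin{equation*}
k\int \phi\xi^2 \bar\si_k \le C\k \int \bar\si_{k-1}\bigl(\xi^2 |\na\phi| + \phi\xi |\na \xi|\bigr),
\end{equation*}
and a Cauchy-Schwartz split together with induction on $k$ down to $\bar\si_0 \equiv 1$ bounds each such integral by the desired RHS.

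The main technical obstacle is the conversion of the Euclidean gradient $|\na\phi|$ arising in the divergence formula into the intrinsic gradient $|\na^\Si \phi|$ weighted by $\bar v$ that appears in the target inequality. This conversion relies on the pointwise bound $|\bar\si_{k-1}| \le C_n \bar v$ (from $\bar v^2 = \prod(1+\mu_i^2)$ and $\bar\si_{k-1}^2 \le \binom{n}{k-1}\bar\si_{k-1}(\mu^2) \le \binom{n}{k-1}\bar v^2$), together with a careful choice of the Cauchy-Schwartz split that pairs $|\na\phi|$ with the correct factor of $\bar v$ so as to produce $\ep |\na^\Si \phi|^2 \bar v$ on the one hand and a remainder absorbable via $\int_{B_1}\bar v \le c_{n,\k,\La}$ on the other. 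A secondary technicality is to track the Jacobian $\det J_u$ of the Lewy-Yuan rotation in transferring the rotated estimate back to the original coordinates, which is handled by the same local covering argument used at the conclusion of the proof of Lemma \ref{VolGDu}.
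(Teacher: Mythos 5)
Your proposal takes a genuinely different route from the paper, and the route has a gap.

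The paper's proof of Lemma \ref{Intfv} does \emph{not} apply the Lewy-Yuan rotation. It works directly with $u$, the symmetric polynomials $\si_k$ of $D^2u$, and the unrotated graph $\Si = G_{Du}$, inserting the weight $\ep\phi\e$ into the divergence formula \eqref{divStruc} and inducting down to $\si_0\equiv1$. The rotation is dispensable here because the sign conditions \eqref{sik***}--\eqref{sikplai***} already hold for $u$ itself once $\th\ge(n-2)\pi/2$, and for \eqref{vlai} the crude bounds $|\cos\th|,|\sin\th|\le1$ suffice: the oscillation bounds $|\sin\bar\th|\le C\La$ etc.\ that motivate the rotation in Lemma \ref{VolGDu} are needed there only to extract the refined factor $(1+\k\sup|D\th|)$, which Lemma \ref{Intfv} does not claim. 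So one of the two things you say the rotation buys you is already available for free, and the other is not used.

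The gap in the rotated route is the transfer of the $\int\phi$ term back to the original coordinates. The base case of your $\bar\si_k$-induction produces $\int\bar\phi\bar\si_0 = \int_{\bar x(B_r)}\bar\phi$ in rotated base coordinates. Changing variables gives $\int_{B_r}\phi\,\det J_u$, and $\det J_u = (\si^2+1)^{-n/2}\prod_i(\si+\la_i)$ is not uniformly bounded: in fact $\det J_u\le 2^{n/2}v$ with equality of order when the $\la_i$ are large, so the transferred term is comparable to $\int_{B_r}\phi v$, which is the quantity you are trying to bound. The ``covering argument at the conclusion of Lemma \ref{VolGDu}'' only matches up domains; it does nothing for this Jacobian factor, which is a genuine measure-theoretic distortion. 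The $\bar v$- and $|\na^{\bar\Si}\bar\phi|^2\bar v$-weighted integrals transfer cleanly (they are intrinsic integrals on the graph and the rotation is a Euclidean isometry), but the flat Lebesgue integral $\int\phi$ does not, and it is exactly the term the target inequality needs on its right-hand side. Dropping the rotation, as the paper does, removes the problem at its root.
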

\begin{proof}
By scaling, we can only consider the case $r=4$.
Let $\e,\xi$ be the functions defined in Lemma \ref{VolGDu}.
Let $\si_k$ denote the $k$-th elementary symmetric polynomial of $D^2u$ with $\si_0=1$ so that $\si_k=\si_k(\la_1,\cdots,\la_n)$. Here, $\la_1,\cdots,\la_n$ are eigenvalues of $D^2u$.
For each $k=1,\cdots,n-1$ and $\ep>0$, together with (3.5) in  \cite{WdY} (the function $b$ there instead by $\ep \phi$ here) and \eqref{sik***}\eqref{divStruc}, we can get
\begin{equation}\aligned
&\ep k\int \phi\e\si_k=\ep\int \phi\e\sum_{i,j=1}^n\f{\p\si_{k}}{\p u_{ij}}\f{\p^2u}{\p x_i\p x_j}\\
=&-\ep\int \e\sum_{i,j=1}^n\f{\p \phi}{\p x_i}\f{\p\si_{k}}{\p u_{ij}}\f{\p u}{\p x_j}-\ep\int \phi\sum_{i,j=1}^n\f{\p\e}{\p x_i}\f{\p\si_{k}}{\p u_{ij}}\f{\p u}{\p x_j}\\
\le& c_n\k\left(\ep\int_{B_{1+1/n}} \phi\si_{k-1}+\int_{B_{1+1/n}}\left(\ep^2|\na^\Si \phi|^2+n\right)v\right).
\endaligned
\end{equation}
By induction (on $k$) and \eqref{vlai}, we have (see also the Step 4 in \cite{WdY})
\begin{equation}\aligned\label{fvlai11}
\int_{B_1}\sum_{i=1}^n\f{\phi v}{1+\la_i^2}\le c_n\k^{n-1}\int_{B_2}\phi+c_n\k\int_{B_{2}}\left(\ep|\na^\Si \phi|^2+\f n\ep\right)v.
\endaligned
\end{equation}
Integrating by parts in conjuction with Cauchy-Schwartz inequality implies
\begin{equation}\aligned
-&\int_\Si u_i\xi^2\phi\De_\Si u_i=\int_\Si|\na^\Si u_i|^2\xi^2\phi+2\int_\Si u_i\xi\lan\na^\Si u_i,\na^\Si\xi\ran \phi+\int_\Si u_i\xi^2\lan\na^\Si u_i,\na^\Si \phi\ran\\
\ge&\f12\int_\Si|\na^\Si u_i|^2\xi^2\phi-2\int_\Si u_i^2|\na^\Si\xi|^2\phi-\f1{2\ep}\int_\Si|\na^\Si u_i|^2\xi^2-\f{\ep}2\int_\Si u_i^2|\na^\Si \phi|^2\xi^2.
\endaligned
\end{equation}
Analog to the argument in \eqref{bnbui2xi2*}, with \eqref{DeSibui} we get
\begin{equation}\aligned\label{fvlai22}
\int_\Si\sum_{i=1}^n|\na^\Si u_i|^2\xi^2\phi\le
&c_n\k^2\int_{B_1}\sum_{i=1}^n\f{v\phi}{1+\la_i^2}+\f{n}{\ep}\int_{B_1}v+c_n\k^2\ep\int_{B_1}|\na^\Si \phi|^2v\\
&+c_n\k\sup_{B_1}|D\th|\int_{B_1}\sum_{i=1}^n\f{v\phi}{1+\la_i^2}.
\endaligned
\end{equation}
Analog to \eqref{barvupb}, combining \eqref{fvlai11}\eqref{fvlai22} we can obtain
\begin{equation}\aligned
\int_{B_{1/2}}\phi v\le c_{n,\k,\La}\left(\f1\ep\int_{B_2}v+\int_{B_2}\phi+\ep\int_{B_2}|\na^\Si \phi|^2v\right).
\endaligned
\end{equation}
We complete the proof by a suitable covering of $B_2$ with finite balls $\{B_{1/2}(p_i)\}$.
\end{proof}

\section{Allard's regularity theorem and Harnack's inequality}

Recalling that $\tau_{*},\varrho_*$ are the constants in Lemma \ref{AllardregThm} with $N=2n$ there.
Let $(u,\th)\in\mathbb{F}_n(\tau_{*},\k)$ for some $\k\ge0$, $n\ge2$, and $L=G_{Du}$.
We further assume 
\begin{equation}\aligned\label{Allard-req}
|Du(0^n)|=0\qquad\mathrm{and}\qquad\mathcal{H}^n(L\cap\mathbf{B}_1)\le(1+\tau_{*})\omega_n.
\endaligned
\end{equation}
For two $n$-planes $P_1,P_2$, they are two points in Grassmannian manifolds $\mathbf{G}_{n,n}$ (see $\S$ 2 in \cite{JX} or $\S$ 7.1 in \cite{X} for more details). We use $d_{\mathbf{G}}(P_1,P_2)$ to denote the distance of $P_1,P_2$ in $\mathbf{G}_{n,n}$.
From Lemma \ref{AllardregThm}, there is a constant $c^*_{n}$ depending only on $n$ so that
\begin{equation}\aligned\label{Equi-cont}
\sup_{\mathbf{x},\mathbf{y}\in L\cap\mathbf{B}_{\varrho_*},\mathbf{x}\neq\mathbf{y}}|\mathbf{x}-\mathbf{y}|^{-\f12}d_{\mathbf{G}}(T_{\mathbf{x}}L,T_{\mathbf{y}}L)\le c^*_{n}.
\endaligned
\end{equation}
Let $\mathscr{A}$ be a matrix-valued function on $L$ defined by $\mathscr{A}(\mathbf{x})=\arctan D^2u(x)$ for any $\mathbf{x}=(x,Du(x))$ with $x\in B_1$. 
For each $\mathbf{p}\in L$, the tangent space of $L$ at $\mathbf{p}$ is
\begin{equation}\aligned
T_{\mathbf{p}}L=\{(x,y)\in\R^{n}\times\R^n|\ (\cos\mathscr{A}(\mathbf{p}))y=(\sin\mathscr{A}(\mathbf{p}))x,\ x\in\R^n\}.
\endaligned
\end{equation}
Clearly, \eqref{Equi-cont} implies that $\mathscr{A}$ is equicontinuous on $L\cap\mathbf{B}_{\varrho_*}$. Namely, for any $\ep>0$, there is a constant $\de(n,\ep)>0$ depending only on $n,\ep$ so that for any $\mathbf{x},\mathbf{x'}\in L\cap\mathbf{B}_{\varrho_*}$ with $|\mathbf{x}-\mathbf{x'}|<\de(n,\ep)$ there holds
\begin{equation}\aligned\label{AAxx'ep}
\left|\mathscr{A}(\mathbf{x})-\mathscr{A}(\mathbf{x'})\right|<\ep.
\endaligned
\end{equation}

Let
$\Th_{1},\cdots,\Th_{n}$ denote eigenvalues of $\mathscr{A}$ with $\f{\pi}2>\Th_{1}\ge\cdots\ge\Th_{n}>-\f{\pi}2$. There exists a real orthonormal matrix $Q$ so that $\mathscr{A}=Q\Xi Q^T$, where $\Xi=\mathrm{diag}\{\Th_{1},\cdots,\Th_{n}\}$.
Let $\mathbf{E}_i(\mathbf{x})$ denote the eigenspace of $\Th_i(\mathbf{x})$ w.r.t. $\mathscr{A}(\mathbf{x})$. In particular, dim$\mathbf{E}_i(\mathbf{x})>1$ if $\Th_i(\mathbf{x})$ has multiplicity $>1$. For two linear subspaces $P,P'$ in $\R^n$, we define
$$d_{\mathbb{S}}\left(P,P'\right):=\inf_{\mathbf{v}\in P\cap\p B_1,\mathbf{v}'\in P'\cap\p B_1}|\mathbf{v}-\mathbf{v}'|.$$

\begin{proposition}\label{ThiQxx'}
For any $\ep>0$, there is a constant $\de\in(0,\varrho_*)$ depending only on $\ep,n$ so that for every $\mathbf{x},\mathbf{x}'\in L\cap\mathbf{B}_{\varrho_*}$ with $|\mathbf{x}-\mathbf{x}'|<\de$ there holds
\begin{equation}\aligned
\sup_{1\le i\le n}\left(\left|\Th_i(\mathbf{x})-\Th_i(\mathbf{x}')\right|+d_{\mathbb{S}}\left(\mathbf{E}_i(\mathbf{x}),\mathbf{E}_i(\mathbf{x}')\right)\right)<\ep.
\endaligned
\end{equation}
\end{proposition}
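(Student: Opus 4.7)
The proof rests on two standard ingredients: the equicontinuity of $\mathscr{A}$ on $L\cap\mathbf{B}_{\varrho_*}$ supplied by \eqref{AAxx'ep} (which descends from Allard's regularity via \eqref{Equi-cont}), and spectral perturbation theory for real symmetric matrices. Given $\ep>0$, I would first choose $\delta_1=\delta(n,\ep/2)$ from \eqref{AAxx'ep} so that $|\mathbf{x}-\mathbf{x}'|<\delta_1$ forces $|\mathscr{A}(\mathbf{x})-\mathscr{A}(\mathbf{x}')|<\ep/2$; Weyl's inequality for symmetric matrices then gives $|\Th_i(\mathbf{x})-\Th_i(\mathbf{x}')|\le|\mathscr{A}(\mathbf{x})-\mathscr{A}(\mathbf{x}')|<\ep/2$ uniformly in $i$. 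The substantive content of the proposition is therefore the eigenspace bound.

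For the eigenspace estimate I would argue by compactness and contradiction. Suppose the claim fails: there exist $\ep_0>0$, sequences $\mathbf{x}_k,\mathbf{x}_k'\in L\cap\mathbf{B}_{\varrho_*}$ with $|\mathbf{x}_k-\mathbf{x}_k'|\to 0$, and indices $i_k$ (constant after a subsequence, say equal to $i$) such that $d_{\mathbb{S}}(\mathbf{E}_i(\mathbf{x}_k),\mathbf{E}_i(\mathbf{x}_k'))\ge\ep_0$. By the compactness of the closure of $L\cap\mathbf{B}_{\varrho_*}$, pass to further subsequences so that both $\mathbf{x}_k,\mathbf{x}_k'\to\mathbf{x}_0$, and by the equicontinuity of $\mathscr{A}$, $\mathscr{A}(\mathbf{x}_k),\mathscr{A}(\mathbf{x}_k')\to\mathscr{A}(\mathbf{x}_0)$. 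Partition the eigenvalues of $\mathscr{A}(\mathbf{x}_0)$ into clusters of equal values and let $V=\mathbf{E}_i(\mathbf{x}_0)$ be the full eigenspace of the cluster containing $\Th_i(\mathbf{x}_0)$. By Riesz-projection perturbation theory, applied along a contour in the complex plane separating this cluster from the remainder of the spectrum of $\mathscr{A}(\mathbf{x}_0)$, the total spectral subspaces of $\mathscr{A}(\mathbf{x}_k),\mathscr{A}(\mathbf{x}_k')$ attached to eigenvalues near $\Th_i(\mathbf{x}_0)$ both converge to $V$; in particular $\mathbf{E}_i(\mathbf{x}_k)$ and $\mathbf{E}_i(\mathbf{x}_k')$ are asymptotically contained in $V$. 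One then uses that $d_{\mathbb{S}}$ is defined as an infimum to select matching unit vectors $v_k\in\mathbf{E}_i(\mathbf{x}_k)$ and $v_k'\in\mathbf{E}_i(\mathbf{x}_k')$ whose mutual distance tends to $0$, contradicting the standing lower bound.

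The principal difficulty lies in the multiplicity-splitting regime, which I expect to be the main obstacle: when $\Th_i(\mathbf{x}_0)$ has multiplicity $m>1$ but splits into distinct simple eigenvalues at both $\mathbf{x}_k$ and $\mathbf{x}_k'$, the index-$i$ eigenspaces become one-dimensional lines sitting inside $V$, whose precise directions are determined by higher-order features of the perturbation and can a priori differ between the two sequences. Here the infimum in the definition of $d_{\mathbb{S}}$ (as opposed to a gap or Hausdorff-type supremum between unit spheres) is essential, since one only needs to produce a single pair of close unit vectors rather than a uniform matching of all unit directions; the matching argument above must then be executed by grouping $\mathbf{E}_i(\mathbf{x}_k)$ and $\mathbf{E}_i(\mathbf{x}_k')$ according to their clusters and exploiting the ordering of split eigenvalues within each cluster, so that candidate unit vectors can be transported through $V$ back into each of the two eigenspaces with vanishing error.
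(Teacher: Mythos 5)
You correctly single out the multiplicity-splitting regime as the crux, but the matching argument you sketch for it does not actually close the gap, and the paper's own proof of the proposition contains the identical unjustified step at \eqref{EiEi'infty}, where $\lim_{\ell\to\infty}\sup_i d_{\mathbb{S}}\bigl(\mathbf{E}_{i,\ell}(\mathbf{x}_\ell),\mathbf{E}_{i,\ell}(\mathbf{x}_\ell')\bigr)$ is equated with $\sup_i d_{\mathbb{S}}\bigl(\mathbf{E}_{i,\infty},\mathbf{E}'_{i,\infty}\bigr)$ with no argument. Riesz projections control only the \emph{total} spectral subspace attached to a cluster. Once the cluster splits into simple eigenvalues at $\mathbf{x}_k$ and at $\mathbf{x}_k'$, the resulting one-dimensional eigenspaces inside $V$ are determined by the \emph{direction} of the small perturbation, and the ordering of the split eigenvalues imposes no constraint whatsoever on those directions for the two sequences. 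Nor does the infimum in $d_{\mathbb{S}}$ help: for one-dimensional $P=\mathrm{span}(v)$ and $P'=\mathrm{span}(v')$ one has $d_{\mathbb{S}}(P,P')=\min(|v-v'|,|v+v'|)$, which is not forced to be small merely because $v,v'\in V$ when $\dim V>1$.

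In fact the statement is false as written, and for precisely this reason. Take $n=2$ and $u(x)=\tfrac{\rho}{6}(x_1^3-3x_1x_2^2)$ with $\rho>0$ small. Then $u$ is harmonic, so $\theta\equiv 0=\tfrac{n-2}{2}\pi$, $Du(0^n)=0$, and for $\rho$ small all the standing hypotheses of $\S$4 (membership in $\mathbb{F}_2(\tau_*,\kappa)$ and \eqref{Allard-req}) are satisfied. On $x=(a,0)$ one gets $D^2u=\rho\,\mathrm{diag}(a,-a)$, hence $\mathbf{E}_1(\mathbf{x})=\mathrm{span}(e_1)$; on $x'=(0,a)$ one gets $D^2u=\rho\left(\begin{smallmatrix}0 & -a\\ -a & 0\end{smallmatrix}\right)$, hence $\mathbf{E}_1(\mathbf{x}')=\mathrm{span}\bigl((1,-1)/\sqrt{2}\bigr)$. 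Letting $a\downarrow 0$ gives $|\mathbf{x}-\mathbf{x}'|\to 0$, while $d_{\mathbb{S}}\bigl(\mathbf{E}_1(\mathbf{x}),\mathbf{E}_1(\mathbf{x}')\bigr)=\sqrt{2-\sqrt{2}}$ for every $a>0$, so no $\delta$ can work for $\ep<\sqrt{2-\sqrt{2}}$. What \emph{is} true, and what is the only thing actually invoked later (Lemma \ref{wnnge0}, under \eqref{A0vert}), is eigenspace continuity under the additional hypothesis that $\Th_i$ is simple with a uniform spectral gap; there your first-paragraph argument (Weyl's inequality plus Riesz projection) applies verbatim and is sound. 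Without such a gap hypothesis the claim cannot be rescued, by the example above, so the "principal difficulty" you flag is not just a technical obstacle to the proof but a genuine failure of the statement.
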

\begin{proof}
Let us prove it by contradiction. We suppose there are sequences of $\{(u_\ell,\th_\ell)\}_{\ell\ge1}\subset\mathbb{F}_n(\tau_*,\k_i)$ with $\mathcal{H}^n(G_{Du_\ell}\cap\mathbf{B}_1)\le(1+\tau_{*})\omega_n$, $\mathbf{x}_\ell,\mathbf{x}_\ell'\in G_{Du_\ell}\cap\mathbf{B}_{\varrho_*}$, $\k_i>0$ so that $\lim_{\ell\to\infty}|\mathbf{x}_\ell-\mathbf{x}_\ell'|=0$ and  
\begin{equation}\aligned\label{Xixi'xx'}
\lim_{\ell\to\infty}\left|\Xi_\ell(\mathbf{x}_\ell)-\Xi'_\ell(\mathbf{x}_\ell')\right|+\lim_{\ell\to\infty}\sup_i d_{\mathbb{S}}\left(\mathbf{E}_{i,\ell}(\mathbf{x}_\ell),\mathbf{E}_{i,\ell}(\mathbf{x}_\ell')\right)>0,
\endaligned
\end{equation}
where $\Xi_\ell=\mathrm{diag}\{\Th_{1,\ell},\cdots,\Th_{n,\ell}\}$ with $\Th_{1,\ell},\cdots,\Th_{n,\ell}$ denoting eigenvalues of $\mathscr{A}_\ell:=\arctan D^2u_\ell$ with $\f{\pi}2>\Th_{1,\ell}\ge\cdots\ge\Th_{n,\ell}>-\f{\pi}2$, and $\mathbf{E}_{i,\ell}$ denotes the eigenspace of $\Th_{i,\ell}$ w.r.t. $\mathscr{A}_\ell$.
Let $Q_\ell,Q_\ell'$ be real orthonormal matrice so that 
$\mathscr{A}_\ell(\mathbf{x}_\ell)=Q_\ell\Xi_\ell(\mathbf{x}_\ell) Q_\ell^T$ and $\mathscr{A}_\ell(\mathbf{x}_\ell')=Q'_\ell\Xi_\ell(\mathbf{x}_\ell') (Q'_\ell)^T$.
Up to choosing subsequences, we can assume that $Q_\ell,Q_\ell',\Xi_\ell(\mathbf{x}_\ell),\Xi'_\ell(\mathbf{x}_\ell')$ converges to $Q_\infty,Q'_\infty,\Xi_\infty,\Xi'_\infty$, respectively. Hence,
\begin{equation}\aligned\label{AixiA'}
\left|Q_\infty\Xi_\infty Q_\infty^T-Q_\infty'\Xi'_\infty (Q_\infty')^T\right|=&\lim_{\ell\to\infty}\left|Q_\ell\Xi_\ell(\mathbf{x}_\ell) Q_\ell^T-Q_\ell'\Xi'_\ell(\mathbf{x}_\ell') (Q_\ell')^T\right|\\
=&\lim_{\ell\to\infty}\left|\mathscr{A}_\ell(\mathbf{x}_\ell)-\mathscr{A}_\ell(\mathbf{x}_\ell')\right|.
\endaligned
\end{equation}
Let $\Th_{1,\infty}\ge\cdots\ge\Th_{n,\infty}$ be the eigenvalues of $\Xi_\infty$, and $\Th'_{1,\infty}\ge\cdots\ge\Th'_{n,\infty}$ be the eigenvalues of $\Xi'_\infty$.
Let $\mathbf{E}_{i,\infty}$ and $\mathbf{E}'_{i,\infty}$ denote the eigenspace of $\Th_{i,\infty}$ and $\Th'_{i,\infty}$ w.r.t. $Q_\infty\Xi_\infty Q_\infty^T$ and $Q_\infty'\Xi'_\infty (Q_\infty')^T$, respectively. In this situation, we have
\begin{equation}\aligned\label{EiEi'infty}
\sup_i d_{\mathbb{S}}\left(\mathbf{E}_{i,\infty},\mathbf{E}'_{i,\infty}\right)=\lim_{\ell\to\infty}\sup_i d_{\mathbb{S}}\left(\mathbf{E}_{i,\ell}(\mathbf{x}_\ell),\mathbf{E}_{i,\ell}(\mathbf{x}_\ell')\right)=0.
\endaligned
\end{equation}
From \eqref{AAxx'ep}, it follows that
\begin{equation}\aligned
\lim_{\ell\to\infty}\left|\mathscr{A}_\ell(\mathbf{x}_\ell')-\mathscr{A}_\ell(\mathbf{x}_\ell')\right|=0.
\endaligned
\end{equation}
Hence, \eqref{AixiA'} gives
\begin{equation}\aligned
Q_\infty\Xi_\infty Q_\infty^T=Q_\infty'\Xi'_\infty (Q_\infty')^T,
\endaligned
\end{equation}
i.e.,
\begin{equation}\aligned
\Xi_\infty=Q_\infty^TQ_\infty'\Xi'_\infty (Q_\infty^TQ_\infty')^T.
\endaligned
\end{equation}
This means that $\Xi_\infty$ and $\Xi'_\infty$ have the same eigenvalues, i.e., $\Xi_\infty=\Xi'_\infty$. 
From \eqref{Xixi'xx'}\eqref{EiEi'infty}, it follows that
\begin{equation}\aligned
&\sup_i d_{\mathbb{S}}\left(\mathbf{E}_{i,\infty},\mathbf{E}'_{i,\infty}\right)=\left|\Xi_\infty-\Xi'_\infty\right|+\sup_i d_{\mathbb{S}}\left(\mathbf{E}_{i,\infty},\mathbf{E}'_{i,\infty}\right)\\
=&\lim_{\ell\to\infty}\left|\Xi_\ell(\mathbf{x}_\ell)-\Xi'_\ell(\mathbf{x}_\ell)\right|+\lim_{\ell\to\infty}\sup_i d_{\mathbb{S}}\left(\mathbf{E}_{i,\ell}(\mathbf{x}_\ell),\mathbf{E}_{i,\ell}(\mathbf{x}_\ell')\right)>0.
\endaligned
\end{equation}
This contradicts to \eqref{EiEi'infty}. We complete the proof.
\end{proof}

Let $\be^*=(\be^*_1,\cdots,\be^*_n)=(\pi/2,\cdots,\pi/2,-\pi/2)\in\R^n$, and $\mathfrak{R}_{\be^*}$ be a rotation defined by $\mathfrak{R}_{\be^*}(x,y)=(\hat{x}_1,\cdots,\hat{x}_n,\hat{y}_1,\cdots,\hat{y}_n)$ with
\begin{equation}\label{Rbe**}
\left\{\begin{split}
&\hat{x}_i=(\cos\be^*_i) x_i+(\sin\be^*_i) y_i\\
&\hat{y}_j=-(\sin\be^*_j) x_j+(\cos\be^*_j) y_j\\
\end{split}\right.\qquad\mathrm{for\ each}\ 1\le i,j\le n.
\end{equation}
Denote $\Si=\mathfrak{R}_{\be^*}(L)$. 
Let $\bar{x}=(\bar{x}_1,\cdots,\bar{x}_n):\, B_1\to\R^n$ be a mapping defined by $\bar{x}_i=(\cos\be^*_i) x_i+(\sin\be^*_i)u_i(x)$.
We suppose
\begin{equation}\aligned\label{A0vert}
\left|\mathscr{A}(\mathbf{0})-\mathrm{diag}\{\be^*_1,\cdots,\be^*_n\}\right|<\f1{10n}.
\endaligned
\end{equation}
Noting $|D\th|\le\tau_*$ on $B_1$.
Combining \eqref{Allard-req}\eqref{A0vert} and Allard's regularity theorem, there is a constant $\de\in(0,1/3)$ depending only on $n$ so that $\Si\cap\mathbf{B}_{2\de}$ is a connected Lagrangian graph over some open set $W$ of $\R^n$ with the graphic function $Dw$ for some $w\in C^4(W)$ and $B_\de\subset W$ so that $|w(0^n)|=|Dw(0^n)|=0$, $|D^2w|\le1$ on $B_\de$ and 
\begin{equation}\aligned\label{W3q}
|D^2w|_{C^\a(B_\de)}< c(n,\a)\qquad \mathrm{for\ each}\ \a\in(0,1).
\endaligned
\end{equation}
Here, $c(n,\a)$ is a positive constant depending only on $n,\a$.
From \eqref{Dbarubebxx} in Appendix I, 
\begin{equation}\aligned\label{Dwbxxbe**}
Dw(\bar{x}(x))=-x\sin\mathbf{S}_{\be^*}=-x\,\mathrm{diag}\{1,\cdots,1,-1\}=(-x_1,\cdots,-x_{n-1},x_n).
\endaligned
\end{equation}
Let $w_i:=\p_{x_i}w$, $w_{ij}:=\p^2_{x_ix_j}w$ and so on. 
Then \eqref{Dwbxxbe**} infers $x_n=w_n(\bar{x}(x))$, and $x_i=-w_i(\bar{x}(x))$ for $1\le i\le n-1$.
Combining \eqref{D2barubebxx} in Appendix I, there holds
$$\mathrm{tr}(\arctan D^2w)\Big|_{\bar{x}(x)}=\mathrm{tr}(\arctan D^2u)(x)-\sum_{i=1}^n\be^*_i=\th(-w_1,\cdots,-w_{n-1},w_n)\Big|_{\bar{x}(x)}-\f{n-2}2\pi$$
on $B_\de$.
We set 
\begin{equation}\aligned\label{tildeth}
\hat{\th}(y_1,\cdots,y_n):=\th(-y_1,\cdots,-y_{n-1},y_n)-\f{n-2}2\pi\qquad \mathrm{for\ every}\ (y_1,\cdots,y_n)\in B_\de. 
\endaligned
\end{equation}
Then
\begin{equation}\aligned
\mathrm{tr}(\arctan D^2w)=\hat{\th}(Dw)\qquad \mathrm{on}\ B_\de,
\endaligned
\end{equation}
and
\begin{equation}\aligned\label{gijwijk}
\sum_{i,j=1}^n\hat{g}^{ij}w_{ijk}=\sum_{i=1}^n\hat{\th}_iw_{ik}\qquad \mathrm{on}\ B_\de,
\endaligned
\end{equation}
where $\hat{g}^{-1}=(\hat{g}^{ij})$ is the inverse matrix of $\hat{g}:=I_n+(D^2w)^2$.
From $\f{n-2}2\pi\le \th<\f{n}2\pi$, we get $\hat{\th}\in[0,\pi)$.
Let $\la_1\ge\cdots\ge\la_n$ denote the eigenvalues of $D^2u$. 
In particular, $\la_n<0<\la_{n-1}$ on $\bar{x}^{-1}(B_\de)$ from \eqref{A0vert}.
There is an orthonormal basis $\g=(\g_1,\cdots,\g_n)$ 
so that 
\begin{equation}\aligned\label{D2gilayXi**}
(D^2u)\g_i=\la_i\g_i\qquad \mathrm{and}\qquad D^2u=\g\Xi \g^T
\endaligned
\end{equation}
with $\Xi=\mathrm{diag}\{\la_1,\cdots,\la_n\}$. We suppose $\g_n(0^n)=E_n^T$ up to a rotation.
Let $\hat{\la}_1\ge\cdots\ge\hat{\la}_n$ denote the eigenvalues of $D^2w$. 
\begin{lemma}\label{wnnge0}
For the suitably small $\de>0$, there holds
\begin{equation}\aligned\label{wnnbla1}
w_{nn}\ge\hat{\la}_1/2>0\qquad\qquad \mathrm{on}\ B_\de.
\endaligned
\end{equation}
\end{lemma}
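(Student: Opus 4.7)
The plan is a direct matrix-perturbation argument centered at $\bar{x}=0$. First, I would pin down $D^2w(0^n)$ explicitly. By applying the block rotation $\mathfrak{R}_{\be^*}$ to the tangent frame $\{(\g_i(0^n),\la_i(0^n)J\g_i(0^n))\}_{i=1}^n$ of $L$ at $\mathbf{0}$ and using $\g_n(0^n)=E_n$ together with $\be^*_1=\cdots=\be^*_{n-1}=\pi/2$ and $\be^*_n=-\pi/2$, a direct coordinate calculation shows that for $i<n$ the $i$-th tangent vector is sent to $\la_i(0^n)\g_i(0^n)$ in the $\bar{x}$-block plus $-\g_i(0^n)$ in the $\hat{y}$-block, and the $n$-th tangent vector is sent to $-\la_n(0^n)E_n$ in the $\bar{x}$-block plus $E_n$ in the $\hat{y}$-block. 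Reading off the resulting graph yields that $\{\g_1(0^n),\ldots,\g_{n-1}(0^n),E_n\}$ is an orthonormal eigenbasis of $D^2w(0^n)$ with eigenvalues $\{-1/\la_1(0^n),\ldots,-1/\la_n(0^n)\}$. From \eqref{A0vert} and $\g_n(0^n)=E_n$ (which forces $\Th_n(\mathbf{0})=\arctan\la_n(0^n)$) we get $\la_n(0^n)<-\cot(1/(10n))<0$ and $\la_i(0^n)>\cot(1/(10n))>0$ for $i<n$. Hence $\hat{\la}_1(0^n)=-1/\la_n(0^n)$ is the unique positive eigenvalue of $D^2w(0^n)$, its eigenvector is $E_n$, $w_{nn}(0^n)=\hat{\la}_1(0^n)$, and the spectral gap satisfies $g:=\hat{\la}_1(0^n)-\hat{\la}_2(0^n)\geq\hat{\la}_1(0^n)>0$.

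Next I would invoke \eqref{W3q}: $D^2w$ is H\"older continuous on $B_\de$ with $|D^2w(\bar{x})-D^2w(0^n)|\leq c(n,\a)|\bar{x}|^\a$. Standard matrix perturbation theory for simple isolated eigenvalues then yields, provided $c(n,\a)\de^\a$ is sufficiently small compared to $g$, that $\hat{\la}_1(\bar{x})$ remains a simple positive eigenvalue of $D^2w(\bar{x})$ on $B_\de$ close to $\hat{\la}_1(0^n)$, with normalized eigenvector $\mathbf{e}_1(\bar{x})$ satisfying $|\mathbf{e}_1(\bar{x})-E_n|\leq Cc(n,\a)|\bar{x}|^\a/g$, while all remaining eigenvalues $\hat{\la}_i(\bar{x})$ for $i\geq 2$ stay strictly negative.

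Expanding $E_n=a_1\mathbf{e}_1(\bar{x})+\sum_{i\geq 2}a_i\mathbf{e}_i(\bar{x})$ with $a_1^2\geq 1-C^2c(n,\a)^2|\bar{x}|^{2\a}/g^2$ and $\sum_{i\geq 2}a_i^2\leq C^2c(n,\a)^2|\bar{x}|^{2\a}/g^2$, I would compute
\begin{equation*}
w_{nn}(\bar{x})=a_1^2\hat{\la}_1(\bar{x})+\sum_{i\geq 2}a_i^2\hat{\la}_i(\bar{x})\geq a_1^2\hat{\la}_1(\bar{x})-\sum_{i\geq 2}a_i^2|\hat{\la}_i(\bar{x})|.
\end{equation*}
Since $\max_{i\geq 2}|\hat{\la}_i(\bar{x})|$ is of the same order as $\hat{\la}_1(0^n)$ (both bounded above by $\tan(1/(10n))+c(n,\a)\de^\a$), one last shrinkage of $\de$ delivers $w_{nn}\geq\hat{\la}_1/2>0$ on $B_\de$. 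The main obstacle is that the gap $g$ admits no uniform positive lower bound across $\mathbb{F}_n(\tau_*,\k)$, so the final $\de$ genuinely depends on $\hat{\la}_1(0^n)=-1/\la_n(0^n)$ through the particular solution $u$; this is consistent with the lemma's phrasing of a "suitably small" $\de$, which permits the threshold to depend on the solution at hand.
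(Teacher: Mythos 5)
Your explicit calculation of $D^2w(0^n)$ and the reduction to the representation $w_{nn}=\sum_ia_i^2\hat{\la}_i$ with $E_n=\sum_ia_i\mathbf{e}_i$ are correct and match what the paper does. The genuine gap is the last step, and you have in fact noticed it yourself but mis-assessed its severity: applying eigenvalue/eigenvector perturbation theory to $D^2w$ requires a spectral gap lower bound, and the gap $g=\hat{\la}_1(0^n)-\hat{\la}_2(0^n)\ge\hat{\la}_1(0^n)$ has no uniform lower bound. Your $\de$ therefore scales like a positive power of $\hat{\la}_1(0^n)$, i.e.\ it depends on the individual solution $u$. This is \emph{not} compatible with how the lemma is used: in the proof of Theorem \ref{main} in $\S 7$, Lemma \ref{wnnge0} and the resulting Harnack inequality (Theorem \ref{Harnack}) are applied to a sequence $w_\ell$ with $|D^2w_\ell(0^n)|\to 0$, so that $\hat{\la}_1^{(\ell)}(0^n)\to 0$; one then lets $\ell\to\infty$ to conclude $w_{nn}\equiv 0$ on a \emph{fixed} ball $\overline{\mathbf{B}}_{\de/2}$. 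With your $\de$, that ball would shrink to a point and the limiting argument collapses. So the $\de$ in the lemma must depend only on $n$ (after the normalizations in force), not on the solution.

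The way to repair this --- and what the paper actually does --- is to run the perturbation/continuity argument not on $D^2w$ but on $\mathscr{A}=\arctan D^2u$ (equivalently, on the tangent planes of $L$). There the relevant gap is uniform: condition \eqref{A0vert} pins $\Th_n(\mathbf{0})$ within $1/(10n)$ of $-\pi/2$ and the other $\Th_i(\mathbf{0})$ within $1/(10n)$ of $\pi/2$, so $\Th_n$ is separated from $\{\Th_1,\dots,\Th_{n-1}\}$ by a gap of order $\pi$, independently of the solution. Combined with the uniform equicontinuity of $\mathscr{A}$ coming from Allard regularity (inequality \eqref{AAxx'ep} / \eqref{Equi-cont}), this gives a uniform modulus of continuity for the eigenvector $\g_n$, which is exactly what Proposition \ref{ThiQxx'} supplies; the paper invokes it to get $|\g_n-E_n|$ small on $B_\de$ with $\de$ depending only on $n$. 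Since $\hat{\g}_n(\bar{x}(x))=\g_n(x)$ is also the $D^2w$-eigenvector for $\hat{\la}_1$, you recover $a_n^2\ge 3/4$ uniformly, and then the sharp pointwise bound $|\hat{\la}_i|\le\hat{\la}_1$ (from $\la_i+\la_n\ge0$ for supercritical phase, via \eqref{sik***}, rather than your cruder $\tan(1/(10n))+c\de^\a$) closes the estimate cleanly: $w_{nn}\ge(2a_n^2-1)\hat{\la}_1\ge\hat{\la}_1/2$. In short: correct setup and algebra, but the perturbation must be done on $\arctan D^2u$, where the gap is uniformly of size $\sim\pi$, not on $D^2w$, where it can vanish.
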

\begin{proof}
From \eqref{D2uSinSv} in Appendix I, and $\la_n<0$ on $\bar{x}^{-1}(B_\de)$, we have
\begin{equation}\aligned\label{hatlala}
\hat{\la}_1(\bar{x}(x))=-\f1{\la_n(x)}>0,\qquad \hat{\la}_{i+1}(\bar{x}(x))=-\f1{\la_i(x)} \quad \mathrm{for\ each}\ i\in\{1,\cdots,n-1\}.
\endaligned
\end{equation}
Combining with \eqref{D2gilayXi**}, it follows that
\begin{equation}\aligned
(D^2w)\hat{\g}_n=\hat{\la}_{1}\hat{\g}_n,\qquad (D^2w)\hat{\g}_i=\hat{\la}_{i+1}\hat{\g}_i,
\endaligned
\end{equation}
where $\hat{\g}_i(\bar{x}(x))=\g_i(x)$ for each $i\in\{1,\cdots,n\}$.
Hence, there is a function $a=(a_1,\cdots,a_n)$ on $B_\de$ with $|a|=1$ and $E_n^T=\sum_{i=1}^na_i\hat{\g}_i$. Set $\hat{\la}_{n+1}=\hat{\la}_1$.
Then
\begin{equation}\aligned
w_{nn}=\lan E_n^T,(D^2w)E_n^T\ran=\sum_{i=1}^n\lan E_n^T,a_i\hat{\la}_{i+1}\hat{\g}_i\ran=\sum_{i=1}^n\hat{\la}_{i+1}a_i^2\qquad  \mathrm{on}\ B_\de.
\endaligned
\end{equation}
From \eqref{A0vert}, we get that $\hat{\la}_1$ has multiplicity one w.r.t. $D^2w$ on $B_\de$.
From Proposition \ref{ThiQxx'} and $\g_n(0^n)=E_n^T$, we can choose the suitably small $\de>0$ so that $|\g_n-E_n|$ is sufficiently small on $B_\de$. Then it can give
\begin{equation}\aligned
\sum_{i=1}^{n-1}a_i^2+|1-a_n|^2<\f1{16}\qquad  \mathrm{on}\ B_\de.
\endaligned
\end{equation}
From $|\la_n|\le\la_i$ for each $i$, and \eqref{hatlala}, we get $|\hat{\la}_i|\le\hat{\la}_1$.
Therefore,
\begin{equation}\aligned
w_{nn}\ge\hat{\la}_1a_n^2-\sum_{i=1}^{n-1}|\hat{\la}_i|a_i^2\ge\hat{\la}_1\left(1-\f14\right)^2-\hat{\la}_1\sum_{i=2}^na_i^2\ge\hat{\la}_1/2\qquad  \mathrm{on}\ B_\de,
\endaligned
\end{equation}
as desired.
\end{proof}
Let $\na=\na^\Si$, and $\De_\Si$ be the Laplacian of $\Si$. Then
\begin{equation}\aligned
\De_{\Si}\mathbf{x}=J\na\hat{\th}=&\sum_{j,k=1}^n\hat{g}^{jk}(\na_{E_j+\sum_iw_{ij}E_{n+i}}\hat{\th})J\left(E_k+\sum_{k'=1}^nw_{kk'}E_{n+k'}\right)\\
=&\sum_{i,j,k=1}^n\hat{g}^{jk}\hat{\th}_iw_{ij}\left(E_{n+k}-\sum_{k'=1}^nw_{kk'}E_{k'}\right).
\endaligned
\end{equation}
In particular, for each $j\in\{1,\cdots,n\}$
\begin{equation}\aligned\label{DeSixj}
\f1{\sqrt{\det \hat{g}}}\sum_{i=1}^n\p_i(\sqrt{\det \hat{g}}\hat{g}^{ij})=\De_{\Si}x_j=-\sum_{i,k,l=1}^n\hat{g}^{kl}\hat{\th}_iw_{il}w_{jk}.
\endaligned
\end{equation}
We take the derivative of
\begin{equation}\aligned\label{DeSiwn**}
\De_{\Si}w_n=\sum_{i,j=1}^n\hat{g}^{ij}w_{ijn}+\sum_{j=1}^nw_{jn}\De x_j
\endaligned
\end{equation}
w.r.t. $x_n$, and get
\begin{equation}\aligned\label{pxnDeSiwn=}
\p_{x_n}(\De_{\Si}w_n)=&\sum_{i,j=1}^n\left(\hat{g}^{ij}w_{ijnn}+\left(\p_{x_n}\hat{g}^{ij}\right)w_{ijn}\right)+\sum_{j=1}^n\left(w_{jnn}\De x_j+w_{jn}\p_{x_n}(\De x_j)\right)\\
=&\De_{\Si}w_{nn}+\sum_{i,j=1}^n\left(\p_{x_n}\hat{g}^{ij}\right)w_{ijn}+\sum_{j=1}^nw_{jn}\p_{x_n}(\De x_j).
\endaligned
\end{equation}
From \eqref{gijwijk} and \eqref{DeSiwn**}, we get
\begin{equation}\aligned\label{DeSiwn***}
\De_{\Si}w_n=\sum_{i=1}^n\hat{\th}_iw_{in}+\sum_{j=1}^nw_{jn}\De x_j.
\endaligned
\end{equation}
Substituting \eqref{DeSiwn***} into the left hand of \eqref{pxnDeSiwn=} gives
\begin{equation}\aligned\label{DeSiwnn}
\De_{\Si}w_{nn}=\p_{x_n}\left(\sum_{i=1}^n\hat{\th}_iw_{in}\right)-\sum_{i,j=1}^n\left(\p_{x_n}\hat{g}^{ij}\right)w_{ijn}+\sum_{j=1}^nw_{jnn}\De_\Si x_j.
\endaligned
\end{equation}
Let $A_\Si$ denote the second fundamental form of $\Si$ in $\R^n\times\R^n$. We set 
$$ f=w_{nn},\qquad\mathbf{h}=\sum_{i=1}^n\hat{\th}_iw_{in},\qquad \Psi=|A_\Si|+|A_\Si|^2$$ 
and 
$$\Phi=\f1{w_{nn}}\left(\sum_{j=1}^nw_{jnn}\De_\Si x_j-\sum_{i,j=1}^n\p_{x_n}\left(\hat{g}^{ij}\right)w_{ijn}\right).$$
From \eqref{W3q}, Lemma \ref{wnnge0} and \eqref{DeSixj}, we have 
\begin{equation}\aligned\label{hPhiB}
|\mathbf{h}|\le c f,\qquad |\Phi|\le c f|A_\Si|+c|A_\Si|^2\le c\Psi.
\endaligned
\end{equation}
Here, $c\ge1$ is a general constant depending only on $n$,
which may change from line to line.
We rewrite the equation \eqref{DeSiwnn} as follows:
\begin{equation}\aligned\label{DeSibfw}
\De_{\Si} f=\p_{x_n}\mathbf{h}+\Phi f.
\endaligned
\end{equation}
We fix a point $z=(z',z'')\in\Si\cap\mathbf{B}_{\de/4}$. For each $q\ge0$, $r\in(0,1)$ and $F\in L^q(\Si\cap \mathbf{B}_r(z))$, we define
\begin{equation}\aligned
||F||_{q,r}=\left(\fint_{\Si\cap \mathbf{B}_r(z)}|F|^q\right)^{1/q}:=\left(\f1{\mathcal{H}^n(\Si\cap \mathbf{B}_r(z))}\int_{\Si\cap \mathbf{B}_r(z)}|F|^q\right)^{1/q}.
\endaligned
\end{equation}
Noting $|Dw(0^n)|=0$ and $|D^2w|\le1$ on $B_\de$. From \eqref{hijk} and \eqref{gijwijk}, for $q>1$ we have a $L^q$-estimate (see (9.40) in \cite{GT} for instance):
\begin{equation}\aligned\label{W3q*}
\r^q\fint_{\Si\cap \mathbf{B}_{\r}(z)}|A_\Si|^q\le c\r^q\fint_{B_\r(z')}|D^3w|^q\le c(n,q)\qquad \mathrm{for\ any}\ \r\in(0,\de/2].
\endaligned
\end{equation}

Let us carry out a (modified) De Giorgi-Nash-Moser iteration (see \cite{M} for an early version).
\begin{lemma}\label{SubmeanV}
For each $k\ge1$, there is a constant $c_{k}$ depending only on $n,k$ so that
\begin{equation}\aligned
|| f||_{\infty,r/2}\le c_{k,\k,\La}|| f||_{k,r}\qquad \mathrm{for\ each}\ 0<r<\de/4.
\endaligned
\end{equation}
\end{lemma}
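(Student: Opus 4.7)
The plan is to carry out a De Giorgi--Nash--Moser iteration for the positive function $f=w_{nn}$, exploiting the strict positivity $f\ge \hat\la_1/2>0$ from Lemma \ref{wnnge0}, the equation \eqref{DeSibfw}, and the scale-invariant $L^q$-bound \eqref{W3q*} on $|A_\Si|$. Since $\Si\cap\mathbf{B}_{2\de}$ is the Lagrangian graph of $Dw$ with $|D^2w|\le 1$, integration over $\Si$ is equivalent to integration over its projection in $B_\de$ up to a bounded area factor $\sqrt{\det\hat g}$, so either the Michael--Simon inequality (valid because $|H_\Si|=|J\na\hat\th|\le\tau_*$ is bounded) or the Euclidean Sobolev inequality on the parameter side is at my disposal.

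\textbf{Caccioppoli step.} For a cutoff $\zeta\in C_c^\infty(\mathbf{B}_r(z))$ and an exponent $p\ge 2$, I would test \eqref{DeSibfw} against $\zeta^2 f^{p-1}$ and integrate on $\Si$. Intrinsic integration by parts on $\Si$ turns the left-hand side into $-(p-1)\int\zeta^2 f^{p-2}|\na f|^2-2\int \zeta f^{p-1}\lan\na\zeta,\na f\ran$. The divergence term $\int\zeta^2 f^{p-1}\p_{x_n}\mathbf{h}$ I would pull back to $B_\de$ via the area element $\sqrt{\det\hat g}$ and integrate by parts in $x_n$, producing a piece involving $\mathbf{h}\,\p_{x_n}(\zeta^2 f^{p-1})$ and a piece weighted by $\p_{x_n}\log\sqrt{\det\hat g}$. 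Since $|\p_{x_n}\log\sqrt{\det\hat g}|\le c|A_\Si|$ thanks to $|D^2w|\le 1$ and \eqref{W3q}, together with $|\mathbf{h}|\le c f$ from \eqref{hPhiB}, $|\Phi|\le c\Psi$, and Cauchy--Schwarz to absorb the cross term $c(p-1)\int\zeta^2 f^{p-1}|\p_{x_n}f|$ into $(p-1)\int\zeta^2 f^{p-2}|\na f|^2$, I expect the Caccioppoli bound
\begin{equation*}
\int \zeta^2 f^{p-2}|\na f|^2 \le C(p)\int(|\na\zeta|^2+\zeta^2)f^p + C(p)\int\zeta^2 f^p\,\Psi.
\end{equation*}

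\textbf{Sobolev and absorption step.} Setting $g=f^{p/2}$ and applying Sobolev gives
\begin{equation*}
\left(\int(\zeta g)^{\f{2n}{n-2}}\right)^{\f{n-2}n}\le C\int(|\na\zeta|^2+\zeta^2)g^2+C\int\zeta^2 g^2\,\Psi.
\end{equation*}
For the $\Psi$-term I would fix $q=q(n)$ large and apply H\"older together with \eqref{W3q*} to dominate $\int\zeta^2 g^2\Psi$ by a product of $\|A_\Si\|_{L^q(\Si\cap\mathbf{B}_r(z))}$ with an $L^{2q/(q-1)}$-norm of $\zeta g$; interpolating the latter between $L^2$ and $L^{2n/(n-2)}$ and choosing $q$ large enough then lets me absorb the top-order piece back into the left-hand side. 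This yields the reverse H\"older inequality
\begin{equation*}
\|f\|_{p\chi,r'}\le\Bigl(\f{C(p)}{(r-r')^2}\Bigr)^{1/p}\|f\|_{p,r},\qquad \chi=\f{n}{n-2},
\end{equation*}
for concentric radii $r'<r<\de/4$.

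\textbf{Iteration and descent.} Iterating the last display with $p_j=2\chi^j$ and geometric radii telescoping to $r/2$ produces $\|f\|_{\infty,r/2}\le C\|f\|_{2,r}$. To descend to an arbitrary $k\ge 1$, I would apply the standard interpolation $\|f\|_{2,r}\le\|f\|_{\infty,r}^{1-k/2}\|f\|_{k,r}^{k/2}$ (or $\|f\|_{k,r}$ directly when $k\ge 2$) and run the iteration on the nested radii $r_j=\f r2+\f r2\cdot 2^{-j}$, using Young's inequality at each step to absorb the $\|f\|_\infty$ factor into the previous iterate; passing to the limit after checking summability of the resulting geometric series yields $\|f\|_{\infty,r/2}\le c_{k,\k,\La}\|f\|_{k,r}$.

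The main technical obstacle I anticipate is the careful bookkeeping of the divergence term $\p_{x_n}\mathbf{h}$: it must be integrated by parts against $\sqrt{\det\hat g}$ and the resulting $p$-dependence tracked so that $C(p)^{1/p}$ stays summable over the dyadic iteration. The secondary difficulty is selecting $q$ large enough in the $|A_\Si|$-absorption so that the interpolation exponent lies strictly between $2$ and $2n/(n-2)$, which is granted by \eqref{W3q*} once $q>n$ is fixed in terms of $n$ alone; all other ingredients are routine.
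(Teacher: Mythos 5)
Your proposal is correct and follows essentially the same De Giorgi--Nash--Moser scheme as the paper: test \eqref{DeSibfw} against $\zeta^2 f^{p-1}$, pull the divergence term $\p_{x_n}\mathbf{h}$ back to the parametrizing ball $B_\de$ and integrate by parts against $\sqrt{\det\hat g}$ using $|\p_{x_n}\sqrt{\det\hat g}|\lesssim|A_\Si|$ and $|\mathbf{h}|\le cf$, absorb the curvature weight $\Psi$ via Sobolev, H\"older and the scale-invariant bound \eqref{W3q*}, and iterate dyadically. The only cosmetic difference is that the paper absorbs $\Psi$ by splitting $\Psi\le\ep^{-1}+\ep\Psi^2$ and using the $L^n$-bound on $\Psi$, whereas you interpolate $L^{2q/(q-1)}$ between $L^2$ and $L^{2n/(n-2)}$ for a fixed $q(n)$; these are equivalent. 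You also spell out the Bombieri--Giusti style descent from $\|f\|_{2,r}$ to $\|f\|_{k,r}$ for $1\le k<2$, which the paper instead handles by starting the iteration at the exponent $\ell_0=2k$ and deferring the remaining bookkeeping to the scheme in \cite{D0}; your version is a bit more self-contained but captures the same mechanism.
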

\begin{proof}
Let $\e$ be a nonnegative Lipschitz function on $\Si$ with compact support in $\Si\cap\mathbf{B}_{\de/2}(z)$. For $\ell\ge1$, from \eqref{DeSibfw}
\begin{equation}\aligned\label{2ell-1wdWe}
&(2\ell-1)\int_\Si f^{2\ell-2}|\na f|^2\e^2=-\int_\Si f^{2\ell-1}\e^2\De_\Si f-2\int_\Si f^{2\ell-1}\e\lan\na\e,\na f\ran\\
=&-\int_\Si f^{2\ell-1}\e^2\p_{x_n}\mathbf{h}-\int_\Si f^{2\ell}\e^2\Phi-2\int_\Si f^{2\ell-1}\e\lan\na\e,\na f\ran\\
=&\int_{\R^n}\p_{x_n}\left( f^{2\ell-1}\e^2\sqrt{\det \hat{g}}\right)\mathbf{h}-\int_\Si f^{2\ell}\e^2\Phi-2\int_\Si f^{2\ell-1}\e\lan\na\e,\na f\ran.
\endaligned
\end{equation}
Since $|D^2w|\le1$ on $B_\de$, we get $|\p_{x_n}\sqrt{\det \hat{g}}|\le c f|A_\Si|$ on $B_\de$.
From \eqref{hPhiB}\eqref{2ell-1wdWe}, we have
\begin{equation}\aligned
(2\ell-1)\int_\Si f^{2\ell-2}&|\na f|^2\e^2\le c(2\ell-1)\int_{\Si} f^{2\ell-1}|\na f|\e^2+c\int_\Si f^{2\ell}\e|\na\e|\\
&+c\int_\Si f^{2\ell+1}\e^2|A_\Si|-\int_\Si f^{2\ell}\e^2\Phi-2\int_\Si f^{2\ell-1}\e\lan\na\e,\na f\ran.
\endaligned
\end{equation}
With Cauchy-Schwartz inequality, we have
\begin{equation}\aligned\label{w2l-2CSI}
\f{2\ell-1}4\int_\Si f^{2\ell-2}&|\na f|^2\e^2\le c\ell\int_{\Si} f^{2\ell}\e^2+\f{c}{\ell}\int_\Si f^{2\ell}|\na\e|^2+c\int_\Si f^{2\ell}\e^2\Psi.
\endaligned
\end{equation}
Hence,
\begin{equation}\aligned
\int_\Si\left|\na( f^{\ell}\e)\right|^2\le& 2\ell^2\int_\Si f^{2\ell-2}|\na f|^2\e^2+2\int_\Si f^{2\ell}|\na\e|^2\\
\le& c\ell^2\int_{\Si} f^{2\ell}\e^2+c\int_\Si f^{2\ell}|\na\e|^2+c\ell\int_\Si f^{2\ell}\e^2\Psi.
\endaligned
\end{equation}
Suppose $n\ge3$.
With Sobolev inequality, for each $\ep\in(0,1]$ we have
\begin{equation}\aligned
&\left(\int_\Si( f^{\ell}\e)^{\f{2n}{n-2}}\right)^{\f{n-2}{n}}\le c\ell^2\int_{\Si} f^{2\ell}\e^2+c\int_\Si f^{2\ell}|\na\e|^2\\
&\qquad\qquad\qquad\qquad\quad+\f{4c^2\ell^2}{\ep}\int_\Si f^{2\ell}\e^2+\ep\int_\Si f^{2\ell}\e^2\Psi^2\\
&\le
\f{c\ell^2}\ep \int_{\Si} f^{2\ell}\e^2+c\int_\Si f^{2\ell}|\na\e|^2+\ep\left(\int_\Si( f^{\ell}\e)^{\f{2n}{n-2}}\right)^{\f{n-2}{n}}\left(\int_{\Si\cap\mathrm{supp}\e}\Psi^n\right)^{\f 2n}.
\endaligned
\end{equation}
From \eqref{W3q*} and the definition of $\Psi$, we fix a suitably small $\ep>0$ so that
\begin{equation}\aligned
\left(\int_\Si( f^{\ell}\e)^{\f{2n}{n-2}}\right)^{\f{n-2}{n}}\le c\ell^2\int_{\Si} f^{2\ell}\e^2+c\int_\Si f^{2\ell}|\na\e|^2.
\endaligned
\end{equation}
For $0<\tau\le r$,
let $\e$ be a Lipschitz function defined by $\e=1$ on $\mathbf{B}_{r}(z)$, $\e=0$ outside $\mathbf{B}_{r+\tau}(z)$ so that $|\bn\e|\le\f1{\tau}$ on $\mathbf{B}_{r+\tau}(z)\setminus \mathbf{B}_{r}(z)$. Then we have
\begin{equation}\aligned
\f1{r^2}\left(\fint_{\Si\cap \mathbf{B}_r(z)} f^{\f{2n\ell}{n-2}}\right)^{\f{n-2}{n}}\le c\ell^2\fint_{\Si\cap \mathbf{B}_{r+\tau}(z)} f^{2\ell}+c\tau^{-2}\fint_{\Si\cap \mathbf{B}_{r+\tau}(z)} f^{2\ell},
\endaligned
\end{equation}
which implies
\begin{equation}\aligned\label{ellrbfw}
\left(\fint_{\Si\cap \mathbf{B}_r(z)} f^{\f{2n\ell}{n-2}}\right)^{\f{n-2}{2n\ell}}\le c^{\f1{\ell}}\left(\ell r+\f r\tau\right)^{\f1{\ell}}\left(\fint_{\Si\cap \mathbf{B}_{r+\tau}(z)} f^{2\ell}\right)^{\f1{2\ell}}.
\endaligned
\end{equation}

Fix a constant $k\ge1$. Let $\ell_i=2k\left(\f n{n-2}\right)^i$, $\tau_i=2^{-i-2}r$, and $r_i=r-\sum_{j=0}^i\tau_j\ge r/2$.
Let $\e_i$ be a Lipschitz function defined by $\e_i=1$ on $\mathbf{B}_{r_i}(z)$, $\e_i=0$ outside $\mathbf{B}_{r_i+\tau_i}(z)$ so that $|\bn\e_i|\le\f1{\tau_i}$ on $\mathbf{B}_{r_i+\tau_i}(z)\setminus  \mathbf{B}_{r_i}(z)$.
From \eqref{ellrbfw}, we have
\begin{equation}\aligned
|| f||_{\ell_i,r_i}\le c^{\f1{2\ell_{i-1}}}\left(\ell_{i-1}r_i+\f{r_i}{\tau_i}\right)^{\f1{\ell_{i-1}}}|| f||_{\ell_{i-1},r_{i-1}}.
\endaligned
\end{equation}
Let $n_*=\max\{2,n/(n-2)\}$. Then
\begin{equation}\aligned
|| f||_{\ell_i,r_i}\le c^{\f1{2\ell_{i-1}}}(4+kr)^{\f1{2\ell_{i-1}}}n_*^{\f i{\ell_{i-1}}}|| f||_{\ell_{i-1},r_{i-1}}.
\endaligned
\end{equation}
Now we iterate the above inequality, and complete the proof of $n\ge3$ (see (4.7)-(4.10) in \cite{D0} for instance). For $n=2$, the index $\f{2n}{n-2}$ can be replaced by any large finite constant in the Sobolev inequality, and the above argument still works.
\end{proof}
We follow the proof in Theorem 4.2 in \cite{D0} step by step with (4.3) in \cite{D0} replaced by Lemma \ref{SubmeanV}, then immediately get the following result.
\begin{corollary}\label{MVsub}
For any $q>0$, there is a constant $c_{q}$ depending only on $n,q$ so that
\begin{equation}\aligned
|| f||_{\infty,r/2}\le c_{q}|| f||_{q,r}\qquad \mathrm{for\ each}\ 0<r<\de/4.
\endaligned
\end{equation}
\end{corollary}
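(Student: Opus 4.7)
The plan is to reduce the corollary to the $q\ge1$ case supplied by Lemma \ref{SubmeanV} via the classical Moser interpolation trick; this is exactly the template of Theorem~4.2 of \cite{D0}, with Lemma \ref{SubmeanV} substituted for (4.3) of \cite{D0}. I fix $k=1$ as the base sub-mean-value exponent and work with two radii.

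First, by a standard covering argument I upgrade Lemma \ref{SubmeanV} to a two-radius form: for any $r/2\le\rho<\tau\le r$ with $r<\de/4$,
\begin{equation*}
\sup_{\Si\cap\mathbf{B}_\rho(z)}f\le \f{C}{(\tau-\rho)^{n}}\int_{\Si\cap\mathbf{B}_\tau(z)}f,
\end{equation*}
obtained by covering $\Si\cap\mathbf{B}_\rho(z)$ with finitely many balls of radius $(\tau-\rho)/4$ centered at points of $\Si$ and applying Lemma \ref{SubmeanV} at each (its proof uses only local cut-offs and the Sobolev inequality on $\Si$, so it remains valid when recentered at any nearby point of $\Si\cap\mathbf{B}_{\de/4}$). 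Next, for any $q\in(0,1)$, I insert the elementary pointwise interpolation $f\le(\sup f)^{1-q}f^q$ into the right-hand side and apply Young's inequality to absorb the factor $(\sup f)^{1-q}$. Writing $\phi(s):=\sup_{\Si\cap\mathbf{B}_s(z)}f$, the result is a self-improving inequality
\begin{equation*}
\phi(\rho)\le\f12\phi(\tau)+\f{C_q}{(\tau-\rho)^{n/q}}\left(\int_{\Si\cap\mathbf{B}_r(z)}f^q\right)^{1/q}
\end{equation*}
valid for every $r/2\le\rho<\tau\le r$.

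Finally, I invoke the standard absorption/iteration lemma (if $\phi(\rho)\le\f12\phi(\tau)+A(\tau-\rho)^{-\a}$ holds throughout $[r/2,r]$, then $\phi(r/2)\le C_\a Ar^{-\a}$) and renormalize the right-hand integral by $\mathcal{H}^n(\Si\cap\mathbf{B}_r(z))$, which is comparable to $r^n$ thanks to the almost monotonicity formula \eqref{almostmonoto} together with the volume bound \eqref{VolGDur}. This produces exactly $\|f\|_{\infty,r/2}\le c_q\|f\|_{q,r}$. I do not foresee any serious obstacle: the argument is essentially mechanical, and the only care needed is to track volume factors through the covering step and to verify that Young's inequality can indeed absorb the $\sup$ factor with constants depending only on $n,q$, both of which are routine and proceed exactly as in \cite{D0}.
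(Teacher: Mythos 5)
Your proposal is correct and is essentially the same argument the paper invokes: the paper simply states that Corollary \ref{MVsub} follows by running Theorem 4.2 of \cite{D0} with Lemma \ref{SubmeanV} in place of (4.3) there, and that theorem is precisely the Moser interpolation-plus-absorption device you spell out (two-radius sub-mean value inequality, $f\le(\sup f)^{1-q}f^q$, Young's inequality, and the standard iteration lemma to absorb the $\frac12\sup$ term, finishing with the density bound $\mathcal{H}^n(\Si\cap\mathbf{B}_r(z))\sim r^n$). The only cosmetic remark is that for $q\ge1$ the conclusion is already contained in Lemma \ref{SubmeanV} itself, so the interpolation step is only needed for $q\in(0,1)$, as you note.
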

Moreover, we derive another mean value inequality, which will be used later.
\begin{lemma}
There is a constant $c$ depending only on $n$ so that
\begin{equation}\aligned
|| f^{-\si}||_{\infty,r/2}\le c|| f^{-\si}||_{1,r}\qquad \mathrm{for\ each}\ 0<r<\de/4,\ \si\in(0,1].
\endaligned
\end{equation}
\end{lemma}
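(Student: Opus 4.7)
My plan is to run a Moser iteration directly on $g := f^{-\sigma}$, to start it at the exponent $\ell_0 = 1/2$ so that the initial norm on the right is exactly $\|g\|_{1,r}$, and to use the restriction $\sigma \in (0,1]$ to make every constant depend only on $n$. By Lemma \ref{wnnge0}, $f = w_{nn} > 0$ on $B_\de$, so $g$ is smooth and positive, and the chain rule gives
\[
\Delta_\Sigma g = -\sigma f^{-\sigma-1}\Delta_\Sigma f + \sigma(\sigma+1) f^{-\sigma-2}|\nabla f|^2.
\]
The identity $|\nabla f|^2 = \sigma^{-2}f^{2\sigma+2}|\nabla g|^2$ (from $\nabla g = -\sigma f^{-\sigma-1}\nabla f$) makes the nonnegative second-order term quadratic in $\nabla g$, which produces the crucial $\sigma$-dependent coefficient below.

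Testing the equation against $g^{2\ell-1}\eta^2$ for $\ell \ge 1/2$ and a nonnegative cutoff $\eta$ with $\mathrm{spt}\,\eta \subset \Sigma\cap\mathbf{B}_r(z)$, integrating by parts on $\Sigma$, moving the nonnegative second-order term to the left-hand side, and substituting $\Delta_\Sigma f = \partial_{x_n}\mathbf{h} + \Phi f$ from \eqref{DeSibfw}, I will obtain
\[
\Bigl(2\ell + \tfrac{1}{\sigma}\Bigr)\int g^{2\ell-2}|\nabla g|^2\eta^2 = \sigma\int g^{2\ell-1}\eta^2 f^{-\sigma-1}\partial_{x_n}\mathbf{h} + \sigma\int g^{2\ell}\eta^2\Phi - 2\int g^{2\ell-1}\eta\langle\nabla g,\nabla\eta\rangle.
\]
The enhancement $(2\ell-1) + (\sigma+1)/\sigma = 2\ell + 1/\sigma \ge 2\ell + 1$ for $\sigma\in(0,1]$ is the decisive identity. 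The $\partial_{x_n}\mathbf{h}$ integral is transferred to a Euclidean one via $d\mu_\Sigma = \sqrt{\det\hat{g}}\,dx$ and integrated by parts in $x_n$, exactly as in the proof of Lemma \ref{SubmeanV}. Using $|\mathbf{h}|\le cf$, $|\Phi|\le c\Psi$, $|\partial_{x_n}\sqrt{\det\hat{g}}|\le cf|A_\Sigma|\sqrt{\det\hat{g}}$, $f\le 1$, and the pointwise identity $f^{-\sigma-1}|\partial_{x_n}f| = \sigma^{-1} g|\partial_{x_n}g|$, every $|\nabla g|$-carrying term on the right acquires coefficient at most $C(2\sigma\ell+1)$; Young's inequality with weight proportional to $(2\ell+1/\sigma)/6$ absorbs all of them into the good left-hand-side term, producing the Caccioppoli bound
\[
\int|\nabla g^\ell|^2\eta^2 \le C\ell^{3}\int g^{2\ell}\eta^2 + C\ell\int g^{2\ell}|\nabla\eta|^2 + C\ell\int g^{2\ell}\eta^2\Psi,
\]
with $C = C(n)$ independent of $\sigma$.

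Combining this with the Sobolev inequality on $\Sigma$ (which holds with a purely dimensional constant, since \eqref{W3q} realises $\Sigma\cap\mathbf{B}_\de$ as a near-flat $C^{2,\alpha}$ graph over an $n$-plane), and handling the $\Psi$-term by H\"older, by the interpolation $\|v\eta\|_{L^{2n/(n-1)}}^2 \le \|v\eta\|_{L^{2n/(n-2)}}\|v\eta\|_{L^2}$, and by the uniform bound $\|\Psi\|_{L^n(\Sigma\cap\mathbf{B}_r(z))}\le C(n)$ from \eqref{W3q*}, I absorb the $L^{2n/(n-2)}$-part into the left, leaving
\[
\|g^\ell\eta\|_{L^{2n/(n-2)}}^2 \le C\ell^{3}\Bigl(\int g^{2\ell}\eta^2 + \int g^{2\ell}|\nabla\eta|^2\Bigr).
\]
A standard Moser iteration with $\ell_k = \tfrac{1}{2}\bigl(\tfrac{n}{n-2}\bigr)^k$ and nested cutoffs $\eta_k$ on $\mathbf{B}_{r_k}(z)$ with $r_k\searrow r/2$ then converges, exactly as at the end of Lemma \ref{SubmeanV}, to $\|g\|_{\infty, r/2}\le c\|g\|_{1, r}$ with $c = c(n)$; the case $n = 2$ is handled by replacing $2n/(n-2)$ with an arbitrary large finite exponent. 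The main obstacle is the $\sigma$-uniformity: after Young's inequality the dangerous cross term carries the factor $(2\sigma\ell+1)^2/(2\ell+1/\sigma)$, and one needs this quantity to be $O(\ell)$ for \emph{every} $\sigma\in(0,1]$. This is secured by the monotonicities $2\sigma\ell+1 \le 2\ell+1$ and $2\ell+1/\sigma \ge 2\ell+1$, and is precisely the arithmetic reason to formulate the lemma under the constraint $\sigma \le 1$.
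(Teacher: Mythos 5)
Your proof is correct, and it takes a route that differs from the paper's in one genuine respect. In \eqref{DemfSge} the paper \emph{discards} the nonnegative term $\sigma(\sigma+1)f^{-\sigma-2}|\nabla f|^2$ to obtain a one-sided differential inequality for $\mathfrak{S}=f^{-\sigma}$, tests against $\mathfrak{S}^{2\ell-1}\e^2$, and then ``follows the argument from \eqref{w2l-2CSI} in Lemma \ref{SubmeanV}.'' That iteration needs $2\ell-1>0$ throughout, so in effect it delivers $L^\infty$ control from an $L^q$ norm with $q\ge 2$, and the descent to $q=1$ relies on the same mechanism as Corollary \ref{MVsub}. You instead \emph{keep} the dropped term: the substitution $|\nabla f|^2=\sigma^{-2}f^{2\sigma+2}|\nabla g|^2$ converts it to $\tfrac{\sigma+1}{\sigma}g^{-1}|\nabla g|^2$, which after testing contributes $\tfrac{\sigma+1}{\sigma}\int g^{2\ell-2}|\nabla g|^2\eta^2$ to the left, raising the good coefficient from $2\ell-1$ to $2\ell+1/\sigma$. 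Because this stays $\ge 1+1/\sigma\ge 2$ already at $\ell=1/2$, you can legitimately start the Moser chain at $q_0=2\ell_0=1$ and reach $\|g\|_1$ directly, with no separate interpolation step. I checked the algebra: testing $\Delta_\Sigma g=-\sigma f^{-\sigma-1}\Delta_\Sigma f+\tfrac{\sigma+1}{\sigma}g^{-1}|\nabla g|^2$ against $g^{2\ell-1}\eta^2$ does yield
\begin{equation*}
\Bigl(2\ell+\tfrac1\sigma\Bigr)\int g^{2\ell-2}|\nabla g|^2\eta^2=\sigma\int g^{2\ell-1}\eta^2 f^{-\sigma-1}\p_{x_n}\mathbf{h}+\sigma\int g^{2\ell}\eta^2\Phi-2\int g^{2\ell-1}\eta\lan\nabla g,\nabla\eta\ran,
\end{equation*}
and after the $x_n$-integration by parts (with $g^{2\ell-1}f^{-\sigma-1}=f^{-2\sigma\ell-1}$, $|\mathbf{h}|\le cf$, $f\le 1$) the $|\nabla g|$-carrying term indeed carries $c(2\sigma\ell+1)$, exactly as in the paper's display. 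Your arithmetic point $(2\sigma\ell+1)^2/(2\ell+1/\sigma)\le(2\ell+1)^2/(2\ell+1)=2\ell+1$ for $\sigma\le1$ is the same reason the paper has $(2\sigma\ell+1)^2/(2\ell-1)=O(\ell)$, so the restriction $\sigma\in(0,1]$ enters both arguments identically. Your Caccioppoli constant $C\ell^3$ is a harmless overestimate (I get $O(\ell^2)$), and the Sobolev step with \eqref{W3q*} is handled in the same way as the paper. Net: same computation at the core, but the refinement of retaining the good term lets you avoid the extra $L^2\!\to\!L^1$ step the paper leans on, which is a modest simplification.
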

\begin{proof}
Given the constant $\si\in(0,1]$, let $\mathfrak{S}= f^{-\si}$. Then $|\mathbf{h}|\le c\mathfrak{S}^{-1/\si}$ and
\begin{equation}\aligned\label{DemfSge}
&\De_\Si\mathfrak{S}=-\si f^{-\si-1}\De_\Si f+\si(\si+1) f^{-\si-2}|\na f|^2\\
=&-\si f^{-\si-1}\p_{x_n}\mathbf{h}-\si f^{-\si}\Phi+\si(\si+1) f^{-\si-2}|\na f|^2\\
\ge&-\si\mathfrak{S}^{1+1/\si}\p_{x_n}\mathbf{h}-\si\mathfrak{S}\Phi.
\endaligned
\end{equation}
Let $\e$ be a nonnegative Lipschitz function on $\Si$ with compact support in $\Si\cap\mathbf{B}_{\de/2}(z)$. For $\ell\ge1$, from \eqref{DemfSge}
\begin{equation}\aligned
&(2\ell-1)\int_\Si\mathfrak{S}^{2\ell-2}|\na\mathfrak{S}|^2\e^2=-\int_\Si\mathfrak{S}^{2\ell-1}\e^2\De_\Si\mathfrak{S}-2\int_\Si\mathfrak{S}^{2\ell-1}\e\lan\na\e,\na\mathfrak{S}\ran\\
\le&-\si\int_\Si\mathfrak{S}^{2\ell+1/\si}\e^2\p_{x_n}\mathbf{h}-\si\int_\Si\mathfrak{S}^{2\ell}\e^2\Phi-2\int_\Si\mathfrak{S}^{2\ell-1}\e\lan\na\e,\na\mathfrak{S}\ran\\
=&\si\int_{\R^n}\p_{x_n}\left(\mathfrak{S}^{2\ell+1/\si}\e^2\sqrt{\det \hat{g}}\right)\mathbf{h}-\si\int_\Si\mathfrak{S}^{2\ell}\e^2\Phi-2\int_\Si\mathfrak{S}^{2\ell-1}\e\lan\na\e,\na\mathfrak{S}\ran.
\endaligned
\end{equation}
From $|\p_{x_n}\sqrt{\det \hat{g}}|\le  c|A_\Si|$ and $|\mathbf{h}|\le c\mathfrak{S}^{-1/\si}$, we have
\begin{equation}\aligned
(2\ell-1)\int_\Si\mathfrak{S}^{2\ell-2}&|\na\mathfrak{S}|^2\e^2\le c(2\ell\si+1)\int_{\Si}\mathfrak{S}^{2\ell-1}|\na\mathfrak{S}|\e^2+c\int_\Si\mathfrak{S}^{2\ell}\e|\na\e|\\
&+c\int_\Si\mathfrak{S}^{2\ell}\e^2|A_\Si|-\si\int_\Si\mathfrak{S}^{2\ell}\e^2\Phi-2\int_\Si\mathfrak{S}^{2\ell-1}\e\lan\na\e,\na\mathfrak{S}\ran.
\endaligned
\end{equation}
Then we follow the argument from \eqref{w2l-2CSI} in Lemma \ref{SubmeanV}, and complete the proof.
\end{proof}

Let us prove a mean value inequality for 'superharmonic' functions.
\begin{lemma}\label{MVsup}
There are constants $c,\si_*>0$ depending only on $n$ so that
\begin{equation}\aligned
\fint_{\Si\cap\mathbf{B}_\r(z)} f^{\si_*}\le c f^{\si_*}(z)\qquad \mathrm{for\ each}\ 0<\r<\de/8.
\endaligned
\end{equation}
\end{lemma}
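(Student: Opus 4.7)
The plan is to derive a reverse mean value inequality for $f$ by first showing that $\log f$ has bounded BMO seminorm on $\Si$ and then combining this with the mean value inequality for $f^{-\si}$ just proved. The guiding heuristic is Moser's weak Harnack argument: once $\log f$ is in a John--Nirenberg class, small positive and negative moments of $f$ are comparable, while the previous lemma pins down the negative moment in terms of the pointwise value $f(z)$.

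I would first establish a Caccioppoli estimate for $\log f$. Testing the equation \eqref{DeSibfw} against $\e^2/f$ (for a nonnegative Lipschitz cutoff $\e$) and integrating by parts on $\Si$ gives
\begin{equation*}
\int_\Si\e^2|\na\log f|^2=\int_\Si\f{\e^2}{f}\p_{x_n}\mathbf{h}+\int_\Si\e^2\Phi+2\int_\Si\e\lan\na\e,\na\log f\ran.
\end{equation*}
For the $\p_{x_n}\mathbf{h}$ term I would, exactly in the manner of Lemma \ref{SubmeanV}, rewrite the integral over $\R^n$ against $\sqrt{\det\hat g}\,dx$ and integrate by parts in $x_n$; the boundary-free remainders are controlled using $|\mathbf{h}|\le cf$, $|\p_{x_n}\sqrt{\det\hat g}|\le cf|A_\Si|$, and $|\Phi|\le c\Psi$. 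Absorbing terms with $|\na\log f|$ via Cauchy--Schwartz and invoking the $L^q$ bound \eqref{W3q*} on $|A_\Si|$ with a standard cutoff supported in $\mathbf{B}_\r(z)$ yields
\begin{equation*}
\r^2\fint_{\Si\cap\mathbf{B}_{\r/2}(z)}|\na\log f|^2\le c.
\end{equation*}

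Since $\Si\cap\mathbf{B}_\de$ is a $C^{1,\a}$ graph with uniformly bounded geometry thanks to \eqref{W3q}, the Poincar\'e inequality holds on $\Si$-balls with dimensional constants, and the Caccioppoli bound upgrades to a BMO bound for $\log f$ on $\Si\cap\mathbf{B}_{\r/2}(z)$. By the John--Nirenberg inequality (equivalently, by iterating the Caccioppoli estimate on powers of $|\log f-(\log f)_{\mathbf{B}_\r(z)}|$) there exists $\si_*\in(0,1]$ depending only on $n$ with
\begin{equation*}
\left(\fint_{\Si\cap\mathbf{B}_\r(z)}f^{\si_*}\right)\left(\fint_{\Si\cap\mathbf{B}_\r(z)}f^{-\si_*}\right)\le c.
\end{equation*}

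Finally, applying the previous lemma with $\si=\si_*$ gives $f(z)^{-\si_*}\le\sup_{\Si\cap\mathbf{B}_{\r/2}(z)}f^{-\si_*}\le c\fint_{\Si\cap\mathbf{B}_\r(z)}f^{-\si_*}$, so $\fint f^{-\si_*}\ge c^{-1}f(z)^{-\si_*}$; combining with the previous display yields the desired bound $\fint_{\Si\cap\mathbf{B}_\r(z)}f^{\si_*}\le cf(z)^{\si_*}$. The main technical obstacle is the careful treatment of $\int_\Si\e^2/f\cdot\p_{x_n}\mathbf{h}$: since it is not a tangential divergence on $\Si$, the integration by parts must be performed in Euclidean coordinates, and all remainder terms involving derivatives of $\sqrt{\det\hat g}$ must be absorbed by $|\na\log f|^2$ or controlled through $\Psi$ in the same spirit as Lemma \ref{SubmeanV} and the preceding negative-power lemma.
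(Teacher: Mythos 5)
Your proposal is correct and follows essentially the same route as the paper: a Caccioppoli estimate for $\log f$ obtained by integrating by parts in $x_n$ and absorbing the $\Phi$ and $|A_\Si|$ remainders via \eqref{W3q*}, then a BMO bound, then exponential integrability, and finally the pairing with the preceding $f^{-\si}$ mean-value lemma. The only difference is cosmetic: you invoke John--Nirenberg as a black box (valid here since the bounded geometry in \eqref{W3q} gives doubling and Poincar\'e with constants depending only on $n$), whereas the paper carries out the Moser-type iteration on $|\phi|^q$ explicitly to reach $\fint e^{\si_*\phi}\fint e^{-\si_*\phi}\le c$ --- an alternative you yourself note as equivalent.
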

\begin{proof}
Let 
$$\phi:=\log f-c_{ f}\quad \mathrm{with}\quad c_{ f}=\fint_{\Si\cap\mathbf{B}_\r(z)}\log f.$$
Then $ f=e^{\phi+c_{ f}}$, and
\begin{equation}\aligned
\De_\Si \phi=\f{\De_\Si f}{ f}-\f{|\na f|^2}{ f^2}=\f{\p_{x_n}\mathbf{h}}{ f}+\Phi-|\na \phi|^2=e^{-\phi-c_{ f}}\p_{x_n}\mathbf{h}+\Phi-|\na \phi|^2.
\endaligned
\end{equation}
Let $\xi$ be a Lipschitz function on $\Si$ with compact support in $\Si\cap\mathbf{B}_{\de/2}(z)$ to be defined later. Noting $|\mathbf{h}|\le c f=c e^{\phi+c_{ f}}$.
Using Cauchy-Schwartz inequality we have
\begin{equation}\aligned
&\int_\Si\xi^2\left(|\na \phi|^2-\Phi\right)=\int_\Si\xi^2e^{-\phi-c_{ f}}\p_{x_n}\mathbf{h}-\int_\Si\xi^2\De_\Si \phi\\
=&-\int_{\R^n}\p_{x_n}\left(\xi^2e^{-\phi-c_{ f}}\sqrt{\det \hat{g}}\right)\mathbf{h}+\int_\Si\lan\na \phi,\na\xi^2\ran\\
\le&c\int_\Si\xi|\na\xi|+c\int_\Si\xi^2|\na \phi|+c\int_\Si\xi^2|A_\Si|+2\int_\Si\xi\lan\na \phi,\na\xi\ran\\
\le&c\int_\Si(\xi^2+|\na\xi|^2)+\f14\int_\Si|\na \phi|^2\xi^2+c\int_\Si\xi^2|A_\Si|+\f14\int_\Si\xi^2|\na \phi|^2+\int_\Si|\na\xi|^2.
\endaligned
\end{equation}
This implies
\begin{equation}\aligned
\int_\Si|\na \phi|^2\xi^2
\le c &\int_\Si(\xi^2+|\na\xi|^2)+c\int_\Si\xi^2\Psi.
\endaligned
\end{equation}
For $0<\r<\de/8$,
let $\xi$ be a Lipschitz function defined by $\xi=1$ on $\mathbf{B}_{3\r/2}(z)$, $\xi=0$ outside $\mathbf{B}_{2\r}(z)$ so that $|\bn\xi|\le\f2\r$ on $\mathbf{B}_{2\r}(z)\setminus \mathbf{B}_{3\r/2}(z)$. Then with \eqref{W3q*} we have
\begin{equation}\aligned\label{Dphi|2n-2}
\int_{\Si\cap \mathbf{B}_{3\r/2}(z)}|\na \phi|^2\le c\int_{\Si\cap \mathbf{B}_{2\r}(z)}(1+4\r^{-2})+c\int_{\Si\cap \mathbf{B}_{2\r}(z)}\Psi\le c\r^{n-2}.
\endaligned
\end{equation}
From Neumann-Poincar\'e inequality and the definition of $\phi$, we get
\begin{equation}\aligned
\fint_{\Si\cap \mathbf{B}_{3\r/2}(z)}|\phi|^2\le c\r^2\fint_{\Si\cap \mathbf{B}_{3\r/2}(z)}|\na \phi|^2\le c.
\endaligned
\end{equation}

For $q\ge1$, using Cauchy-Schwartz inequality we have
\begin{equation}\aligned\label{Bonn1}
&\int_\Si|\phi|^q\xi^2\left(|\na \phi|^2-\Phi\right)=\int_\Si|\phi|^q\xi^2e^{-\phi-c_{ f}}\p_{x_n}\mathbf{h}-\int_\Si|\phi|^q\xi^2\De_\Si \phi\\
=&-\int_{\R^n}\p_{x_n}\left(|\phi|^q\xi^2e^{-\phi-c_{ f}}\sqrt{\det \hat{g}}\right)\mathbf{h}+\int_\Si\lan\na \phi,\na(|\phi|^q\xi^2)\ran\\
\le&c q\int_\Si|\phi|^{q-1}|\na \phi|\xi^2+c\int_\Si|\phi|^{q}\xi|\na\xi|+c\int_\Si|\phi|^{q}\xi^2|\na \phi|+c\int_\Si|\phi|^{q}\xi^2|A_\Si|\\
&+2\int_\Si|\phi|^q\xi\lan\na \phi,\na\xi\ran+q\int_\Si|\phi|^{q-1}\xi^2|\na \phi|^2\\
\le&c q\int_\Si|\phi|^{q-1}(1+|\na \phi|^2)\xi^2+c\int_\Si|\phi|^q(\xi^2+|\na\xi|^2)+\f14\int_\Si|\phi|^q|\na \phi|^2\xi^2\\
&+c\int_\Si|\phi|^{q}\xi^2|A_\Si|+\f14\int_\Si|\phi|^q\xi^2|\na \phi|^2+\int_\Si|\phi|^q|\na\xi|^2+q\int_\Si|\phi|^{q-1}|\na \phi|^2\xi^2.
\endaligned
\end{equation}
This implies
\begin{equation*}\aligned
\int_\Si|\phi|^q|\na \phi|^2\xi^2\le c &q\int_\Si|\phi|^{q-1}(1+|\na \phi|^2)\xi^2+c \int_\Si|\phi|^{q}(\xi^2+|\na\xi|^2)+c\int_\Si|\phi|^{q}\xi^2\Psi.
\endaligned
\end{equation*}
From the Young's inequality: $q|\phi|^{q-1}\le\f{q-1}q|\phi|^q+q^{q-1}$ and
\begin{equation}\aligned
c q|\phi|^{q-1}\le&\f{\left((\f q{2(q-1)})^{\f{q-1}q}|\phi|^{q-1}\right)^{\f q{q-1}}}{\f q{q-1}}+\f{\left(c q(\f {2(q-1)}q)^{\f{q-1}q}\right)^{q}}{q}\\
=&\f12|\phi|^q+c^{q}(2(q-1))^{q-1},
\endaligned
\end{equation}
we get
\begin{equation}\aligned
\int_\Si|\phi|^{q}|\na \phi|^2\xi^2\le& c\int_\Si|\phi|^{q}(\xi^2+|\na\xi|^2)+c q^{q-1}\int_\Si\xi^2\\
&+c^q q^{q}\int_\Si|\na \phi|^2\xi^2+c\int_\Si|\phi|^{q}\xi^2\Psi.
\endaligned
\end{equation}
By Cauchy-Schwartz inequality, we further have
\begin{equation}\aligned
&\int_\Si|\phi|^{q}|\na \phi|\xi^2\le \f {\r}4\int_\Si|\phi|^{q}|\na \phi|^2\xi^2+\f1{\r}\int_\Si|\phi|^{q}\xi^2\\
\le& c\int_\Si|\phi|^{q}(\r^{-1}\xi^2+\r|\na\xi|^2)+c q^{q-1}\int_\Si\xi^2+c^q q^{q}\r\int_\Si|\na \phi|^2\xi^2+c\int_\Si|\phi|^{q}\xi^2\Psi.
\endaligned
\end{equation}
With Sobolev inequality, for all $q\ge2$ 
\begin{equation}\aligned
\left(\int_\Si|\phi^{q}\xi^2|^{\f{n}{n-1}}\right)^{\f{n-1}{n}}\le& c\int_\Si|\na(\phi^{q}\xi^2)|\le cq\int_\Si|\phi|^{q-1}|\na \phi|\xi^2+2c\int_\Si|\phi|^{q}|\na \xi|\xi\\
\le& c q\int_\Si|\phi|^{q-1}(\r^{-1}\xi^2+\r|\na\xi|^2)+c q^{q}\int_\Si\xi^2\\
&+c^q q^{q}\r\int_\Si|\na \phi|^2\xi^2+c q\int_\Si|\phi|^{q-1}\xi^2\Psi+c\int_\Si|\phi|^{q}|\na \xi|\xi.
\endaligned
\end{equation}
For each $\ep\in(0,1]$ and $\tau>0$, there holds
\begin{equation}\aligned
c q|\phi|^{q-1}\tau\le&\f{\left(\ep^{\f{q-1}q}|\phi|^{q-1}\tau\right)^{\f q{q-1}}}{\f q{q-1}}+\f{\left(\ep^{\f{1-q}q}c q\right)^{q}}{q}=\ep\f{q-1}q|\phi|^{q}\tau^{\f{q}{q-1}}+\ep^{1-q}c^q q^{q-1},
\endaligned
\end{equation}
then with Cauchy-Schwartz inequality we have
\begin{equation}\aligned
\left(\int_\Si|\phi^{q}\xi^2|^{\f{n}{n-1}}\right)^{\f{n-1}{n}}
\le & c\int_\Si|\phi|^{q}(\r^{-1}\xi^2+\r|\na\xi|^2)+c^q q^{q}\int_\Si(\r^{-1}\xi^2+\r|\na\xi|^2)\\
&+c^q q^{q}\r\int_\Si|\na \phi|^2\xi^2+\ep\int_\Si|\phi|^{q}\xi^2\Psi^{\f{q}{q-1}}+\ep^{1-q}c^q q^{q}\int_\Si\xi^2.
\endaligned
\end{equation}
From \eqref{W3q*} and H\"older inequality for $\int_\Si|\phi|^{q}\xi^2\Psi^{\f{q}{q-1}}$ with the suitably small (fixed) $\ep>0$, we get
\begin{equation}\aligned\label{phiqxinn-1d}
\left(\int_\Si|\phi^{q}\xi^2|^{\f{n}{n-1}}\right)^{\f{n-1}{n}}
\le& c \int_\Si|\phi|^{q}(\r^{-1}\xi^2+\r|\na\xi|^2)\\
&+c^q q^{q}\int_\Si\left(\r|\na \phi|^2+\r^{-1})\xi^2+\r|\na\xi|^2\right).
\endaligned
\end{equation}

Let $\r_j=(1+2^{-j-1})\r$ for each integer $j\ge0$.
Let $\xi_j$ be a cut-off function on $\R^n\times\R^n$ defined by $\xi_j=1$ on $\mathbf{B}_{\r_{j+1}}(z)$, $\xi_j=\f {\r_{j}-|\mathbf{x}|}{\r_{j}-\r_{j+1}}$ on $\mathbf{B}_{\r_{j}}(z)\setminus \mathbf{B}_{\r_{j+1}}(z)$,
$\xi_j=0$ outside $\mathbf{B}_{\r_j}(z)$.
Then $|\bn\xi_j|\le 2^{j+2}/\r$. From \eqref{Dphi|2n-2} and \eqref{phiqxinn-1d}, one has
\begin{equation}\aligned
\left(\int_{\Si\cap \mathbf{B}_{\r_{j+1}}(z)}|\phi|^{\f{nq}{n-1}}\right)^{\f{n-1}{n}}
\le \f{2^{2j}c}{2\r} \int_{\Si\cap \mathbf{B}_{\r_j}(z)}|\phi|^{q}+c^q q^{q}2^{2j}\r^{n-1}.
\endaligned
\end{equation}
Let $q_j=2(\f{n}{n-1})^j$ for each integer $j\ge0$. Then 
\begin{equation}\aligned
||\phi||_{q_{j+1},\r_{j+1}}^{q_{j}}\le 2^{2j}\left(c||\phi||_{q_{j},\r_{j}}^{q_{j}}+c^{q_j}\, q_j^{q_j}\right),
\endaligned
\end{equation}
which implies
\begin{equation}\aligned\label{||phi||j}
||\phi||_{q_{j+1},\r_{j+1}}\le 2^{\f{2j}{q_j}}c^{\f1{q_j}}||\phi||_{q_{j},\r_{j}}+c q_j.
\endaligned
\end{equation}
Let $a_0=||\phi||_{2,\r_0}/2$, and 
$$a_j=\f{||\phi||_{q_j,\r_j}}{q_j}\prod_{0\le i\le j-1}2^{-\f{2i}{q_i}}c^{-\f1{q_i}}\qquad\mathrm{for\ each}\ j\ge1,$$
then \eqref{||phi||j} implies
\begin{equation}\aligned
a_{j+1}\le\f{1}{q_{j+1}}\left(2^{\f{2j}{q_j}}c^{\f1{q_j}}||\phi||_{q_{j},\r_{j}}+cq_j\right)\prod_{0\le i\le j}2^{-\f{2i}{q_i}}c^{-\f1{q_i}}\le \left(1-\f1n\right)a_j+c.
\endaligned
\end{equation}
By iteration,
\begin{equation}\aligned
&a_{j+1}-nc\le  \left(1-\f1n\right)(a_j-nc)\le\cdots\le\left(1-\f1n\right)^{j+1}(a_0-nc).
\endaligned
\end{equation}
Since $\prod_{i=0}^{\infty}2^{-\f{2i}{q_i}}c^{-\f1{q_i}}>0$, we get
$$||\phi||_{q_j,\r_j}\le cq_j.$$
Noting we have allowed that $c$ may change from line to line, but $c$ depends only on $n$.
For each integer $k\ge 1$, there is an integer $j_k\ge 0$ such that $q_{j_k}\le k\le q_{j_{k+1}}$.
Noting $\r_j\ge\r$.
With H\"older inequality, we have
\begin{equation}\aligned
||\phi||_{k,\r}\le||\phi||_{k,\r_{j_k+1}}\le||\phi||_{q_{j_k+1},\r_{j_k+1}}\le cq_{j_k+1}\le \f {nc}{n-1}k.
\endaligned
\end{equation}
By Taylor's expansion (for $e^t$), there is a constant $\si_*\in(0,1)$ depending only on $n$ so that (see (3.32)-(3.35) in \cite{D1} for instance)
\begin{equation}\aligned
\fint_{\Si\cap\mathbf{B}_\r(z)} f^{\si_*}\fint_{\Si\cap\mathbf{B}_\r(z)} f^{-\si_*}=\fint_{\Si\cap\mathbf{B}_\r(z)}e^{\si_*\phi}\fint_{\Si\cap\mathbf{B}_\r(z)}e^{-\si_*\phi}\le c.
\endaligned
\end{equation}
From Lemma \ref{MVsup}, we get
\begin{equation}\aligned
 f^{-\si_*}(z)\fint_{\Si\cap\mathbf{B}_\r(z)} f^{\si_*}\le c\fint_{\Si\cap\mathbf{B}_\r(z)} f^{\si_*}\fint_{\Si\cap\mathbf{B}_\r(z)} f^{-\si_*}\le c^2,
\endaligned
\end{equation}
which implies
\begin{equation}\aligned
\fint_{\Si\cap\mathbf{B}_\r(z)} f^{\si_*}\le c^2 f^{\si_*}(z).
\endaligned
\end{equation}
This completes the proof.
\end{proof}

Combining Corollary \ref{MVsub} and Lemma \ref{MVsup}, we get the following Harnack's inequality. 
\begin{theorem}\label{Harnack}
There is a constant $c>0$ depending only on $n$ so that
\begin{equation}\aligned
\sup_{\Si\cap\mathbf{B}_r(z)} f\le c\inf_{\Si\cap\mathbf{B}_r(z)} f\qquad\qquad \mathrm{for\ each}\ 0<r<\de/8,\ z\in\Si\cap\mathbf{B}_{\de/4}.
\endaligned
\end{equation}
\end{theorem}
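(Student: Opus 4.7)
The plan is to combine Corollary \ref{MVsub} (supremum controlled by a low $L^q$-mean) with Lemma \ref{MVsup} (the weak one-sided mean value inequality at exponent $\sigma_*$) in the classical Moser fashion. Fix $z \in \Si\cap\mathbf{B}_{\delta/4}$ and $0<r<\delta/8$, and let $z_* \in \overline{\Si\cap\mathbf{B}_r(z)}$ be a point at which $f$ attains (or nearly attains) its infimum $m$ over $\Si\cap\mathbf{B}_r(z)$. The point is that both Corollary \ref{MVsub} and Lemma \ref{MVsup} have been proved with a fixed but arbitrary base point, and the proofs are translation-invariant in the sense that they apply at any $z_*$ sitting far enough inside the region on which $\Si$ is controlled by Allard's theorem.

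Choose a radius $\rho$ comparable to $r$ with $2\rho<\delta/8$, so that both auxiliary results apply at center $z_*$ on the ball $\mathbf{B}_{2\rho}(z_*)$. Lemma \ref{MVsup} at $z_*$ with radius $2\rho$ yields
\begin{equation*}
\fint_{\Si\cap\mathbf{B}_{2\rho}(z_*)} f^{\sigma_*} \;\le\; c\, f^{\sigma_*}(z_*) \;\le\; c\, m^{\sigma_*},
\end{equation*}
while Corollary \ref{MVsub} at $z_*$ with $q=\sigma_*$ and radius $2\rho$ gives
\begin{equation*}
\sup_{\Si\cap\mathbf{B}_\rho(z_*)} f \;\le\; c_{\sigma_*}\!\left(\fint_{\Si\cap\mathbf{B}_{2\rho}(z_*)} f^{\sigma_*}\right)^{1/\sigma_*}.
\end{equation*}
Composing the two inequalities produces a constant $c=c(n)$ such that $\sup_{\Si\cap\mathbf{B}_\rho(z_*)} f \le c\,m$.

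If one can arrange $\rho$ large enough (in a fixed ratio to $r$) so that $\mathbf{B}_\rho(z_*) \supset \mathbf{B}_r(z)$, the Harnack estimate follows immediately, since the left side then dominates $\sup_{\Si\cap\mathbf{B}_r(z)} f$. When $r$ is too close to the admissible upper bound to do this in one step, a standard chain of overlapping balls of radius $\rho$ covering $\Si\cap\mathbf{B}_r(z)$ allows one to transport the bound from $z_*$ to any other point in $\Si\cap\mathbf{B}_r(z)$ by iterating the single-ball comparison, with the number of iterations depending only on $n$.

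The only real obstacle is book-keeping: matching the admissibility ranges in Corollary \ref{MVsub} and Lemma \ref{MVsup} (each requiring the reference ball to sit well inside the Allard neighborhood of size $\delta$) with the radius $r$ appearing in the theorem, and verifying that the two statements are applicable when re-centered at $z_*$ rather than at the originally-fixed $z$. Both issues are resolved by choosing $\rho$ as a fixed fraction of $r$ and using a finite, dimensionally-bounded chain of overlapping balls; no new analytic input is required.
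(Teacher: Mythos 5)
Your proposal is correct and coincides with what the paper intends: the paper's proof of Theorem \ref{Harnack} is literally the single sentence "Combining Corollary \ref{MVsub} and Lemma \ref{MVsup}, we get the following Harnack's inequality," and the two-sided combination you carry out — $\sup$ controlled by an $L^{\sigma_*}$-mean via Corollary \ref{MVsub}, that mean controlled by the pointwise value at the near-infimum point $z_*$ via Lemma \ref{MVsup}, and then a dimensionally-bounded chain of balls to reach all of $\Si\cap\mathbf{B}_r(z)$ — is exactly the standard Moser-style argument the paper is invoking. Your observation that the lemmas were proved at an arbitrary base point in the Allard neighborhood and thus may be re-centered at $z_*$, and that the remaining issues are radius/range book-keeping, is accurate.
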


\section{Hessian estimates with constraints}

Let $(u,\th)\in\mathbb{F}_n(\La,\k)$ for some $\La,\k\ge0$ and $n\ge3$.
We assume that $\la_1,\cdots,\la_n$ are the distinct eigenvalues of $D^2u$ on $B_1$ satisfying 
\begin{equation}\aligned\label{distincteachla}
\la_1>\la_2>\cdots>\la_n.
\endaligned
\end{equation}
Let $L=G_{Du}$.
We define $\g=(\g_1,\cdots,\g_n)$, $\{e_i\}_{i=1}^n$, $\{\nu_i\}_{i=1}^n$ and $h_{ijk}$ as in \eqref{gg1n}, \eqref{einui} and \eqref{hijk}, respectively.
For an integer $m\in\{1,\cdots, n-2\}$, we define a function $v_m$ on $B_1$ as in \eqref{vm*}.
From $\si_2\ge0$ in \eqref{sik***}, using the case of dimension 3, we have
\begin{equation}\aligned\label{laijkge0}
\la_i\la_j+\la_j\la_k+\la_k\la_i\ge0\qquad \mathrm{for\ each}\ i<j<k.
\endaligned
\end{equation}
From Lemma \ref{DeMla_1=} and \eqref{laijkge0}, we have
\begin{equation}\aligned\label{DeLlogv*}
\De_L\log v_m\ge&\sum_{k=1}^{m}(1+\la_k^2)h_{kkk}^2+\sum_{\stackrel{i,k=1}{i\neq k}}^{m}(3+\la_i^2+2\la_i\la_k)h_{iik}^2\\
&+\sum_{k\le m<l}\f{2\la_k(1+\la_k\la_l)}{\la_k-\la_l}h_{llk}^2
+\sum_{l\le m<k}\f{3\la_l-\la_k+\la_l^2(\la_l+\la_k)}{\la_l-\la_k}h_{llk}^2\\
&+\sum_{i=1}^{m}\f{\la_i}{1+\la_i^2}\th_{\g_i\g_i}-\sum_{i=1}^{m}\f{\la_i}{1+\la_i^2}\th_{\g_i}D_{\g_i}\log v_m.
\endaligned
\end{equation}
For $1\le k\le m$, let 
\begin{equation}\aligned\label{Tklen-2*}
T_k:=(1+\la_k^2)h_{kkk}^2+\sum_{i\le m,i\neq k}(3+\la_i^2+2\la_i\la_k)h_{iik}^2+\sum_{l>m}\f{2\la_k(1+\la_k\la_l)}{\la_k-\la_l}h_{llk}^2.
\endaligned
\end{equation}
For $m<k\le n$, let
\begin{equation}\aligned\label{Tklen-2*2}
T_k:=\sum_{i\le m}\f{3\la_i-\la_k+\la_i^2(\la_i+\la_k)}{\la_i-\la_k}h_{iik}^2.
\endaligned
\end{equation}
From Lemma 2.1 in Wang-Yuan \cite{WdY} (the case of dimension 3), it follows that
\begin{equation}\aligned\label{lan-2n**}
0\le2\la_n+\la_{n-2}.
\endaligned
\end{equation}
Then it's clear that
\begin{equation}\aligned\label{Tm*}
T_k\ge\f12\sum_{i\le m}(1+\la_i^2)h_{iik}^2\qquad\mathrm{for}\ k=m+1,\cdots,n.
\endaligned
\end{equation}
\begin{lemma}
For $1\le k\le m$, there holds
\begin{equation}\aligned\label{Tk*}
T_k\ge\f17\sum_{i=1}^n\la_i^2h_{iik}^2-21\th_{\g_k}^2 \qquad \ \ \mathrm{on}\ B_{1}.
\endaligned
\end{equation} 
\end{lemma}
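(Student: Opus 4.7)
The inequality is pointwise and purely algebraic, so I would treat the $h_{iik}$'s as free real parameters subject only to the linear trace constraint obtained by substituting \eqref{hijk} into \eqref{uijkThk} in the orthonormal $\gamma$-frame, namely
\[
\sqrt{1+\lambda_k^2}\,\sum_{i=1}^n h_{iik} \;=\; \theta_{\gamma_k}.
\]
The sign information coming from $\theta \ge (n-2)\pi/2$ is essential: applying the argument of Lemma~2.1 in \cite{WdY} to the partial sum $\arctan\lambda_i + \arctan\lambda_j = \theta - \sum_{r\ne i,j}\arctan\lambda_r \ge 0$ yields $\lambda_i+\lambda_j\ge 0$ for every $i \ne j$, hence $\lambda_j \ge 0$ for all $j \le n-1$ and $|\lambda_n|\le \lambda_j$ for each $j < n$. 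In particular $\lambda_k \ge \lambda_l \ge 0$ for every $m < l \le n-1$ and $\lambda_k \ge |\lambda_n|$.

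The strategy is to verify the equivalent inequality $T_k + 21\theta_{\gamma_k}^2 \ge \tfrac{1}{7}\sum_i \lambda_i^2 h_{iik}^2$ termwise wherever possible. The case $i = k$ is handled by $1+\lambda_k^2 \ge \lambda_k^2/7$, keeping the surplus $(1+\tfrac{6}{7}\lambda_k^2)h_{kkk}^2$ in reserve. For $i \le m$ with $i \ne k$, the sign facts $\lambda_i,\lambda_k \ge 0$ immediately give $3 + \lambda_i^2 + 2\lambda_i\lambda_k \ge \lambda_i^2/7 + 3$. For $m < l < n$, where still $\lambda_l \ge 0$, clearing denominators yields
\[
\frac{2\lambda_k(1+\lambda_k\lambda_l)}{\lambda_k - \lambda_l} - \tfrac{1}{7}\lambda_l^2 \;=\; \frac{14\lambda_k + \lambda_l\bigl(14\lambda_k^2 - \lambda_k\lambda_l + \lambda_l^2\bigr)}{7(\lambda_k - \lambda_l)} \;\ge\; 0,
\]
since $14\lambda_k^2 - \lambda_k\lambda_l + \lambda_l^2$ is a positive quadratic in $\lambda_k$ (discriminant $-55\lambda_l^2$).

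The only index that can give a negative coefficient is $l = n$ with $\lambda_n < 0$. Using $1 + \lambda_k\lambda_n = (1+\lambda_k^2) - \lambda_k(\lambda_k - \lambda_n)$, I would rewrite $T_k^{(n)} = \frac{2\lambda_k(1+\lambda_k^2)}{\lambda_k - \lambda_n} - 2\lambda_k^2$; the first piece is nonnegative while the second contributes a deficit of at most $(2\lambda_k^2 + \lambda_n^2/7)h_{nnk}^2 \le \tfrac{15}{7}\lambda_k^2\,h_{nnk}^2$, using $|\lambda_n|\le \lambda_k$. To absorb this deficit I would substitute $h_{nnk} = \theta_{\gamma_k}/\sqrt{1+\lambda_k^2} - \sum_{i\ne n}h_{iik}$ via the trace identity, and redistribute against the two reserves: the unused $(1+\tfrac{6}{7}\lambda_k^2)h_{kkk}^2$ from the $i=k$ surplus and the full $21\theta_{\gamma_k}^2 = 21(1+\lambda_k^2)\bigl(\sum_i h_{iik}\bigr)^2$ term. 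Completing the square in $h_{kkk}$ (equivalently, in $\theta_{\gamma_k}$) reduces the problem to nonnegativity of a quadratic form in $\{h_{iik}\}_{i \ne n}$ whose diagonal entries are already controlled by the earlier steps.

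The main obstacle is precisely this final semi-definiteness check. In the extremal direction $|\lambda_n| \to \lambda_k$, the bare Schur-complement reduction from eliminating $h_{kkk}$ produces only $\sim (14/17)\lambda_k^2$ of compensation, which does not on its own cover the full $(15/7)\lambda_k^2$ deficit from $|E_n|$. Closing this gap is the delicate point and is where the sharper structural inequalities $\sigma_k \ge 0$ from \eqref{sik***} together with \eqref{lan-2n**} must enter, excluding the worst-case eigenvalue configurations that would otherwise produce a violating $E_n$. The specific numerical constants $1/7$ and $21$ appearing in the statement are dictated exactly by this balance; any less careful choice would fail to close the gap.
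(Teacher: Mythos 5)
Your proposal identifies the right difficulty but does not close it, and you acknowledge this yourself: the decomposition you choose for the $l=n$ term, writing $T_k^{(n)} = \f{2\la_k(1+\la_k^2)}{\la_k-\la_n}-2\la_k^2$ and discarding the first (nonnegative) piece, creates a deficit of roughly $\f{15}{7}\la_k^2$ in the extremal direction $|\la_n|\to\la_k$, and the Schur-complement absorption then comes up short. This is a genuine gap, not just a bookkeeping nuisance; the decomposition is simply too wasteful.

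The paper closes the argument by keeping more of the $l>m$ terms. The key identity, valid for each $l>m$, is
\begin{equation*}
\f23\f{\la_k^2\la_l^2}{(\la_k-\la_l)(\la_k-2\la_l/3)}+\f{2\la_k^2\la_l}{\la_k-2\la_l/3}=\f{2\la_k^2\la_l}{\la_k-\la_l},
\end{equation*}
used uniformly for all $l>m$ (including $l=n$), not termwise with two different cases as in your approach. This extracts from $T_k$ a coefficient $\f23\f{\la_k^2\la_l^2}{(\la_k-\la_l)(\la_k-2\la_l/3)}$ in front of each $h_{llk}^2$. For $l=n$ the estimate $2\la_n+\la_{n-2}\ge0$ from \eqref{lan-2n**} forces that coefficient to be at least $\f13\la_n^2=\left(\f17+\f4{21}\right)\la_n^2$, so not only is the $\f17\la_n^2 h_{nnk}^2$ target met but a surplus $\f4{21}\la_n^2h_{nnk}^2$ is banked. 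The residual "bad" part $\f{2\la_k^2\la_n}{\la_k-2\la_n/3}h_{nnk}^2$ is negative but, crucially, bounded by roughly $\f65\la_k^2h_{nnk}^2$, far less than your $\f{15}7\la_k^2$. The paper then applies a single weighted Cauchy--Schwarz to the entire block $i=1,\dots,n-1$ (your reserves and the $l<n$ terms all together, not term by term) to produce $\la_k^2\big(\sum_{i<n}h_{iik}\big)^2\big(\f13-\f{\la_k}{2\la_n}\big)^{-1}$, and a clean cancellation occurs: once one substitutes $\sum_{i<n}h_{iik}=\f{\th_{\g_k}}{\sqrt{1+\la_k^2}}-h_{nnk}$, the $h_{nnk}^2$ quadratic contributions from the Cauchy--Schwarz term and the bad term cancel exactly, leaving only a cross term $-4|\la_n|\,|h_{nnk}\th_{\g_k}|$. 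That cross term plus the banked $\f4{21}\la_n^2h_{nnk}^2$ plus $21\th_{\g_k}^2$ is a perfect square (the discriminant $16-4\cdot\f4{21}\cdot21=0$ is sharp), which is where the constants $1/7$ and $21$ actually come from. So your intuition that the constants encode a delicate balance is correct, but the balance is arranged by the $2/3$ and $\la_k-2\la_l/3$ split, not by the decomposition you chose.
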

\begin{proof}
For $\la_n\ge0$, \eqref{Tk*} clearly holds from \eqref{Tklen-2*}. Hence, we only consider the case $\la_n\le0$.
For $l=m+1,\cdots,n$, there holds
\begin{equation}\aligned
\f23\f{\la_k^2\la_l^2}{(\la_k-\la_l)(\la_k-2\la_l/3)}+\f{2\la_k^2\la_l}{\la_k-2\la_l/3}=\f{2\la_k^2\la_l}{\la_k-\la_l}.
\endaligned
\end{equation}
Noting $n-m\ge2$.
For each $1\le k\le m$, from \eqref{Tklen-2*} with Cauchy-Schwartz inequality we have
\begin{equation}\aligned\label{Tk-1/7*}
&T_k-\f17\sum_{i\le m}\la_i^2h_{iik}^2-\f23\sum_{l>m}\f{\la_k^2\la_l^2}{(\la_k-\la_l)(\la_k-2\la_l/3)}h_{llk}^2\\
\ge&\f6{7}\la_k^2h_{kkk}+2\sum_{i\le m,i\neq k}\la_i\la_kh_{iik}^2+\sum_{l>m}\f{2\la_k^2\la_l}{\la_k-2\la_l/3}h_{llk}^2\\
\ge&\la_k^2\left(\sum_{i=1}^{n-1}h_{iik}\right)^2\left(\f{7}6+\sum_{i\le m,i\neq k}\f{\la_k}{2\la_i}+\sum_{m<l<n}\f{\la_k-2\la_{l}/3}{2\la_{l}}\right)^{-1}+\f{2\la_k^2\la_nh_{nnk}^2}{\la_k-2\la_n/3}\\
\ge&\la_k^2\left(\sum_{i=1}^{n-1}h_{iik}\right)^2\left(\sum_{i\le n-1}\f{\la_k}{2\la_i}+\f13\right)^{-1}+\f{\la_k^2}{\f{\la_k}{2\la_n}-\f13}h_{nnk}^2.
\endaligned
\end{equation}
With \eqref{lan-2n**} and $\la_n\le0$, we have
\begin{equation}\aligned
\f23\f{\la_k^2}{(\la_k-\la_n)(\la_k-2\la_n/3)}\ge\f23\left(1-\f{\la_n}{\la_{n-2}}\right)^{-1}\left(1-\f{2\la_n}{3\la_{n-2}}\right)^{-1}\ge\f13=\f17+\f4{21}.
\endaligned
\end{equation}
Noting $\sum_{1\le i\le n}\la_i^{-1}\le0$ from \eqref{sik***} and $\la_n\le0$. Hence, \eqref{Tk-1/7*} gives
\begin{equation}\aligned\label{Tklaihiik2}
T_k-\f17\sum_{i\le n}\la_i^2h_{iik}^2-\f4{21}\la_n^2h_{nnk}^2\ge\la_k^2\left(\sum_{i=1}^{n-1}h_{iik}\right)^2\left(\f13-\f{\la_k}{2\la_n}\right)^{-1}+\f{\la_k^2}{\f{\la_k}{2\la_n}-\f13}h_{nnk}^2.
\endaligned
\end{equation}
From
\begin{equation}\aligned
\sum_{i=1}^nh_{iik}=\lan H_{L},\nu_k\ran=\f{\th_{\g_k}}{\sqrt{1+\la_k^2}},
\endaligned
\end{equation}
we get
\begin{equation}\aligned\label{sumhiiki*}
\left(\sum_{i=1}^{n-1}h_{iik}\right)^2=\left(h_{nnk}-\f{\th_{\g_k}}{\sqrt{1+\la_k^2}}\right)^2\ge h_{nnk}^2-\f{2h_{nnk}\th_{\g_k}}{\sqrt{1+\la_k^2}}.
\endaligned
\end{equation}
Hence, combining with Cauchy-Schwartz inequality and \eqref{Tklaihiik2} gives
\begin{equation}\aligned
T_k-\f17\sum_{i\le n}\la_i^2h_{iik}^2\ge&\f4{21}\la_n^2h_{nnk}^2-\f{2\la_k}{\sqrt{1+\la_k^2}}|h_{nnk}\th_{\g_k}|\f{\la_k}{\f13-\f{\la_k}{2\la_n}}\\
\ge&\f4{21}\la_n^2h_{nnk}^2+4\la_n|h_{nnk}\th_{\g_k}|\ge-21\th_{\g_k}^2,
\endaligned
\end{equation}
as desired.
\end{proof}

Let $g_{ij}=\de_{ij}+\sum_ku_{ik}u_{jk}$, and $\tilde{g}_{ij}=\de_{ij}+\sum_ku_{\g_i\g_k}u_{\g_j\g_k}=\de_{ij}+\la_i^2\de_{ij}$ with $\g_i=(\g_{1i},\cdots,\g_{ni})^T$ as in \eqref{gg1n}. Let $g^{-1}=(g^{ij})$ and $\tilde{g}^{-1}=(\tilde{g}^{ij})$ be the inverse matrice of $g=(g_{ij})$ and $\tilde{g}=(\tilde{g}_{ij})$, respectively.
Let $\Xi=\mathrm{diag}\{\la_1,\cdots,\la_n\}$, and $v=\sqrt{\det g}$.
From $(D^2u)\g=\g \Xi$ with $\g=(\g_{ij})$, we have $\g^T(D^2u)^2\g=\Xi^2$, $\g^Tg\g=\tilde{g}$, 
\begin{equation}\aligned
\tilde{g}^{-1}=(\g^Tg\g)^{-1}=\g^Tg^{-1}\g \quad\mathrm{and}\quad g^{-1}=\g\tilde{g}^{-1}\g^T.
\endaligned
\end{equation}
Hence, 
\begin{equation}\aligned
(D^2u) g^{-1}=(D^2u)\g\tilde{g}^{-1}\g^T=\g \Xi\tilde{g}^{-1}\g^T.
\endaligned
\end{equation}
In particular, for each $j,k\in\{1,\cdots,n\}$
\begin{equation}\aligned\label{laithgigi}
\sum_{i=1}^n u_{ik}g^{ij}=\sum_{i=1}^{n}\f{\la_i}{1+\la_i^2}\g_{ki}\g_{ji}.
\endaligned
\end{equation}

\begin{lemma}\label{divthk*}
If we further assume $\la_{m}/2\ge\la_{m+1}\ge1$ on $B_{1}$, then there holds
\begin{equation*}\aligned
\left|\sum_{i=1}^{m}\sum_{j,k=1}^n\f{\th_k}v\p_{j}\left(\f{\la_iv\g_{ji}\g_{ki}}{1+\la_i^2}\right)\right|
\le
&\sum_{i=1}^n\sum_{k=1}^{m}\f{|h_{iik}\th_{\g_k}|}{\sqrt{1+\la_k^2}}+3\sum_{\min\{i,k\}\le m}|\la_ih_{iik}\th_{\g_k}|.
\endaligned
\end{equation*}
\end{lemma}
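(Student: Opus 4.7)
The plan is to reduce the left-hand side to an explicit sum of terms of the form $h_{iik}\th_{\g_k}$ via a direct product-rule computation, and then to bound the resulting coefficients by casework on the index pair $(i,k)$, exploiting the hypothesis $\la_m/2\ge\la_{m+1}\ge 1$. Throughout I would use that the eigenvalues are distinct by \eqref{distincteachla}, so that $\g_i$ is smooth and the standard formulas
\[
\p_j\la_i=u_{\g_i\g_i j},\qquad \p_j\g_i=\sum_{l\neq i}\f{u_{\g_i\g_l j}}{\la_i-\la_l}\g_l
\]
apply. First I would fix $i\le m$ and write $Q_i=\la_i/(1+\la_i^2)$. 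Applying $\p_j$ to $vQ_i\g_{ji}\g_{ki}$ and summing against $\th_k/v$ produces four contributions: one with $D_{\g_i}\log v$, one with $D_{\g_i}Q_i$, one with $\sum_j\p_j\g_{ji}=\sum_{l\neq i}u_{\g_i\g_l\g_l}/(\la_i-\la_l)$, and one with $\sum_k\th_k\p_j\g_{ki}$ evaluated against $\g_{ji}$. Using \eqref{hijk} to rewrite all third derivatives, one checks that the first and second collapse together with the third via the identity $\la_l+\tfrac{1+\la_l^2}{\la_i-\la_l}=\tfrac{1+\la_i\la_l}{\la_i-\la_l}$ to the clean expression
\[
S_i=\f{\th_{\g_i}h_{iii}}{\sqrt{1+\la_i^2}}+\sum_{l\neq i}\f{\la_i(1+\la_i\la_l)\th_{\g_i}h_{lli}}{(\la_i-\la_l)\sqrt{1+\la_i^2}}+\sum_{l\neq i}\f{\la_i\sqrt{1+\la_l^2}h_{iil}\th_{\g_l}}{\la_i-\la_l}.
\]

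Next I would sum $S=\sum_{i\le m}S_i$ and regroup the off-diagonal terms into ordered pairs $(p,q)$ with $p\neq q$, where the last two sums each contribute once or twice depending on whether $p,q\le m$. For the \emph{doubly small} case $p,q\le m$, both contribute, and the algebraic identity
\[
\f{\la_p(1+\la_p\la_q)}{(\la_p-\la_q)\sqrt{1+\la_p^2}}-\f{\la_q\sqrt{1+\la_p^2}}{\la_p-\la_q}=\f{1}{\sqrt{1+\la_p^2}}
\]
(verified directly after clearing denominators) reduces the combined coefficient of $h_{qqp}\th_{\g_p}$ to the single term $1/\sqrt{1+\la_p^2}$. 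Combined with the diagonal $h_{iii}$ contributions, this recovers the full double sum $\sum_{i,k\le m}|h_{iik}\th_{\g_k}|/\sqrt{1+\la_k^2}$, which is absorbed into the first sum on the right-hand side.

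For the two mixed cases only one side contributes. When $k\le m<i$ (Case B), the coefficient on $h_{iik}\th_{\g_k}$ is $\la_k(1+\la_k\la_i)/((\la_k-\la_i)\sqrt{1+\la_k^2})$. Writing it as $\tfrac{1}{\sqrt{1+\la_k^2}}\bigl(1+\tfrac{\la_i(1+\la_k^2)}{\la_k-\la_i}\bigr)$ for $\la_i\ge 0$, using $\la_k-\la_i\ge\la_k/2$ (since $\la_i\le\la_{m+1}\le\la_m/2\le\la_k/2$) together with $\sqrt{1+\la_k^2}\le 3\la_k/2$ (valid since $\la_k\ge2$), and handling $\la_i<0$ by the crude bound $|1+\la_k\la_i|\le 1+\la_k|\la_i|$ and $|\la_k-\la_i|\ge\la_k$, I would show the coefficient is at most $1/\sqrt{1+\la_k^2}+3|\la_i|$. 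When $i\le m<k$ (Case C), the coefficient $|\la_i|\sqrt{1+\la_k^2}/(\la_i-\la_k)$ is $\le 3|\la_i|$ since $\la_i\ge 2$ forces $\sqrt{1+\la_k^2}\le 3(\la_i-\la_k)$, again by splitting on the sign of $\la_k$ and using $\la_k\le\la_i/2$ in the nonnegative case.

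The main obstacle is not any single step but the bookkeeping in the middle paragraph: the cancellation between the two off-diagonal sums is invisible at the level of individual $S_i$ and only emerges after pairing $(p,q)$ and $(q,p)$ contributions, so I would need to be careful to regroup before estimating. The two sign-case estimates in the mixed cases are routine once one writes the coefficient in the additive form that isolates the $1/\sqrt{1+\la_k^2}$ piece from the $|\la_i|$ piece.
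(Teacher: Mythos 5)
Your proposal is correct: the per-index computation of $S_i$ checks out against \eqref{pjlam*}--\eqref{pjvvgjm}, your pairwise identity is exactly equation (5.30) in the paper rewritten, and the mixed-case bounds all verify under $\la_m/2\ge\la_{m+1}\ge 1$ (including the dangerous $\la_k\to-\infty$ regime in Case C, where $\sqrt{1+\la_k^2}/(\la_i-\la_k)\to 1$).

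The route differs from the paper's in one organizational respect. You compute $\sum_{i\le m}S_i$ directly and obtain the cancellation by pairing $(p,q)$ with $(q,p)$ among the indices $\le m$. The paper instead writes $\sum_{i\le m}=\sum_{i=1}^n-\sum_{l>m}$: the full sum $\sum_{i,j,k}\f{\th_k}{v}\p_j(vg^{ij}u_{ik})$ is evaluated all at once via the mean-curvature identity $\De_L u_k=\lan H_L,E_{n+k}\ran$ (the paper's \eqref{thkDeLuk}), producing $\sum_{i=1}^n\sum_{k\le m}\f{h_{iik}\th_{\g_k}}{\sqrt{1+\la_k^2}}$ in one stroke, and then the tail $\sum_{l>m}$ is subtracted off. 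That choice lets the paper isolate the ``good'' $1/\sqrt{1+\la_k^2}$ piece globally and leaves the mixed terms in the form $\pm\f{\la_i\sqrt{1+\la_l^2}}{\la_i-\la_l}h_{iil}\th_{\g_l}$, so only the single bound $\sqrt{1+\la_l^2}\le 3|\la_i-\la_l|$ is needed in both mixed cases, whereas your Case B requires the slightly more delicate splitting $\f{\la_k(1+\la_k\la_i)}{(\la_k-\la_i)\sqrt{1+\la_k^2}}=\f1{\sqrt{1+\la_k^2}}\bigl(1+\f{\la_i(1+\la_k^2)}{\la_k-\la_i}\bigr)$ before estimating. Your version is more self-contained (it avoids invoking the $\De_L u_k$ identity), the paper's yields a marginally cleaner final casework; both are sound.
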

\begin{proof}
The distinct eigenvalues in \eqref{distincteachla} infer that all $\la_l$ and $\g_l$ are differentiable on $B_1$. More precisely,
from matrix analysis we have
\begin{equation}\aligned\label{pjlam*}
\p_{j}\la_l=\lan\g_l,\p_j(D^2u)\g_l\ran,
\endaligned
\end{equation}
and
\begin{equation}\aligned\label{pjgm*}
\p_{j}\g_l=\sum_{i\neq l}\f1{\la_l-\la_i}\lan\g_i,\p_j(D^2u)\g_l\ran\g_i.
\endaligned
\end{equation}
In fact, \eqref{pjlam*} can be obtained by taking derivative of $\la_l=D^2u(\g_l,\g_l)$. If we take derivative of $D^2u=\g\Xi\g^T$ w.r.t. $x_j$, i.e.,
$$\p_j(D^2u)=(\p_j\g)\Xi\g^T+\g(\p_j\Xi)\g^T+\g\Xi(\p_j\g)^T,$$
then \eqref{pjgm*} can be derived by computing $\lan\g_i,\p_j(D^2u)\g_l\ran$ using the above equality.
From \eqref{hijk}, \eqref{pjlam*} and \eqref{pjgm*}, for each $l$ we have
\begin{equation}\aligned\label{gjmjlam}
\sum_{j=1}^n\g_{jl}\p_{j}\la_l=\lan\g_l,D_{\g_l}(D^2u)\g_l\ran=(1+\la_l^2)^{3/2}h_{lll},
\endaligned
\end{equation}
\begin{equation}\aligned
\sum_{j=1}^n\g_{jl}\p_{j}\g_{kl}=\sum_{i\neq l}\f{\lan\g_i,D_{\g_l}(D^2u)\g_l\ran}{\la_l-\la_i}\g_{ki}=\sum_{i\neq l}\f{1+\la_l^2}{\la_l-\la_i}\sqrt{1+\la_i^2}h_{lli}\g_{ki},
\endaligned
\end{equation}
and
\begin{equation}\aligned\label{pjgjm}
\sum_{j=1}^n\p_{j}\g_{jl}=\sum_{i\neq l}\f1{\la_l-\la_i}\sum_j\lan\g_i,\p_{j}(D^2u)\g_l\ran\g_{ji}=\sum_{i\neq l}\f{1+\la_i^2}{\la_l-\la_i}\sqrt{1+\la_l^2}h_{iil}.
\endaligned
\end{equation}
From $v=\sqrt{\det g}=\prod_{i=1}^n\sqrt{1+\la_i^2}$, it follows that
\begin{equation}\aligned\label{pjvvgjm}
\sum_{j=1}^n\p_jv\g_{jl}=&D_{\g_l}e^{\f12\sum_{i=1}^n\log(1+\la_i^2)}=\sum_{i=1}^n\f{v\la_i}{1+\la_i^2}D_{\g_l}\la_i
=\sum_{i=1}^n\sqrt{1+\la_l^2}\la_ih_{iil}v.
\endaligned
\end{equation}
Substituting \eqref{gjmjlam}-\eqref{pjvvgjm} into 
\begin{equation}\aligned
\p_{j}\left(\f{\la_l}{1+\la_l^2}v\g_{jl}\g_{kl}\right)=&\f{1-\la_l^2}{(1+\la_l^2)^2}v\g_{jl}\g_{kl}\p_{j}\la_l+\f{\la_l}{1+\la_l^2}v\p_j\g_{jl}\g_{kl}\\
&+\f{\la_l}{1+\la_l^2}\p_jv\g_{jl}\g_{kl}+\f{\la_l}{1+\la_l^2}v\g_{jl}\p_j\g_{kl}
\endaligned
\end{equation}
with summation on $j$ gives
\begin{equation}\aligned\label{pjlalvggl}
\sum_{j=1}^n\p_{j}\left(\f{\la_lv\g_{jl}\g_{kl}}{1+\la_l^2}\right)=&\f{1-\la_l^2}{\sqrt{1+\la_l^2}}v\g_{kl}h_{lll}+\f{\la_lv\g_{kl}}{\sqrt{1+\la_l^2}}\sum_{i\neq l}\f{1+\la_i^2}{\la_l-\la_i}h_{iil}\\
&+\f{\la_lv\g_{kl}}{\sqrt{1+\la_l^2}}\sum_{i=1}^n\la_ih_{iil}+\sum_{i\neq l}\f{\la_l v}{\la_l-\la_i}\sqrt{1+\la_i^2}h_{lli}\g_{ki}.\\
\endaligned
\end{equation}
Moreover, we multiply $\th_k$ on the both sides of \eqref{pjlalvggl} with summation on $k$, and get
\begin{equation}\aligned
\sum_{j,k=1}^n\f{\th_k}v\p_{j}&\left(\f{\la_lv\g_{jl}\g_{kl}}{1+\la_l^2}\right)=\f{1-\la_l^2}{\sqrt{1+\la_l^2}}h_{lll}\th_{\g_l}+\f{\la_l\th_{\g_l}}{\sqrt{1+\la_l^2}}\sum_{i\neq l}\f{1+\la_i^2}{\la_l-\la_i}h_{iil}\\
&\qquad\qquad\quad+\f{\la_l\th_{\g_l}}{\sqrt{1+\la_l^2}}\sum_{i=1}^n\la_{i}h_{iil}+\sum_{i\neq l}\f{\la_l}{\la_l-\la_i}\sqrt{1+\la_i^2}h_{lli}\th_{\g_i}\\
=\f{h_{lll}\th_{\g_l}}{\sqrt{1+\la_l^2}}&+\f{\la_l\th_{\g_l}}{\sqrt{1+\la_l^2}}\sum_{i\neq l}\f{1+\la_i\la_l}{\la_l-\la_i}h_{iil}+\sum_{i\neq l}\f{\la_l}{\la_l-\la_i}\sqrt{1+\la_i^2}h_{lli}\th_{\g_i}.
\endaligned
\end{equation}
Since
\begin{equation}\aligned
\f{\la_l}{\sqrt{1+\la_l^2}}\f{1+\la_i\la_l}{\la_l-\la_i}+\f{\la_i}{\la_i-\la_l}\sqrt{1+\la_l^2}=\f1{\sqrt{1+\la_l^2}}\qquad\mathrm{for\ all}\ i\neq l,
\endaligned
\end{equation}
we have
\begin{equation}\aligned\label{thkvlamvgg}
&\sum_{l=m+1}^n\sum_{j,k=1}^n\f{\th_k}v\p_{j}\left(\f{\la_lv\g_{jl}\g_{kl}}{1+\la_l^2}\right)\\
=&\sum_{l=m+1}^{n}\sum_{i=1}^n\f{h_{iil}\th_{\g_l}}{\sqrt{1+\la_l^2}}-\sum_{l=m+1}^{n}\sum_{i\neq l}\f{\la_i}{\la_i-\la_l}\sqrt{1+\la_l^2}h_{iil}\th_{\g_l}\\&+\sum_{i=m+1}^{n}\sum_{l\neq i}\f{\la_i}{\la_i-\la_l}\sqrt{1+\la_l^2}h_{iil}\th_{\g_l}\\
=&\sum_{l=m+1}^{n}\sum_{i=1}^n\f{h_{iil}\th_{\g_l}}{\sqrt{1+\la_l^2}}+\left(\sum_{i=m+1}^{n}\sum_{l=1}^m-\sum_{l=m+1}^{n}\sum_{i=1}^m\right)\f{\la_i}{\la_i-\la_l}\sqrt{1+\la_l^2}h_{iil}\th_{\g_l}.
\endaligned
\end{equation}
From \eqref{einui} and $\lan J\g_j,E_{n+k}\ran=\lan \g_j,E_k\ran=\g_{kj}$, we have
\begin{equation}\aligned\label{thkDeLuk}
\sum_{i,j,k=1}^n\f{\th_k}v\p_j&(vg^{ij}u_{ik})=\sum_{k=1}^n\th_k\De_Lu_k=\sum_{k=1}^n\th_k\lan H_L,E_{n+k}\ran=\sum_{i,j,k=1}^nh_{iij}\lan\nu_j,E_{n+k}\ran\th_k\\
=&\sum_{i,j,k=1}^n\f{h_{iij}\th_k}{\sqrt{1+\la_j^2}}\lan J\g_j,E_{n+k}\ran=\sum_{i,j,k=1}^n\f{h_{iij}\g_{kj}\th_k}{\sqrt{1+\la_j^2}}=\sum_{i,j=1}^n\f{h_{iij}\th_{\g_j}}{\sqrt{1+\la_j^2}}.
\endaligned
\end{equation}
Combining \eqref{thkvlamvgg}\eqref{thkDeLuk}, from \eqref{laithgigi} we obtain
\begin{equation}\aligned
&\sum_{i=1}^{m}\sum_{j,k=1}^n\f{\th_k}v\p_{j}\left(\f{\la_i}{1+\la_i^2}v\g_{ji}\g_{ki}\right)\\
=&\sum_{i,j,k=1}^n\f{\th_k}v\p_j(vg^{ij}u_{ik})-\sum_{l=m+1}^n\sum_{j,k=1}^n\f{\th_k}v\p_{j}\left(\f{\la_l}{1+\la_l^2}v\g_{jl}\g_{kl}\right)\\
=&\sum_{i=1}^n\sum_{k=1}^{m}\f{h_{iik}\th_{\g_k}}{\sqrt{1+\la_k^2}}+\left(\sum_{i=1}^{m}\sum_{l=m+1}^n-\sum_{l=1}^{m}\sum_{i=m+1}^n\right)\f{\la_i}{\la_i-\la_l}\sqrt{1+\la_l^2}h_{iil}\th_{\g_l}.
\endaligned
\end{equation}
We complete the proof by $\la_{m}/2\ge\la_{m+1}\ge1$ on $B_{1}$.
\end{proof}

Let $\na$ denote the Levi-Civita connection on $L$.
We have the following Jacobi inequality involving a divergence term of derivatives of $\th$ with constraints.
\begin{lemma}\label{Jacobiv*}
If we further assume $\la_{m}/2\ge\la_{m+1}\ge1$ on $B_{1}$, then there is a constant $\de_n>0$ depending only on $n$ so that 
\begin{equation}\aligned\label{DeLogvmlowdb*}
\De_L\log v_m-\de_n|\na\log v_m|^2\ge\f{1}v\sum_{i=1}^{m}\sum_{j,k=1}^n\p_{j}\left(\f{\la_i\th_k}{1+\la_i^2}v\g_{ji}\g_{ki}\right)-\f{\La^2}{\de_n}\qquad \mathrm{on}\ B_{1}.
\endaligned
\end{equation}
\end{lemma}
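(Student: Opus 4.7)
The plan is to rewrite $\Delta_L \log v_m$ starting from the inequality \eqref{DeLlogv*}, extract the target divergence from the second-order $\theta$-terms via a product-rule identity, control the remaining pieces by Lemma~\ref{divthk*} and Cauchy--Schwarz, and absorb all the resulting $h_{ijk}$-quadratic errors into the positive combination $\sum_{k=1}^n T_k$, exploiting the constraint $\lambda_m/2\ge\lambda_{m+1}\ge 1$.

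The algebraic core of the argument is the identity (from $\sum_{j,k}\gamma_{ji}\gamma_{ki}\theta_{jk}=\theta_{\gamma_i\gamma_i}$ and the product rule)
\[
\sum_{i=1}^m\frac{\lambda_i}{1+\lambda_i^2}\theta_{\gamma_i\gamma_i}=\frac{1}{v}\sum_{i=1}^m\sum_{j,k=1}^n\partial_j\!\left(\frac{\lambda_i\theta_kv\gamma_{ji}\gamma_{ki}}{1+\lambda_i^2}\right)-\frac{1}{v}\sum_{i=1}^m\sum_{j,k=1}^n\theta_k\,\partial_j\!\left(\frac{\lambda_iv\gamma_{ji}\gamma_{ki}}{1+\lambda_i^2}\right).
\]
Inserting this into \eqref{DeLlogv*} produces exactly the required divergence term together with a residue bounded by Lemma~\ref{divthk*}. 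For the gradient term $-\sum_{i}\tfrac{\lambda_i}{1+\lambda_i^2}\theta_{\gamma_i}D_{\gamma_i}\log v_m$ I would use Cauchy--Schwarz on $L$ together with $|\theta_{\gamma_i}|\le\Lambda$:
\[
\Bigl|\sum_{i=1}^m\frac{\lambda_i}{1+\lambda_i^2}\theta_{\gamma_i}D_{\gamma_i}\log v_m\Bigr|\le\varepsilon|\nabla\log v_m|^2+\frac{n\Lambda^2}{4\varepsilon},
\]
where I note, via the differentiation formulas \eqref{gjmjlam}--\eqref{pjgjm}, that $|\nabla\log v_m|^2=\sum_{l=1}^n\bigl(\sum_{i\le m}\lambda_i h_{iil}\bigr)^2$.

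It remains to absorb all the $h$-quadratic errors (from the Lemma~\ref{divthk*} residue, together with the $\varepsilon|\nabla\log v_m|^2$ from Cauchy--Schwarz) into the lower bound for $\sum_k T_k$ provided by \eqref{Tk*} and \eqref{Tm*}. The constraint $\lambda_m/2\ge\lambda_{m+1}\ge 1$ yields $\lambda_i\ge 2$ and $1+\lambda_i^2\le \tfrac54\lambda_i^2$ for every $i\le m$. Every residue summand from Lemma~\ref{divthk*} has either $k\le m$ (where $\tfrac17\sum_i\lambda_i^2 h_{iik}^2$ from \eqref{Tk*} is available, using $\lambda_k\ge 2$ when needed) or $\min\{i,k\}\le m$ with $k>m$ (where $\tfrac12(1+\lambda_i^2)h_{iik}^2$ from \eqref{Tm*} is available for $i\le m$). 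Applying a weighted Cauchy--Schwarz $|xy|\le\varepsilon x^2+y^2/(4\varepsilon)$ to each summand and choosing $\varepsilon=\varepsilon(n)$ sufficiently small swallows all $h$-quadratic errors; the remaining leftovers are of order $n\Lambda^2/\varepsilon$ and yield the term $\Lambda^2/\delta_n$. The main obstacle is precisely this bookkeeping: one must verify that each bad $h_{iik}^2$-coefficient is matched by a positive coefficient in $\sum_k T_k$. This matching succeeds because of the special index structure built into Lemma~\ref{divthk*} (which involves only pairs with $k\le m$, or $\min\{i,k\}\le m$) combined with $\lambda_m/2\ge\lambda_{m+1}\ge 1$, which is exactly what guarantees positivity of either $\lambda_i^2$ or $(1+\lambda_i^2)$ in the appropriate range.
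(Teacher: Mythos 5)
Your overall structure mirrors the paper's proof: insert the product-rule identity \eqref{lailai2thgigi*} into \eqref{DeLlogv*}, use \eqref{Tk*} and \eqref{Tm*} for the positivity, compute $|\nabla\log v_m|^2=\sum_j\bigl(\sum_{i\le m}\lambda_i h_{iij}\bigr)^2$ via \eqref{|nalogv*|2}, and absorb the residue from Lemma \ref{divthk*} by Cauchy--Schwarz. The gradient-term bound and the second residue sum $3\sum_{\min\{i,k\}\le m}|\lambda_i h_{iik}\theta_{\gamma_k}|$ are correctly handled, because those summands carry the $\lambda_i$-factor needed for Cauchy--Schwarz to trade into $\varepsilon\lambda_i^2 h_{iik}^2$ plus an $O(\Lambda^2/\varepsilon)$ leftover.

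There is, however, a genuine gap in your absorption of the first residue sum $\sum_{i=1}^n\sum_{k=1}^m\frac{|h_{iik}\theta_{\gamma_k}|}{\sqrt{1+\lambda_k^2}}$ from the stated bound of Lemma \ref{divthk*}. When $i>m$ and $k\le m$, this summand carries \emph{no} $\lambda_i$-weight. Whatever splitting $|xy|\le\varepsilon x^2+y^2/(4\varepsilon)$ you choose, either the $x^2$-part is $\varepsilon h_{iik}^2$, which is not dominated by the available $\frac17\lambda_i^2 h_{iik}^2$ when $\lambda_i\to 0$, or the $y^2$-part is $\frac{\theta_{\gamma_k}^2}{4\varepsilon\,\lambda_i^2(1+\lambda_k^2)}$, which blows up. The constraint $\lambda_m/2\ge\lambda_{m+1}\ge 1$ controls $\lambda_k\ge 2$ for $k\le m$ and $\lambda_{m+1}\ge 1$, but it imposes no lower bound on $|\lambda_i|$ for $i\ge m+2$, so ``using $\lambda_k\ge 2$ when needed'' does not rescue these terms.

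The way to close the gap is to keep the sign structure of this first sum rather than applying the triangle inequality to each summand. From the mean-curvature identity $\sum_{i=1}^n h_{iik}=\langle H_L,\nu_k\rangle=\theta_{\gamma_k}/\sqrt{1+\lambda_k^2}$,
\[
\sum_{i=1}^n\sum_{k=1}^m\frac{h_{iik}\theta_{\gamma_k}}{\sqrt{1+\lambda_k^2}}=\sum_{k=1}^m\frac{\theta_{\gamma_k}^2}{1+\lambda_k^2}\in[0,\Lambda^2],
\]
so this contribution requires \emph{no} $h$-quadratic positivity at all — it is already an $O(\Lambda^2)$ nonnegative quantity. This is precisely the first term in the exact identity derived inside the proof of Lemma \ref{divthk*}; one should feed that identity (not merely the absolute-value corollary recorded in the lemma's statement) into \eqref{DeLlogv*Lipth*}. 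Then only the $\lambda_i$-weighted second sum requires absorption, and your bookkeeping goes through. (Alternatively, one may track the original coefficient $\frac{2\lambda_k(1+\lambda_k\lambda_l)}{\lambda_k-\lambda_l}$ of $h_{llk}^2$ in $T_k$ for $l>m$, $k\le m$: this remains bounded below by a universal positive constant precisely when $|\lambda_l|$ is small, positivity that the weaker bound $\frac17\lambda_l^2$ in \eqref{Tk*} discards.)
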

\begin{proof}
Noting
\begin{equation}\aligned\label{lailai2thgigi*}
&\sum_{i=1}^{m}\f{\la_i}{1+\la_i^2}\th_{\g_i\g_i}=\sum_{i=1}^{m}\sum_{j,k=1}^{n}\f{\la_i}{1+\la_i^2}\g_{ki}\th_{jk}\g_{ji}\\
=&\f{1}v\sum_{i=1}^{m}\sum_{j,k=1}^n\p_{j}\left(\f{\la_i\th_k}{1+\la_i^2}v\g_{ji}\g_{ki}\right)-\sum_{i=1}^{m}\sum_{j,k=1}^n\f{\th_k}v\p_{j}\left(\f{\la_iv\g_{ji}\g_{ki}}{1+\la_i^2}\right).
\endaligned
\end{equation}
Substituting \eqref{Tm*}\eqref{Tk*}\eqref{lailai2thgigi*} into \eqref{DeLlogv*} gives
\begin{equation}\aligned\label{DeLlogv*Lipth*}
\De_L\log v_m\ge&\f17\sum_{i=1}^n\sum_{k=1}^{m}\la_i^2h_{iik}^2-21\La^2+\f12\sum_{i=1}^{m}\sum_{k=m+1}^n\la_i^2h_{iik}^2-\sum_{i=1}^{m}\f{\la_i\th_{\g_i}}{1+\la_i^2}D_{\g_i}\log v_m\\
&+\f{1}v\sum_{i=1}^{m}\sum_{j,k=1}^n\p_{j}\left(\f{\la_i\th_k}{1+\la_i^2}v\g_{ji}\g_{ki}\right)-\sum_{i=1}^{m}\sum_{j,k=1}^n\f{\th_k}v\p_{j}\left(\f{\la_iv\g_{ji}\g_{ki}}{1+\la_i^2}\right).
\endaligned
\end{equation}
From \eqref{hijk}\eqref{pjlam*},
by Cauchy-Schwartz inequality
\begin{equation}\aligned\label{|nalogv*|2}
|\na\log v_m|^2=&\f14\left|\sum_{i=1}^{m}\na\log (1+\la_i^2)\right|^2=\sum_{j=1}^n\f1{1+\la_j^2}\left(\sum_{i=1}^{m}\f{\la_i}{1+\la_i^2}D^3u(\g_i,\g_i,\g_j)\right)^2\\
=&\sum_{j=1}^n\left(\sum_{i=1}^{m}\la_ih_{iij}\right)^2\le m\sum_{i=1}^{m}\sum_{j=1}^n\la_i^2h_{iij}^2.
\endaligned
\end{equation}
Noting $\la_{m}/2\ge\la_{m+1}\ge1$.
Combining Lemma \ref{divthk*} and \eqref{|nalogv*|2}, we finish the proof by using Cauchy-Schwartz inequality in \eqref{DeLlogv*Lipth*} several times.
\end{proof}

Now we use Lemma \ref{Jacobiv*} to derive a Hessian estimate with constraints.
\begin{theorem}\label{Hesslan-2}
Let $(u,\th)\in\mathbb{F}_n(\La,\k,r)$ for some $\La,\k\ge0$, $r>0$, $n\ge3$, and $\la_1\ge\cdots\ge\la_n$ denote eigenvalues of $D^2u$. If $\la_{m}/2>\la_{m+1}\ge1$ on $B_{r}$ with $1\le m\le n-2$, then there exists a constant $c(n,\k,\La)>0$ depending only on $n,\k,\La$ so that
$$|D^2u|(0^n)\le c(n,\k,\La).$$
\end{theorem}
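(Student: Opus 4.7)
By scaling we may assume $r = 1$, so $(u,\th) \in \mathbb{F}_n(\La,\k)$. Set $v_m = \prod_{i=1}^m \sqrt{1+\la_i^2}$. Under the structural hypothesis $\la_m > 2\la_{m+1} \ge 2$, a pointwise bound $v_m(0^n) \le c(n,\k,\La)$ yields $\la_1(0^n) \le v_m(0^n)/2^{m-1}$ and hence an upper bound on every $\la_i(0^n)$; the remaining lower bound $\la_n(0^n) \ge -c$ is then forced by $\sum_i \arctan\la_i = \th \ge (n-2)\pi/2$ combined with the upper bounds on the other eigenvalues. Thus the theorem reduces to controlling $v_m$ at the origin.

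The strategy is a Moser iteration on $L = G_{Du}$, which has bounded mean curvature $|H_L| = |\na^L \th| \le \La$. The Jacobi inequality of Lemma~\ref{Jacobiv*}, rewritten via $\De_L v_m = v_m(\De_L \log v_m + |\na \log v_m|^2)$ and the elementary estimate $\frac{|\la_i|}{1+\la_i^2} \le \frac{1}{2}$ for the coefficients of the divergence vector field, takes the weak subsolution form
$$\De_L v_m + c(n,\La)\,v_m \ge \tfrac{v_m}{v}\,\mathrm{div}_x F,$$
where $F_j = \sum_{i \le m, k}\frac{\la_i \th_k}{1+\la_i^2}\,v\,\g_{ji}\g_{ki}$ satisfies $|F| \le c(n,\La)\,v$. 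Testing against $\eta^2 v_m^{p-1}$ for a cutoff $\eta$ supported in $\mathbf{B}_1$, integrating against $dV_L = v\,dx$, and shifting the Euclidean divergence onto $D(\eta^2 v_m^{p-1})$ reduces the whole argument to producing a coercive estimate. Rewriting the pairing in the $\g_i$-frame through the identity $D_{\g_i}v_m = v_m\sqrt{1+\la_i^2}\sum_{l\le m}\la_l h_{lli}$ (implicit in the proof of Lemma~\ref{divthk*}), Cauchy--Schwarz absorbs the gradient contribution into a small multiple of $\int_L \eta^2 v_m^{p-2}|\na v_m|^2$ and yields the Caccioppoli estimate
$$\int_L \eta^2 |\na v_m^{p/2}|^2 \le c(n,\La,p)\int_L v_m^p (\eta^2 + |\na \eta|^2).$$
Combined with the Michael--Simon Sobolev inequality on $L$, the standard Moser iteration scheme delivers
$$\sup_{L \cap \mathbf{B}_{1/2}} v_m \le c(n,\k,\La)\,\|v_m\|_{L^q(L\cap \mathbf{B}_1)}$$
for some $q \ge 1$ depending only on $n$.

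The remaining integral $\int_{B_1} v_m^q\,v\,dx$ is bounded by applying Lemma~\ref{Intfv} with $\phi = v_m^{q}$ (iterated finitely many times if necessary), anchored at $q=0$ by the volume estimate of Lemma~\ref{VolGDu} which controls $\int_{B_1} v\,dx$ in terms of $n,\k,\La$. This closes the chain and gives $v_m(0^n) \le c(n,\k,\La)$, which is the required bound.

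The most delicate step is the Caccioppoli estimate. The divergence right-hand side of the Jacobi inequality pairs the potentially large Euclidean gradient of $v_m$ against the field $F$, and a naive Cauchy--Schwarz is catastrophic. One has to exploit two features simultaneously: first, the $\g_i$-components of $F$ carry the small factor $\la_i/(1+\la_i^2)\le 1/2$; second, the structural constraint $\la_m > 2\la_{m+1}$ ensures that $D_{\g_i}v_m$ is controlled by the very combinations $\{\la_l h_{lli}\}_{l\le m}$ that appear in the coercive term $\de_n|\na \log v_m|^2 = \de_n\sum_i(\sum_{l\le m}\la_l h_{lli})^2$ on the left of Lemma~\ref{Jacobiv*}. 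Once this absorption is achieved with a favorable constant, the Michael--Simon Sobolev inequality and Moser iteration run routinely.
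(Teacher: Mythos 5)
Your reduction to bounding $v_m(0^n)$, the rewriting of Lemma~\ref{Jacobiv*} in subsolution form for $v_m$, and the key absorption in the Caccioppoli step (using $D_{\g_i}v_m = v_m\sqrt{1+\la_i^2}\sum_{l\le m}\la_l h_{lli}$ together with the structure of $|\na\log v_m|^2$ from \eqref{|nalogv*|2}) are all sound and consistent with the paper's Lemmas \ref{divthk*} and \ref{Jacobiv*}. However, the Moser iteration on $v_m$ does not close. The iteration delivers $\sup_{L\cap\mathbf{B}_{1/2}}v_m\le c\|v_m\|_{L^{q_0}(L\cap\mathbf{B}_1)}$ only if the starting norm $\int_{B_1}v_m^{q_0}v\,dx$ is a priori controlled for some $q_0>0$, and it is not. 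The anchor $q=0$ gives only $\int_{B_1}v\,dx\le c$, and every attempt to bootstrap to $q_0>0$ via Lemma~\ref{Intfv} is circular: with $\phi=v_m^{q}$ the gradient term there is $\int_{B_1}|\na^\Si v_m^q|^2v=q^2\int_{B_1}v_m^{2q}|\na\log v_m|^2v$, and the Caccioppoli estimate controls this only by $\int v_m^{2q}v$, a strictly higher power — the chain of inequalities goes in the wrong direction. Observing $v_m\le v$ is also useless, since it only yields $\int v_m v\le\int v^2$, and Lemma~\ref{VolGDu} bounds $\int v$, not $\int v^2$. (A secondary issue: the hypothesis $\la_m/2>\la_{m+1}\ge1$ does not force all eigenvalues to be distinct, so $v_m$ and $\g_i$ need not be $C^1$; the paper handles this by perturbing $u$ with $\tfrac{\ep}{2}\sum_i(n+1-i)x_i^2$ and passing to the limit, establishing \eqref{DeLogvmlowdb*} distributionally. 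You cannot simply take smoothness for granted.)

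The paper avoids the starting-point obstruction entirely by never running a Moser scheme on $v_m$. It instead applies the almost monotonicity formula \eqref{MVder} with $\phi=\log v_m$ and integrates in $r$ to obtain the mean value inequality \eqref{AppMVI} directly for $\log v_m(\mathbf{0})$. This works precisely because every quantity on the right of \eqref{AppMVI} is accessible without any a priori supremum bound: $\int_{L\cap\mathbf{B}_r}\log v_m$ is bounded by Lemma~\ref{Intfv} with $\phi=\log v_m$ (the unweighted term is controlled by $\log v_m\le v$, the gradient term by \eqref{nalogv*2}); $\int_{B_{r/2}}|\na\log v_m|^2v$ is exactly \eqref{nalogv*2}, itself derived from the Jacobi inequality with the same divergence structure; and $\int_{L\cap\mathbf{B}_r}(r^2-|\mathbf{x}|^2)\De_L\log v_m$ is bounded via \eqref{Yjxjululj} and \eqref{r2-x2Delogvm}. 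The slow (logarithmic) growth is what makes the anchoring integrals finite, and that is the missing idea in your proposal.
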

\begin{proof}
Without loss of generality, we assume $r=1$. 
Given $x_0\in B_1$, up to a rotation we assume $\g_i(x_0)=E_i^T$ for each $i$, where $\g$ is defined in \eqref{gg1n}, and $\{E_i\}_{1\le i\le n}$ denotes a standard orthonormal basis of $\R^n$.
For each $\ep>0$, we consider a sequence of functions 
$$\tilde{u}_\ep(x):=u(x)+\f{\ep}2\sum_{i=1}^n(n+1-i)x_i^2.$$ 
Denote $\mathfrak{B}=\mathrm{diag}\{n,n-1,\cdots,1\}$. We have
\begin{equation}\aligned
D^2\tilde{u}_\ep=D^2u+\ep\mathfrak{B}=\g\Xi\g^T+\ep\g\tilde{\mathfrak{B}}\g^T+\ep\left(\mathfrak{B}-\g\tilde{\mathfrak{B}}\g^T\right),
\endaligned
\end{equation}
where $\Xi=\mathrm{diag}\{\la_1,\cdots,\la_n\}$, and $\tilde{\mathfrak{B}}$
is diagonal. It's clear that
$\lim_{x\to x_0}\tilde{\mathfrak{B}}(x)=\mathfrak{B}$.
Hence, there is a function $\psi=(\psi_1,\cdots,\psi_n)$ with $\lim_{x\to x_0}|\psi|(x)=0$ so that
$\la_i+\ep(n+1-i)(1-\psi_i)$ is the $i$-th eigenvalue of $D^2u+\ep\g\tilde{\mathfrak{B}}\g^T$. 
For each $t\in[0,1]$, we define
$$\mathfrak{M}_\ep(t)=\g\Xi\g^T+\ep\g\tilde{\mathfrak{B}}\g^T+t\ep\left(\mathfrak{B}-\g\tilde{\mathfrak{B}}\g^T\right).$$
Let $\mu_1(t)\ge\cdots\ge\mu_n(t)$ denote the eigenvalues of $\mathfrak{M}_\ep(t)$, and  $\g_i(t)$ be an eigenvector w.r.t. $\mu_i(t)$ so that $(\g_1(t),\cdots,\g_n(t))$ is orthonormal.
From \eqref{pjlam*}, it follows that
\begin{equation}\aligned\label{pjlam***}
\p_t\mu_i(t)=\left\lan\g_i(t),\ep\left(\mathfrak{B}-\g\tilde{\mathfrak{B}}\g^T\right)\g_i(t)\right\ran.
\endaligned
\end{equation}
From Newton-Leibniz formula for each $\mu_i(t)$ using \eqref{pjlam***},
we conclude that there is a function $\tilde{\psi}_{,\ep}=(\tilde{\psi}_{1,\ep},\cdots,\tilde{\psi}_{n,\ep})$ with $\lim_{x\to x_0}|\tilde{\psi}_{,\ep}|(x)=0$ uniformly independent of $\ep$ so that
$\tilde{\la}_{i,\ep}:=\la_i+\ep(n+1-i)(1-\tilde{\psi}_{i,\ep})$ is the $i$-th eigenvalue of $D^2\tilde{u}_\ep$. 
Hence, there is a constant $0<\de<1-|x_0|$ (depending only on $x_0$ and $u$) so that $|\tilde{\psi}_{,\ep}|<\f1{2n}$ on $B_\de(x_0)$. This implies
$$(n+1-i)(1-\tilde{\psi}_{i,\ep})>(n-i)(1-\tilde{\psi}_{i+1,\ep})\qquad \mathrm{on}\ B_\de(x_0)$$
for each $i=1,\cdots,n-1$. Then it follows that $\tilde{\la}_{1,\ep}>\tilde{\la}_{2,\ep}>\cdots>\tilde{\la}_{n,\ep}$ on $B_\de(x_0)$. The condition $\la_{m}/2>\la_{m+1}\ge1$ on $B_{1}$ infers that $\tilde{\la}_{m,\ep}/2\ge\tilde{\la}_{m+1,\ep}\ge1$ on $B_\de(x_0)$ for all sufficiently small $\ep>0$.
Moreover, the phase $\tilde{\th}_\ep$ of $G_{D\tilde{u}_\ep}$ satisfies $\tilde{\th}_\ep>\th$ on $B_\de(x_0)$ for each $\ep>0$.
From Lemma \ref{Jacobiv*} (for the case of $\tilde{u}_\ep$ instead of $u$) and the arbitrary $x_0$, we deduce that
\eqref{DeLogvmlowdb*} holds in the distribution sense on $B_1$ by letting $\ep\to0$.

Let $Y=\sum_{j=1}^nY_jE_j$ be a vector field on $B_1$ defined by
$$Y_j:=\sum_{i=1}^{m}\sum_{k=1}^n\f{\la_i\th_k}{1+\la_i^2}\g_{ji}\g_{ki}\qquad \mathrm{for\ each}\ j\in\{1,\cdots,n\}.$$
From $(D^2u)\g=\g \Xi$ with $\g=(\g_{ij})$, one has
\begin{equation}\aligned\label{Yjxjululj}
&\left|\sum_{j=1}^nY_j\left(x_j+\sum_{l=1}^nu_lu_{lj}\right)\right|\le c_n|x|\mathbf{Lip}\th+\left|\sum_{i=1}^{m}\sum_{j,k,l=1}^n\f{\la_i\th_k}{1+\la_i^2}\g_{ji}\g_{ki}u_lu_{lj}\right|\\
\le&c_n\La|x|+\left|\sum_{i=1}^{m}\sum_{k,l=1}^n\f{\la_i\th_k}{1+\la_i^2}\g_{ki}u_l\g_{li}\la_i\right|\le c_n\La|x|+c_n\La|Du|,
\endaligned
\end{equation}
where $c_n$ is a general constant depending only on $n$.
Let $c_{\k,\La}\ge1$ be a general constant depending only on $n,\La,\k$, which may change from line to line.
For each $r\in(0,1/2]$, let $\Om_r=\{x\in B_r: |x|^2+|Du(x)|^2<r^2\}$.
We multiply $r^2-|\mathbf{x}|^2$ on both sides of \eqref{DeLogvmlowdb*}, then with \eqref{VolGDur} \eqref{Yjxjululj} integrating (on $L\cap \mathbf{B}_r$) by parts  gives
\begin{equation}\aligned\label{r2-x2Delogvm}
&-\int_{L\cap \mathbf{B}_r}(r^2-|\mathbf{x}|^2)\De_L\log v_m\le -\int_{L\cap \mathbf{B}_r}\f{r^2-|\mathbf{x}|^2}v\mathrm{div}_{\R^n}(Yv)+\f{\La^2}{\de_n}\int_{L\cap \mathbf{B}_r}r^2\\
\le&
c_{\k,\La}r^{n+2}-\sum_{j=1}^n\int_{\Om_r}Y_jv\p_{x_j}|\mathbf{x}|^2\le c_{\k,\La}r^{n+2}+\int_{L\cap\mathbf{B}_r}c_\La r \le c_{\k,\La}r^{n+1}.
\endaligned
\end{equation}
Let $\e_r\ge0$ be a Lipschitz function on $[0,\infty)$ with compact support in $[0,r]$ so that $\e_r\equiv1$ on $[0,r/2]$, $\e_r(s)=2(r-s)/r$ for each $s\in(r/2,r]$. 
Set $\varphi_r(x,Du(x))=\e_r(|x|)$. Using \eqref{DeLogvmlowdb*} and Lemma \ref{Intfv} (with $\phi\equiv1$) we obtain
\begin{equation}\aligned\label{varrnalogvmDeL}
\de_n\int_L\varphi_r^2|\na\log v_m|^2\le& -\int_L\f{\varphi_r^2}v\mathrm{div}_{\R^n}(Yv)+\int_L\varphi_r^2\De_L\log v_m+\f{\La^2}{\de_n}\int_L\varphi_r^2\\
\le&
c_{\k,\La}r^n+\sum_{j=1}^n\int_{\R^n}Y_jv\p_{x_j}\varphi_r^2-2\int_L\varphi_r\lan\na\varphi_r,\na\log v_m\ran\\
\le& c_{\k,\La}r^n+\int_{B_r}\f{c_\La}rv+\f{\de_n}2\int_L\varphi_r^2|\na\log v_m|^2+\f1{\de_n}\int_L|\na\varphi_r|^2,
\endaligned
\end{equation}
which implies
\begin{equation}\aligned\label{nalogv*2}
\int_{B_{r/2}}|\na\log v_m|^2v\le c_{\k,\La}r^{n-2}.
\endaligned
\end{equation}
From \eqref{MVder} and \eqref{r2-x2Delogvm}, we have
\begin{equation}\aligned\label{AppMVI}
\log v_m(\mathbf{0})\le&\f{8^n}{\omega_n}\int_{L\cap \mathbf{B}_{1/8}}\log v_m-\int_0^{1/8} \f{1}{\omega_nr^{n+1}}\int_{L\cap\mathbf{B}_r}\lan\mathbf{x},\na\log v_m+H_L\log v_m\ran dr\\
\le&\f{8^n}{\omega_n}\int_{L\cap \mathbf{B}_{1/8}}\log v_m+\int_0^{1/8} \f{\La}{\omega_n r^n}\int_{L\cap\mathbf{B}_r}\log v_m\, dr\\
&-\int_0^{1/8}  \f{1}{2\omega_nr^{n+1}}\int_{L\cap\mathbf{B}_r}(r^2-|\mathbf{x}|^2)\De_L\log v_m\, dr.
\endaligned
\end{equation}
Combining Lemma \ref{Intfv}, \eqref{r2-x2Delogvm}, \eqref{nalogv*2} and \eqref{AppMVI}, we complete the proof.
\end{proof}
\begin{remark}
Let $(u,\th)\in\mathbb{F}_n(0,\k,r)$ for some $\k\ge0$, $r>0$, $n\ge3$, and $\la_1\ge\cdots\ge\la_n$ denote eigenvalues of $D^2u$. The Hessian estimate $|D^2u(0^n)|$ has already been obtained by Wang-Yuan \cite{WdY}.
Here, we can get the estimate $|D^2u(0^n)|$ through the proof of Theorem \ref{Hesslan-2} without the condition $\la_{m}/2>\la_{m+1}\ge1$ on $B_{r}$. Because the Jacobi inequality in the form of \eqref{DeLogvmlowdb*} can be derived without Lemma \ref{divthk*} when $\th$ is a constant.
\end{remark}

Moreover, we consider another situation as follows.
\begin{lemma}\label{DeLlogvB1ge}
Let $(u,\th)\in\mathbb{F}_n(\La,\k)$ for some $\La,\k\ge0$, $n\ge2$, and $\la_1\ge\cdots\ge\la_n$ denote eigenvalues of $D^2u$.
If $\la_{n-1}\ge 2n^2/\tau$ and $\la_{n}\ge-1/\tau$ on $B_{1}$ for some $\tau\in(0,1]$, there is a constant $\de^*_{n}\in(0,1)$ depending only on $n$ so that
\begin{equation}\aligned\label{Jacobiv*+1}
\De_L\log v-\de^*_{n}|\na\log v|^2\ge\f{1}v\sum_{i,j,k=1}^n\p_{j}\left(vg^{ij}u_{ik}\th_k\right)-\f{\La^2}{\de^*_{n}}\qquad \mathrm{on}\ B_{1}.
\endaligned
\end{equation}
\end{lemma}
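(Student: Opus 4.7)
The strategy is to adapt Lemma \ref{Jacobiv*} to the case $m=n$, with the hypotheses $\lambda_{n-1}\ge 2n^2/\tau$ and $\lambda_n\ge -1/\tau$ playing the role of the gap condition $\lambda_m/2\ge\lambda_{m+1}\ge 1$. First I apply Lemma \ref{DeMla_1=} with $m=n$: every sum indexed by $\{k\le m<l\}$, $\{l\le m<k\}$, $\{i<j\le m<k\}$ or $\{i\le m<j<k\}$ becomes empty, leaving
\begin{equation*}
\Delta_L\log v=\sum_{k=1}^n T_k+2\!\!\sum_{1\le i<j<k\le n}\!\!(3+\lambda_i\lambda_j+\lambda_j\lambda_k+\lambda_k\lambda_i)h_{ijk}^2+\sum_i\frac{\lambda_i\theta_{\gamma_i\gamma_i}}{1+\lambda_i^2}-\sum_i\frac{\lambda_i\theta_{\gamma_i}}{1+\lambda_i^2}D_{\gamma_i}\log v,
\end{equation*}
where $T_k:=(1+\lambda_k^2)h_{kkk}^2+\sum_{i\ne k}(3+\lambda_i^2+2\lambda_i\lambda_k)h_{iik}^2$. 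The triple-index sum is $\ge 0$ by \eqref{laijkge0}, so I discard it.

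Using \eqref{lailai2thgigi*} at $m=n$ together with \eqref{laithgigi}, the $\theta_{\gamma_i\gamma_i}$ sum becomes exactly $\tfrac1v\sum_{i,j,k}\partial_j(vg^{ij}u_{ik}\theta_k)$ minus a correction, and by \eqref{thkDeLuk} plus the mean-curvature identity $\sum_i h_{iij}=\theta_{\gamma_j}/\sqrt{1+\lambda_j^2}$ this correction collapses to $\sum_j \theta_{\gamma_j}^2/(1+\lambda_j^2)\in[0,\Lambda^2]$. For the remaining cross term I use $|\nabla\log v|^2=\sum_i(D_{\gamma_i}\log v)^2/(1+\lambda_i^2)$ coming from $g^{-1}=\gamma\tilde{g}^{-1}\gamma^T$, so weighted Cauchy--Schwartz and AM--GM give $\bigl|\sum_i\tfrac{\lambda_i\theta_{\gamma_i}}{1+\lambda_i^2}D_{\gamma_i}\log v\bigr|\le\Lambda|\nabla\log v|\le\tfrac{\delta_n^*}{2}|\nabla\log v|^2+\tfrac{\Lambda^2}{2\delta_n^*}$.

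The heart of the argument is the pointwise quadratic bound $\sum_k T_k\ge 2\delta_n^*|\nabla\log v|^2-C_n\Lambda^2/\tau^2$ for some $\delta_n^*\in(0,1)$ and $C_n>0$ depending only on $n$. For each $i\le n-1$ and every $k$, the relevant coefficient in $T_k$ is at least $\lambda_i^2/2$: when $k\le n-1$ this is immediate from $\lambda_i\lambda_k>0$, and when $k=n$ one uses $\lambda_i\ge 2n^2/\tau$ together with $\lambda_n\ge -1/\tau$ to get $\lambda_i^2+2\lambda_i\lambda_n\ge\lambda_i^2(1-1/n^2)$. The delicate terms are $(3+\lambda_n^2+2\lambda_n\lambda_k)h_{nnk}^2$ with $k\le n-1$ and $\lambda_n<0$. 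I plan to handle them by substituting $h_{nnk}=\theta_{\gamma_k}/\sqrt{1+\lambda_k^2}-\sum_{i\le n-1}h_{iik}$ from the mean-curvature identity and applying Cauchy--Schwartz with weights $\lambda_i^{-2}$, exploiting $\sum_{i\le n-1}\lambda_i^{-2}\le(n-1)\tau^2/(4n^4)$. In parallel, writing $\sum_i\lambda_i h_{iil}=\sum_{i\le n-1}(\lambda_i-\lambda_n)h_{iil}+\lambda_n\theta_{\gamma_l}/\sqrt{1+\lambda_l^2}$ in $|\nabla\log v|^2$ bounds it by $C_n\sum_{i\le n-1,l}\lambda_i^2 h_{iil}^2+C_n\Lambda^2/\tau^2$, exposing the matching good term already controlled by $\sum_k T_k$.

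The main obstacle is the precise bookkeeping: the negative coefficient $(3+\lambda_n^2+2\lambda_n\lambda_k)$ grows linearly in $\lambda_k$, which is unbounded from above, while the positive matching $\lambda_i^2 h_{iik}^2$ are only quadratically large in $\lambda_i\ge 2n^2/\tau$. The precise constant $2n^2$ in the hypothesis is tuned so that the ratios appearing after the weighted Cauchy--Schwartz balance out with dimensional constants, the residual loss being absorbed into $-C_n\Lambda^2/\tau^2$. Combining the quadratic inequality with the divergence rewriting and the cross-term bound then yields \eqref{Jacobiv*+1} for a suitable $\delta_n^*\in(0,1)$ depending only on $n$.
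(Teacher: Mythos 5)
Your overall scheme — use Lemma \ref{DeMla_1=} at $m=n$, rewrite $\sum_i\tfrac{\la_i}{1+\la_i^2}\th_{\g_i\g_i}$ via \eqref{laithgigi} and \eqref{thkDeLuk}, and bound the first-order cross term by weighted Cauchy--Schwartz — follows the right template, and the observation that the divergence term becomes $\tfrac1v\sum_{i,j,k}\p_j(vg^{ij}u_{ik}\th_k)$ with the correction collapsing to $\sum_j\th_{\g_j}^2/(1+\la_j^2)\le\La^2$ is correct. However, the heart of your argument has a genuine gap. When you absorb the bad coefficient $(3+\la_n^2+2\la_n\la_k)h_{nnk}^2\gtrsim-\tfrac{2\la_k}{\tau}h_{nnk}^2$ ($k\le n-1$) by substituting for $h_{nnk}$ and applying Cauchy--Schwartz with weights $\la_i^{-2}$, you produce the term $-\tfrac{(n-1)\la_k\tau}{n^4}\sum_{i\le n-1}\la_i^2h_{iik}^2$. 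To absorb this into the good coefficient you quoted, namely $\la_i^2/2$, you would need $\la_k\tau\le\tfrac{n^4}{2(n-1)}$; but $\la_k$ (in particular $\la_1$) has no upper bound, so for $k=1$ this fails. Even if you upgrade the good coefficient to its actual size $\ge 2\la_i\la_k$ for $i\ne k$, the $\la_i^{-2}$-weighted substitution still gives a bad-to-good ratio of order $\tau\la_i$, which is again unbounded. In short, the weights $\la_i^{-2}$ are mis-scaled: the bad coefficient grows like $\la_k$ while $\sum\la_i^{-2}$ only gives back a factor $\tau^2$, not $1/\la_k$.

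The claimed residual $-C_n\Lambda^2/\tau^2$ is also a warning sign: the lemma requires a constant $\de^*_n$ depending only on $n$, so the correct residual must be $\tau$-free. The same $\Lambda^2/\tau^2$ appears in your bound of $|\na\log v|^2$ through the unnecessary split $\sum_i\la_ih_{iil}=\sum_{i\le n-1}(\la_i-\la_n)h_{iil}+\la_n\th_{\g_l}/\sqrt{1+\la_l^2}$; the paper instead uses the direct bound $|\na\log v|^2\le n\sum_{i,l}\la_i^2h_{iil}^2$ from \eqref{|nalogv*|2}, which is $\tau$-free. The paper's treatment of $T_k^*$ for $k\le n-1$ uses a different normalization that makes the balance transparent: all the good coefficients $(1+\tfrac12\la_k^2)$ and $(3+\tfrac12\la_i^2+2\la_i\la_k)$ ($i\le n-1$) are bounded below by $\tfrac12\la_{n-1}\la_k$, while the bad coefficient satisfies $2\la_n\la_k\ge-\tfrac{2\la_k}{\tau}\ge-\tfrac{\la_{n-1}\la_k}{n^2}$. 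Both are proportional to the single scale $\la_{n-1}\la_k$, so after using $\sum_{i\le n-1}h_{iik}^2\ge\tfrac1{n-1}\bigl(h_{nnk}-\th_{\g_k}/\sqrt{1+\la_k^2}\bigr)^2$ and completing the square, the factor $\la_{n-1}\la_k$ cancels against $1+\la_k^2$ and one is left with $T_k^*-\tfrac12\sum_i\la_i^2h_{iik}^2\ge-c_*\th_{\g_k}^2$ for an absolute constant $c_*(n)$, with no $\tau$. You should also note that Lemma \ref{DeMla_1=} is only stated for $1\le m\le n-1$ with $\la_m>\la_{m+1}$; the paper applies it at ``$m=n$'' only after passing to the auxiliary function $\tilde u(x,x_{n+1})=u(x)-\tfrac{s}{2}x_{n+1}^2$ in dimension $n+1$, which your sketch omits.
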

\begin{proof}
We consider a function $\tilde{u}(x,x_{n+1})=u(x)-\f s2x_{n+1}^2$ on $B_{1}\times\R\subset\R^{n+1}$ with a large $s>0$ so that $sI_n+D^2u>0$ on $B_{1}$. From \eqref{distincteachla}, the Hessian of $\tilde{u}(x,x_{n+1})$ has $(n+1)$ eigenvalues $\la_1(x)>\la_2(x)\cdots>\la_n(x)>-s$. Noting that the graph $G_{D\tilde{u}}\subset\R^{2(n+1)}$ is a product of $G_{Du}$ and $\R$. Hence, if we define $\tilde{h}_{ijk}$ being the components of the second fundamental form of $G_{D\tilde{u}}$ in the manner of \eqref{hijk}, then $\tilde{h}_{ij\, n+1}$ is equal to zero for each $i,j=1,\cdots,n$ with $\g_{n+1}=(0,\cdots,0,1)^T$. Therefore, from Lemma \ref{DeMla_1=} (with $m=n$ for the case of $\tilde{u}$ on $B_{1}\times\R)$, with \eqref{laijkge0} we obtain
\begin{equation}\aligned
\De_L\log v\ge&\sum_{k=1}^{n}(1+\la_k^2)h_{kkk}^2+\sum_{\stackrel{i,k=1}{i\neq k}}^{n}(3+\la_i^2+2\la_i\la_k)h_{iik}^2\\
&+\sum_{i=1}^{n}\f{\la_i}{1+\la_i^2}\th_{\g_i\g_i}-\sum_{i=1}^{n}\f{\la_i}{1+\la_i^2}\th_{\g_i}D_{\g_i}\log v.
\endaligned
\end{equation}
For $1\le k\le n$, let 
\begin{equation}\aligned\label{Tklen-1**}
T_k^*:=(1+\la_k^2)h_{kkk}^2+\sum_{i\le n,i\neq k}(3+\la_i^2+2\la_i\la_k)h_{iik}^2.
\endaligned
\end{equation} 
From $\la_{n-1}\ge 2n^2/\tau$ and $\la_{n}\ge-1/\tau$, using Cauchy-Schwartz inequality we have 
\begin{equation}\aligned
&T_k^*-\f12\sum_i\la_i^2h_{iik}^2\ge\f12\sum_{i\le n-1}\la_i\la_kh_{iik}^2+2\la_k\la_nh_{nnk}^2\\
\ge&\f12\la_k\la_{n-1}\sum_{i\le n-1}h_{iik}^2-\f{\la_k\la_{n-1}}{n^2}h_{nnk}^2
\ge \f{\la_k\la_{n-1}}{2(n-1)}\left(\sum_{i\le n-1}h_{iik}\right)^2-\f{\la_k\la_{n-1}}{n^2}h_{nnk}^2
\endaligned
\end{equation} 
for $1\le k\le n-1$. From \eqref{sumhiiki*},  we get
\begin{equation}\aligned
T_k^*-\f12\sum_{i\le n}\la_i^2h_{iik}^2\ge \f{\la_k\la_{n-1}}{2n(n-1)}\left(\f{n^2-2n+2}{n}h_{nnk}^2-n\f{2h_{nnk}\th_{\g_k}}{\sqrt{1+\la_k^2}}\right)\ge- c_*\th_{\g_k}^2,
\endaligned
\end{equation}
where $c_*$ is an absolute constant.
Clearly, $T_n^*\ge\f12\sum_i\la_i^2h_{iin}^2$.
Combining \eqref{laithgigi}\eqref{thkDeLuk} and \eqref{|nalogv*|2} (with $v_m$ replaced by $v$), we can finish the proof using Cauchy-Schwartz inequality, compared with \eqref{DeLlogv*Lipth*}.
\end{proof}
\begin{remark}
In fact, the 'distinct' condition \eqref{distincteachla} is unnecessary in Lemma \ref{DeLlogvB1ge}.
\end{remark}

Analog to the proof of Theorem \ref{Hesslan-2} (with \eqref{DeLogvmlowdb*} replaced by \eqref{Jacobiv*+1}), we immediately have the following estimate.
\begin{corollary}\label{Hessvspe}
Let $(u,\th)\in\mathbb{F}_n(\La,\k,r)$ for some $\La,\k\ge0$, $r>0$, $n\ge2$, and $\la_1\ge\cdots\ge\la_n$ denote eigenvalues of $D^2u$. If $\la_{n-1}\ge 2n^2/\tau$ and $\la_{n}\ge-1/\tau$ on $B_{r}$ for some $\tau\in(0,1]$, then there exists a constant $c(n,\k,\La,\tau)>0$ depending only on $n,\k,\La,\tau$ so that
$$|D^2u|(0^n)\le c(n,\k,\La,\tau).$$
\end{corollary}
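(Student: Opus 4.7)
The plan is to run the proof of Theorem \ref{Hesslan-2} almost verbatim, replacing the Jacobi inequality \eqref{DeLogvmlowdb*} by \eqref{Jacobiv*+1} and $v_m$ by $v$. All the $\tau$-dependence of the final constant enters through the constants in Lemma \ref{DeLlogvB1ge}.

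First I would rescale so that $r=1$. To apply Lemma \ref{DeLlogvB1ge} cleanly I would use the same perturbation $\tilde u_\ep(x)=u(x)+\f{\ep}{2}\sum_{i=1}^n(n+1-i)x_i^2$ as in Theorem \ref{Hesslan-2}: on a small ball around any fixed $x_0\in B_1$ the eigenvalues of $D^2\tilde u_\ep$ become strictly separated via the Newton-Leibniz argument used there, the perturbed phase $\tilde\th_\ep$ is close to $\th$ in Lipschitz norm, and the conditions $\tilde\la_{n-1,\ep}\ge 2n^2/\tau$ and $\tilde\la_{n,\ep}\ge -1/\tau$ persist for all sufficiently small $\ep>0$. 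Applying Lemma \ref{DeLlogvB1ge} to $\tilde u_\ep$ and letting $\ep\to 0^+$ yields \eqref{Jacobiv*+1} for $u$ in the distributional sense on $B_1$.

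Next I would define the vector field $Y=\sum_j Y_jE_j$ on $B_1$ by $Y_j:=\sum_{i,k=1}^n g^{ij}u_{ik}\th_k$, which by \eqref{laithgigi} equals $\sum_{i,k}\f{\la_i}{1+\la_i^2}\g_{ji}\g_{ki}\th_k$. Repeating the estimate \eqref{Yjxjululj} gives $\left|\sum_j Y_j(x_j+\sum_l u_l u_{lj})\right|\le c_n\La(|x|+|Du|)\le c_{n,\k,\La}$ on $B_1$. Multiplying \eqref{Jacobiv*+1} by $r^2-|\mathbf{x}|^2$, integrating over $L\cap\mathbf{B}_r$, integrating by parts in $\R^n$, and using the volume bound \eqref{VolGDur} reproduces the analogue of \eqref{r2-x2Delogvm}:
\begin{equation*}
-\int_{L\cap\mathbf{B}_r}(r^2-|\mathbf{x}|^2)\De_L\log v\le c_{n,\k,\La,\tau}\,r^{n+1}.
\end{equation*}
Testing \eqref{Jacobiv*+1} against a cutoff $\varphi_r^2$ adapted to $B_r$, integrating by parts, and invoking Lemma \ref{Intfv} with $\phi\equiv 1$ yields, as in \eqref{varrnalogvmDeL}--\eqref{nalogv*2}, the gradient estimate $\int_{B_{r/2}}|\na\log v|^2\,v\le c_{n,\k,\La,\tau}\,r^{n-2}$.

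Finally I would feed these two bounds into the almost monotonicity formula \eqref{MVder} applied with $\phi=\log v$ and $|H_L|\le\La$, mirroring \eqref{AppMVI}, to conclude $\log v(\mathbf{0})\le c_{n,\k,\La,\tau}$. Since $v(\mathbf{0})=\prod_i\sqrt{1+\la_i^2(0^n)}$ and each factor is at least $1$, one obtains $\sqrt{1+\la_i^2(0^n)}\le e^{c_{n,\k,\La,\tau}}$ for every $i$, hence $|D^2u|(0^n)$ is bounded as claimed. The only nontrivial point to watch is the $\tau$-dependence of the constants, which propagates from the absorption constant $c_*$ in the proof of Lemma \ref{DeLlogvB1ge} and from checking that the perturbation $\tilde u_\ep$ preserves the hypotheses $\la_{n-1}\ge 2n^2/\tau$ and $\la_n\ge -1/\tau$; both are routine.
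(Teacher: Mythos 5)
Your proposal is correct and matches the paper's argument: the paper itself states Corollary \ref{Hessvspe} follows ``analog to the proof of Theorem \ref{Hesslan-2} (with \eqref{DeLogvmlowdb*} replaced by \eqref{Jacobiv*+1}),'' which is precisely the substitution you carry out (using the full $v$ in place of $v_m$, the vector field $Y_j=\sum_{i,k}g^{ij}u_{ik}\th_k$, and then \eqref{Yjxjululj}, \eqref{r2-x2Delogvm}--\eqref{nalogv*2}, and \eqref{AppMVI} verbatim). The $\ep$-perturbation step is also consistent with the paper's treatment, though Remark 5.8 notes the distinctness condition is in fact unnecessary for Lemma \ref{DeLlogvB1ge}, so that step could be skipped.
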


\section{A rigidity theorem for special Lagrangian submanifolds}

Let $(u,\th)\in\mathbb{F}_n(0,\k,r)$ for some $\k\ge0$, $r>0$ and $n\ge2$. Then $\th$ is a constant, and 
$M=\{(x,Du(x))\in\R^n\times\R^n|\ x\in B_r\}$ is a special Lagrangian graph. Let $\la_1\ge\cdots\ge\la_n$ denote the eigenvalues of $D^2u$.
We define $\{e_i\}_{i=1}^n$, $\{\nu_i\}_{i=1}^n$, $h_{ijk}$ as in \eqref{einui} and \eqref{hijk}, respectively.
Let $\De_M$ denote the Laplacian of $M$, and $\na$ denote the Levi-Civita connection of $M$. 
We define $v_m$ as in \eqref{vm*} for some $m\in\{1,\cdots, n-1\}$.

Compared with Lemma \ref{Jacobiv*}, we have a stronger estimate as follows under  constraints different from one in Lemma \ref{Jacobiv*}.
\begin{lemma}\label{DeMla_1a}
Given $\a\in(0,1)$ and $\la_*>0$, there exists a constant $\la^*>\la_*$ depending on $n,\a,\la_*$ so that if $\la_m\ge\la^*$, $|\la_{m+1}|\le\la_*$ on $B_r$, then $v_m^{-\tau}$ is smooth and
\begin{equation}\aligned
\De_M v_m^{-\tau}\le(1-\a)\tau\sum_{k=1}^m\la_k^2h_{kkk}^2v_m^{-\tau}+(m\tau-\a)\tau\sum_{k=1}^n\sum_{i=1}^{m}\la_i^2h_{iik}^2v_m^{-\tau}
\endaligned
\end{equation}
for each $\tau\in(0,1)$.
In particular, $v_m^{(1-2\a)/m}$ is superharmonic on $B_r$.
\end{lemma}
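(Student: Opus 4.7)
The plan is to reduce the claimed inequality for $v_m^{-\tau}$ to a single cleaner estimate on $\Delta_M \log v_m$, then to verify the latter by inspecting Lemma \ref{DeMla_1=} under the spectral gap $\lambda_m \geq \lambda^* \gg \lambda_* \geq |\lambda_{m+1}|$. For smoothness, the gap ensures that the orthogonal projection onto the top-$m$ eigenspace of $D^2 u$ varies smoothly on $B_r$, so $v_m^2 = \prod_{i=1}^m(1+\lambda_i^2)$, a smooth symmetric polynomial in the top-$m$ eigenvalues, is a smooth positive function; hence $v_m^{-\tau}$ is smooth. The chain rule gives
\begin{equation*}
\Delta_M v_m^{-\tau} = -\tau\, v_m^{-\tau}\,\Delta_M \log v_m + \tau^2\, v_m^{-\tau}\,|\nabla \log v_m|^2.
\end{equation*}
Writing $S_1 := \sum_{k=1}^m \lambda_k^2 h_{kkk}^2$ and $S_2 := \sum_{k=1}^n\sum_{i=1}^m \lambda_i^2 h_{iik}^2$, the bound $|\nabla \log v_m|^2 \leq m S_2$ from \eqref{|nalogv*|2} reduces the first claim to proving $\Delta_M \log v_m \geq \alpha S_2$; indeed, that estimate yields $\Delta_M v_m^{-\tau} \leq \tau(m\tau-\alpha)\,S_2\,v_m^{-\tau}$, which differs from the asserted right-hand side by the non-negative quantity $(1-\alpha)\tau\,S_1\,v_m^{-\tau}$.

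To prove $\Delta_M \log v_m \geq \alpha S_2$, I would apply Lemma \ref{DeMla_1=} with $\theta$ constant (so the last two terms drop) and analyze each remaining term by splitting indices into the top block $\{1,\dots,m\}$ and the bottom block $\{m+1,\dots,n\}$. The diagonal $\sum_{k\leq m}(1+\lambda_k^2)h_{kkk}^2$ and the intra-top sum $\sum_{i\neq k\leq m}(3+\lambda_i^2+2\lambda_i\lambda_k)h_{iik}^2$ (using $\lambda_k>0$) are each bounded below by the corresponding pieces of $S_2$, while the top-over-bottom rational sum $\sum_{l\leq m<k}\frac{3\lambda_l-\lambda_k+\lambda_l^2(\lambda_l+\lambda_k)}{\lambda_l-\lambda_k}h_{llk}^2$ has coefficient $\lambda_l^2(1 + O(\lambda_*/\lambda^*))$, reproducing the remaining part of $S_2$ up to an $O(\lambda_*/\lambda^*)$ error. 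The "bad" bottom-over-top sum $\sum_{k\leq m<l}\frac{2\lambda_k(1+\lambda_k\lambda_l)}{\lambda_k-\lambda_l}h_{llk}^2$ has coefficient of size $O(\lambda_*\lambda^*)$, which is small relative to the $(\lambda^*)^2$-size good terms. The three-index cross sums contribute either non-negatively at leading order, via \eqref{laijkge0} for all-top triples and dominant $\lambda_i\lambda_j$ for two-top-one-bottom triples, or as $O(\lambda_*\lambda^*)$ perturbations for one-top-two-bottom triples. Aggregating the errors and choosing $\lambda^* = \lambda^*(n,\alpha,\lambda_*)$ so that their combined effect is at most $(1-\alpha)S_2$ gives the desired estimate.

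The main technical obstacle is the rational coefficient $\tfrac{1+\lambda_k^2}{\lambda_j-\lambda_k}$ appearing in the last mixed sum of Lemma \ref{DeMla_1=}, whose "bottom-bottom" denominator can be small when two bottom eigenvalues coincide. I would circumvent this by the perturbation used in the proof of Theorem \ref{Hesslan-2}: replace $u$ with $\tilde{u}_\epsilon = u + \tfrac{\epsilon}{2}\sum_i(n+1-i)x_i^2$, which makes the eigenvalues of $D^2\tilde{u}_\epsilon$ strictly distinct for small $\epsilon > 0$ while preserving the hypotheses $\tilde\lambda_m\geq\lambda^*-O(\epsilon)$ and $|\tilde\lambda_{m+1}|\leq\lambda_*+O(\epsilon)$, prove the pointwise inequality for $\tilde u_\epsilon$, and pass to the limit $\epsilon\to 0^+$ to recover the inequality in the distributional sense on $B_r$. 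For the "in particular" statement, bounding $S_1\leq S_2$ in the first inequality gives $\Delta_M v_m^{-\tau} \leq (m\tau+1-2\alpha)\tau\,S_2\,v_m^{-\tau}$; choosing $\tau = (2\alpha-1)/m$, which lies in $(0,1/m]\subset(0,1)$ when $\alpha \in (1/2, 1)$, makes the coefficient vanish, so $v_m^{(1-2\alpha)/m} = v_m^{-\tau}$ is superharmonic.
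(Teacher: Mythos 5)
Your reduction to a single lower bound on $\De_M\log v_m$, the chain-rule step, the smoothness argument via the spectral gap, the perturbation to distinct eigenvalues, and the final superharmonicity observation for $\a>1/2$ are all sound and essentially parallel to the paper; in fact, the paper's proof establishes $\De_M\log v_m\ge\a\sum_{k,i}(1+\la_i^2)h_{iik}^2-(1-\a)\sum_k\la_k^2h_{kkk}^2$, which (as you implicitly note) implies the somewhat stronger $\De_M\log v_m\ge\a S_2$.

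However, your proposed proof of $\De_M\log v_m\ge\a S_2$ has a genuine gap in the treatment of the bottom-over-top sum $\sum_{k\le m<l}\f{2\la_k(1+\la_k\la_l)}{\la_k-\la_l}h_{llk}^2$. You argue this term is ``small relative to the $(\la^*)^2$-size good terms'' because its coefficient is $O(\la_*\la^*)$. But this is a comparison of coefficients of \emph{different} quadratic forms in the $h_{ijk}$: the monomials $h_{llk}^2$ with $l>m$, $k\le m$ do not appear anywhere in $S_2$ nor in the positive diagonal and intra-top sums, so there is nothing to cancel them against directly. For $\la_l$ close to $-\la_*$ and $\la_k\sim\la^*$, the coefficient of $h_{llk}^2$ is $\approx -2\la^*\la_*$, and $h_{llk}$ can be arbitrarily large and independent of all the $h_{iik}$ with $i\le m$. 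The missing ingredient is the minimality constraint $\sum_{i=1}^n h_{iik}=0$ for each $k\le m$, which the paper invokes exactly here (in the step \eqref{Tk1lahiik21-a}) to rewrite $\sum_{i<n}h_{iik}=-h_{nnk}$ and then use the Cauchy–Schwarz inequality $\sum_i a_ic_i^2 \ge (\sum_i c_i)^2/(\sum_i a_i^{-1})$ to absorb the worst negative term $\f{2\la_k^2\la_n}{\la_k-\la_n}h_{nnk}^2$ into the diagonal $h_{kkk}^2$ and intra-top $h_{iik}^2$ terms. Without that algebraic relation, the bottom-over-top sum cannot be controlled, and your term-by-term absorption argument fails. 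The same issue affects your ``$O(\la_*\la^*)$ perturbation'' handling of the one-top-two-bottom triplets, though there the paper relies on a separate nonnegativity assertion from Wang–Yuan's Lemma 2.1.
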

\begin{proof}
For $1\le k\le n$, we define $T_k$ as in \eqref{Tklen-2*}\eqref{Tklen-2*2}.
From Lemma \ref{DeMla_1=}, we have
\begin{equation}\aligned\label{W-Y}
\De_M\log v_m\ge&\sum_{k=1}^nT_k.
\endaligned
\end{equation}
Given $\a\in(0,1)$.
For each $1\le k\le m$, with Cauchy-Schwartz inequality we have
\begin{equation}\aligned\label{Tk1lahiik21-a}
&(1-\a)\la_k^2h_{kkk}^2+2\sum_{i\le m,i\neq k}\la_i\la_kh_{iik}^2+\sum_{m<l\le n-1}\f{2\la_k^2\la_l}{\la_k-\la_l}h_{llk}^2\\
\ge&\la_k\left(\sum_{i=1}^{n-1}h_{iik}\right)^2\left(\f {1}{(1-\a)\la_k}+\sum_{i\le m,i\neq k}\f{1}{2\la_i}+\sum_{m<l\le n-1}\f{\la_k-\la_l}{2\la_k\la_l}\right)^{-1}\\
\ge&\la_kh_{nnk}^2\left(\f {1}{(1-\a)\la_m}+\f{m-1}{2\la_m}+\sum_{m<l\le n-1}\f{1}{2\la_l}\right)^{-1},
\endaligned
\end{equation}
where we have used $\sum_{i=1}^nh_{iik}=0$ in the last inequality in \eqref{Tk1lahiik21-a} since $M$ is minimal.
From \eqref{sik***}, there holds $\sum_{i=m+1}^{n}\f{1}{2\la_i}<0$ for $\la_n<0$. By an argument of contradiction, there is a constant $\de_*>0$ depending only on $n-m$ and $\la_*$ so that for $|\la_{m+1}|\le\la_*$ there holds
\begin{equation}\aligned
\sum_{i=m+1}^{n}\f{1}{2\la_i}<-\de_*.
\endaligned
\end{equation}
Hence, there is a large $\la^*>\la_*$ so that for $\la_m\ge\la^*$ we have
\begin{equation}\aligned
\f {1}{(1-\a)\la_m}+\f{m-1}{2\la_m}+\sum_{m<l\le n-1}\f{1}{2\la_l}\le-\f1{2\la_n}\le-\f{\la_k-\la_n}{2\la_k\la_n}.
\endaligned
\end{equation}
This implies
\begin{equation}\aligned\label{Tkdeh111}
&T_k-\sum_{i\le m}(1+\la_i^2)h_{iik}^2+(1-\a)\la_k^2h_{kkk}^2\\
\ge&(1-\a)\la_k^2h_{kkk}^2+2\sum_{i\le m,i\neq k}\la_i\la_kh_{iik}^2+\sum_{m<l\le n-1}\f{2\la_k^2\la_l}{\la_k-\la_l}h_{llk}^2+\f{2\la_k^2\la_n}{\la_k-\la_n}h_{nnk}^2\ge0.
\endaligned
\end{equation}
Clearly, the above inequality \eqref{Tkdeh111} holds for $\la_n\ge0$.

For $m<k\le n$, up to choosing the large $\la^*>\la_*$, from $\la_m\ge\la^*$ we have
\begin{equation}\aligned\label{Tk111}
T_k\ge\sum_{i\le m}\left(1+\f{\la_i^2(\la_i+\la_n)}{\la_i-\la_n}\right)h_{iik}^2\ge\a\sum_{i\le m}(1+\la_i^2)h_{iik}^2.
\endaligned
\end{equation}
Substituting \eqref{Tkdeh111}\eqref{Tk111} into \eqref{W-Y} gives
\begin{equation}\aligned\label{DeMlogla1}
\De_M\log v_m\ge\a\sum_{k=1}^n\sum_{i=1}^m(1+\la_i^2)h_{iik}^2-(1-\a)\sum_{k=1}^m\la_k^2h_{kkk}^2.
\endaligned
\end{equation}
For each $\tau\in(0,1)$, with \eqref{|nalogv*|2} we have
\begin{equation}\aligned
&\De_M v_m^{-\tau}=\De_M e^{-\tau\log v_m}\\
=&-\tau e^{-\tau\log v_m}\De_M\log v_m+\tau^2 e^{-\tau\log v_m}\left|\na\log v_m\right|^2\\
\le&(1-\a)\tau\sum_{k=1}^m\la_k^2h_{kkk}^2v_m^{-\tau}-\tau\a\sum_{k=1}^n\sum_{i=1}^m\la_i^2h_{iik}^2v_m^{-\tau}+\tau^2v_m^{-\tau}\sum_{k=1}^n\left(\sum_{i=1}^{m}\la_ih_{iik}\right)^2\\
\le&(1-\a)\tau\sum_{k=1}^m\la_k^2h_{kkk}^2v_m^{-\tau}+(m\tau-\a)\tau\sum_{k=1}^n\sum_{i=1}^{m}\la_i^2h_{iik}^2v_m^{-\tau}.
\endaligned
\end{equation}
This completes the proof.
\end{proof}

For each $n\ge2$, 
let $\{M_\ell\}_{\ell\ge1}$ be a sequence of Lagrangian graphs in $\mathbf{B}_1\subset\R^{2n}$ with 
phases $\ge\f{n-2}2\pi$ so that the mean curvature $H_{M_\ell}$ satisfies $\lim_{\ell\to\infty}\sup_{M_\ell}|H_{M_\ell}|=0$.
We suppose that 
there is a smooth special Lagrangian submanifold $L$ in $\mathbf{B}_1$ so that
$M_\ell\cap K$ converges to $L\cap K$ in $C^{2,1/2}$-norm for any compact set $K$ in $\mathbf{B}_1$. 
\begin{theorem}\label{Fredholm}
Either $L$ is a graph, or $L$ is flat.
\end{theorem}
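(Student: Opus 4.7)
Plan. I argue by contradiction: suppose $L$ is not a graph, and show $L$ must be flat. The strategy combines the Allard--type regularity plus Harnack inequality for $w_{nn}$ from Section 4 (step~i) with the nonnegative superharmonic function $v_m^{(1-2\a)/m}$ from Lemma~\ref{DeMla_1a}.

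Setup. Since each $M_\ell$ has phase in $[(n-2)\pi/2, n\pi/2)$ and $\sup|H_{M_\ell}|\to0$, the limit $L$ is a smooth special Lagrangian with constant phase $\Th\in[(n-2)\pi/2,n\pi/2)$. At any $\mathbf{q}\in L$, the Jordan angles $\Th_1(\mathbf{q})\ge\cdots\ge\Th_n(\mathbf{q})\in[-\pi/2,\pi/2]$ of $T_\mathbf{q}L$ sum to $\Th$ mod $2\pi$; the lower bound on $\Th$ forces at most one of these to equal $-\pi/2$. Assuming $L$ is not a graph, there exists $\mathbf{p}\in L$ at which some Jordan angle equals $\pm\pi/2$; let $m\in\{1,\dots,n-1\}$ be the number with $\Th_i(\mathbf{p})=\pi/2$. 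On the graphic part $G_{Du}$ near $\mathbf{p}$, the eigenvalues of $D^2u$ satisfy $\la_i\to+\infty$ for $i\le m$.

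Step 1: reduce to the critical configuration. Suppose first $\Th_n(\mathbf{p})>-\pi/2$, so $|\la_{m+1}|,\dots,|\la_n|$ remain bounded on the graphic part near $\mathbf{p}$. By Lemma~\ref{DeMla_1a}, for $\a\in(1/2,1)$ close to $1$, the function $\phi_\a:=v_m^{(1-2\a)/m}$ is a nonnegative, smooth, superharmonic function on this graphic part, and $\phi_\a\to0$ as one approaches $\mathbf{p}$ (since $\la_m\to\infty$ and $(1-2\a)/m<0$). Combining the almost--monotonicity formula \eqref{MVder} applied to the $M_\ell$ with the uniform volume bound from Lemma~\ref{VolGDu}, and passing to the limit, the vanishing of the positive superharmonic $\phi_\a$ at the interior ``boundary'' point $\mathbf{p}$ together with the strong minimum principle would force $\phi_\a\equiv 0$ nearby --- which contradicts $\phi_\a>0$ on the graphic part. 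Hence $\Th_n(\mathbf{p})=-\pi/2$; combined with the phase constraint this forces $\Th=(n-2)\pi/2$ and the configuration at $\mathbf{p}$ is exactly $(\pi/2,\dots,\pi/2,-\pi/2)$.

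Step 2: apply Section 4's Harnack. At this critical configuration, I apply the Lewy--Yuan--type rotation $\mathfrak{R}_{\be^*}$ with $\be^*=(\pi/2,\dots,\pi/2,-\pi/2)$. Because $M_\ell\to L$ in $C^{2,1/2}$ and \eqref{A0vert} holds uniformly for $M_\ell$ near the corresponding points, the Allard regularity machinery of Section 4 applies uniformly to $\mathfrak{R}_{\be^*}(M_\ell)$ and passes to the limit: in a neighborhood of $\bar{\mathbf{p}}=\mathfrak{R}_{\be^*}(\mathbf{p})$, $\mathfrak{R}_{\be^*}(L)$ is the graph of $Dw$ for some smooth $w$ with $Dw(\bar{\mathbf{p}})=0$ and $D^2w(\bar{\mathbf{p}})=0$. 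By Lemma~\ref{wnnge0} the function $f=w_{nn}$ is nonnegative, and Theorem~\ref{Harnack} applies. From \eqref{hatlala}, $\hat{\la}_1=-1/\la_n$, which vanishes at $\bar{\mathbf{p}}$ since $\la_n\to-\infty$ at $\mathbf{p}$; hence $f(\bar{\mathbf{p}})=0$, and Harnack forces $f\equiv0$ in a neighborhood of $\bar{\mathbf{p}}$.

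Step 3: conclude flatness. The identity $w_{nn}\equiv0$ on a neighborhood, combined with the constant--phase special Lagrangian equation and the real--analyticity of smooth special Lagrangian submanifolds, forces $w$ to be affine in $x_n$ and, after iterating along each of the independent ``vertical'' directions at $\mathbf{p}$, forces $\mathfrak{R}_{\be^*}(L)$ to coincide with an affine Lagrangian plane. Thus $L$ itself is flat, contradicting our assumption.

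Main obstacle. The hardest step is Step~1: rigorously ruling out the supercritical configuration $\Th_n(\mathbf{p})>-\pi/2$ from the existence of the nonnegative superharmonic function $v_m^{(1-2\a)/m}$ vanishing at $\mathbf{p}$. This requires passing the superharmonicity through the limit $M_\ell\to L$, combining \eqref{MVder} with the integral estimates of Section 3 (Lemmas~\ref{VolGDu} and~\ref{Intfv}), and exploiting the strong minimum principle on the graphic part of $L$ to conclude that no nonnegative superharmonic function can vanish at an interior accumulation point without vanishing identically on a component.
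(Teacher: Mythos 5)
Your overall structure — ruling out the case $\Th_n(\mathbf{p})>-\pi/2$ first, then treating the fully vertical tangent plane via the Section 4 Harnack machinery — matches the paper's dichotomy, and Steps 2--3 of your sketch are essentially the paper's argument for the case \eqref{TpL0n}. The gap is in Step 1. The function $\phi_\a=v_m^{(1-2\a)/m}$ from Lemma~\ref{DeMla_1a} is superharmonic only on the open graphic part $M$ of $L$ near $\mathbf{p}$, not on any full neighborhood of $\mathbf{p}$ in $L$: the point $\mathbf{p}$ lies in $\mathcal{C}=L\setminus M$, which is the relative boundary of $M$ in $L$. The strong minimum principle rules out a superharmonic function attaining its infimum at a point \emph{interior} to the domain of superharmonicity; it says nothing about a positive superharmonic function tending to zero at a boundary point of that domain — indeed this is the generic behaviour of Green's functions. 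So there is no contradiction, and the case $\Th_n(\mathbf{p})>-\pi/2$ is not dispatched by your argument.

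The paper closes this case by an entirely different, and genuinely subtle, mechanism that your proposal does not contain. After choosing $\mathbf{p}\in\mathcal{C}$ to \emph{minimize} the multiplicity $\mathfrak{m}$ of $\Th_1$ over $\mathcal{C}$ (this minimality is what makes $\zeta^m=\prod_{i\le m}\cos\Th_i$ a smooth function across $\mathcal{C}\cap\mathbf{B}_{\de_p}(p)$, and is a choice missing from your setup), it Taylor-expands the smooth nonnegative function $\Phi=\zeta^m\circ\exp_\mathbf{p}$ at the origin: the hypothesis that $T_\mathbf{p}L$ is not vertical gives a finite vanishing order $m'$ and a nonzero leading homogeneous polynomial $P\ge0$ of degree $m'$ (which is even). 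The differential inequality of Lemma~\ref{DeMla_1a} transfers, via scaling and the homogeneity of $P$, $\De_{\R^n}P$, $|DP|^2$, to $\De_{\R^n}P^{\a_m}\le0$ off the nodal set $\mathcal{C}_P$; the crucial step is then to upgrade this to superharmonicity in the distribution sense on all of $\R^n$ by a cutoff argument, using that $m'\a_m>1$ and $\dim\mathcal{C}_P\le n-1$ so that $|DP^{\a_m}|$ is integrable. The sub-mean-value inequality for the globally superharmonic $P^{\a_m}\ge0$ with $P^{\a_m}(0^n)=0$ forces $P\equiv0$, contradicting that $m'$ is the exact vanishing order. None of this is a cosmetic rephrasing of your Step 1: the Taylor-polynomial reduction, the extension of superharmonicity across the nodal set, and the Liouville-type conclusion on $\R^n$ are each essential, and the "strong minimum principle on $M$" idea does not supply a substitute.
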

\begin{proof}
For each $z\in L$,
let $\Th_1(z),\cdots,\Th_n(z)$ denote the Jordan angle between $T_zL$ and $\R^n\times\{0^n\}$ with $\Th_1(z)\ge\cdots\ge\Th_n(z)$ (see $\S$ 7.1 in \cite{X} or $\S$ 2.1 in \cite{DJX} for more details). Noting $-\pi/2\le\Th_j\le\pi/2$ for each $j$ from $M_\ell\to L$. 
Let $M$ denote the subset of $L$ containing every point $z$ so that 
$\Th_1(z)<\pi/2,$
which implies $\Th_n(z)>-\pi/2$ from the assumption of the phase of $M_\ell$ $\ge(n-2)\pi/2$.
Thus, $M$ is a special Lagrangian graph. If $M=\emptyset$, then $L$ is clearly flat. Now we assume $M\neq\emptyset$  and $L\setminus M\neq\emptyset$.
By the equation of special Lagrangian submanifold, we conclude that $L=\overline{M}$.
Set 
$$\mathcal{C}:=L\setminus M.$$
For each $z\in L$, let $\mathfrak{m}(z)$ denote the multiplicity of $\Th_1(z)$, i.e.,
$$\mathfrak{m}(z)=\{i\in\{1,\cdots,n-1\}:\ \Th_1(z)=\cdots=\Th_i(z)>\Th_{i+1}(z)\},$$ 
which is a upper semicontinuous and integer-valued function.
Hence, the infimum of the multiplicity of $\Th_1$ on $\mathcal{C}$,
\begin{equation}\aligned
m:=\inf_{z\in \mathcal{C}}\mathfrak{m}(z),
\endaligned
\end{equation}
can be attained at some point $p\in \mathcal{C}$. Moreover, there is a constant $\de_p>0$ with $\mathbf{B}_{\de_p}(p)\subset\mathbf{B}_1$ so that $\mathfrak{m}(p)=m$, $\sup_{L\cap\mathbf{B}_{\de_p}(p)}\mathfrak{m}\le m$ and $\mathfrak{m}=m$ on $\mathcal{C}\cap\mathbf{B}_{\de_p}(p)$.

From the definition of $M$, $M\cap \mathbf{B}_{\de_p}(p)$ is a graph over some subset of $\R^n$ with the graphic function $Du$ for some smooth function $u$.
Let $\la_1,\cdots,\la_n$ denote the eigenvalues of $D^2u$ satisfying $\la_1\ge\la_2\ge\cdots\ge\la_n$.
Let $\g=(\g_1,\cdots,\g_n)$ be an orthonormal matrix-valued function so that $\la_i=D^2u(\g_i,\g_i)=\lan\g_i,(D^2u) \g_i\ran$.
Then 
$$\Th_i=\arctan\la_i\qquad\mathrm{and}\qquad \cos\Th_i=(1+\la_i^2)^{-1/2}\qquad \mathrm{on}\ M\cap \mathbf{B}_{\de_p}(p).$$
Let $\na$ denote the Levi-Civita connection of $L$, then we have
\begin{equation}\aligned\label{Deraiam}
\na\Th_i=\sum_{1\le j\le n} h_{iij}e_j\qquad\qquad \mathcal{H}^n-a.e.\ \mathrm{on}\ L\cap \mathbf{B}_{\de_p}(p),
\endaligned
\end{equation}
where $\{e_j\}$ is orthonormal defined in \eqref{einui}, and $h_{ijk}$ is defined in \eqref{hijk}.
In fact, at each differentiable point $z$ of $\Th_i$, from \eqref{einui}\eqref{hijk}\eqref{pjlam*} one has
\begin{equation*}\aligned
\na\Th_i=\sum_{j=1}^n(\na_{e_j}\Th_i) e_j=\sum_{j=1}^n\cos\Th_j(D_{\g_j}\Th_i) e_j=\sum_{j=1}^n\cos\Th_j\f{D^3u(\g_i,\g_i,\g_j)}{1+\la_i^2} e_j=\sum_{j=1}^n h_{iij}e_j.
\endaligned
\end{equation*}

From the definition of $m$, the function $\Th_*:=\sum_{i\le m}\Th_i$ is smooth on $L\cap \mathbf{B}_{\de_p}(p)$ with $\Th_*\equiv\f m2\pi$ on $\mathcal{C}\cap \mathbf{B}_{\de_p}(p)$. Hence, $|\na\Th_*|=0$ at each $z\in\mathcal{C}\cap \mathbf{B}_{\de_p}(p)$, which implies
\begin{equation}\aligned
0\ge\Th_*(z')-\Th_*(z)\ge-\psi_\Th(|z-z'|)|z-z'|\qquad\mathrm{for \ each}\ z'\in L\cap \mathbf{B}_{\de_p}(p).
\endaligned
\end{equation}
Here, $\psi_\Th$ is a function depending on $\Th_1,\cdots,\Th_n$ satisfying $\lim_{s\to0}\psi_\Th(s)=0$. Since $\Th_i(z')\le\Th_i(z)=\f12\pi$ on $L\cap \mathbf{B}_{\de_p}(p)$ for each $i=1,\cdots,m$, we get
\begin{equation}\aligned
0\ge\Th_i(z')-\Th_i(z)\ge\Th_*(z')-\Th_*(z)\ge-\psi_\Th(|z-z'|)|z-z'|.
\endaligned
\end{equation}
This gives
\begin{equation}\aligned
|\na\Th_i(z)|=0\qquad \mathrm{for\ each} \ i=1,\cdots,m.
\endaligned
\end{equation}
With \eqref{Deraiam}, we get
\begin{equation}\aligned\label{hiijx=0}
h_{iij}(z)=0\qquad \mathrm{for\ each} \ i\in\{1,\cdots,m\},\ j\in\{1,\cdots,n\}.
\endaligned
\end{equation}

We define a function
\begin{equation}\aligned
\z=\prod_{1\le i\le m}(\cos\Th_i)^{1/m},
\endaligned
\end{equation}
which is a Lipschitz function on $L\cap \mathbf{B}_{\de_p}(p)$. For $z_0\in\mathcal{C}\cap \mathbf{B}_{\de_p}(p)$, from Newton-Leibniz formula one has
$$\cos\Th_i(z)=\cos\Th_i(z)-\cos\f\pi2\le\f\pi2-\Th_i(z)=\Th_i(z_0)-\Th_i(z)\le\psi_\Th(|z-z_0|)|z-z_0|$$
for each $i\in\{1,\cdots,m\}$. So we have
\begin{equation}\aligned
\z(z)\le\prod_{1\le i\le m}(\psi_\Th(|z-z_0|)|z-z_0|)^{1/m}=\psi_\Th(|z-z_0|)|z-z_0|.
\endaligned
\end{equation}
This implies
\begin{equation}\aligned\label{naVz=0}
|\na\z|=0\qquad\qquad \mathrm{on} \ \mathcal{C}\cap \mathbf{B}_{\de_p}(p).
\endaligned
\end{equation}
If
\begin{equation}\aligned\label{TpL0n}
T_pL=\{(p',y)\in\R^n\times\R^n:\, y=\R^n\}\qquad\mathrm{with}\ p=(p',p''),
\endaligned
\end{equation}
then 
there exist a constant $0<r_*<\de_p$ and a smooth function $\mathbf{w}$ on $B_{r_*}(p'')\subset T_pL$ so that $G_{D\mathbf{w}}\cap \mathbf{B}_{r_*}(p)\subset L$, $D\mathbf{w}(p'')=p'$, $|D^2\mathbf{w}(p'')|=0$ and $|D^3\mathbf{w}|\le1$ on $B_{r_*}(p'')$. From \eqref{DeSiwnn}, 
\begin{equation}\aligned\label{bfwnn*}
\De_L\mathbf{w}_{nn}=-\sum_{i,j=1}^n\left(\p_{x_n}\hat{g}^{ij}\right)\mathbf{w}_{ijn}=2\sum_{i,j,k,k',l=1}^n\hat{g}^{ik}\mathbf{w}_{kk'}\mathbf{w}_{k'ln}\hat{g}^{lj}\mathbf{w}_{ijn},
\endaligned
\end{equation}
where $(\hat{g}^{ij})$ is the inverse matrix of $I_n+(D^2\mathbf{w})^2$. We also regard $\mathbf{w}_{nn}$ as a function on $L\cap \mathbf{B}_{r_*}(p)$.
From Lemma \ref{wnnge0} and Theorem \ref{Harnack}, there is a constant $c>0$ depending only on $n$ so that
\begin{equation}\aligned
\sup_{L\cap\mathbf{B}_r(p)} (\mathbf{w}_{nn}+\ep)\le c\inf_{L\cap\mathbf{B}_r(p)} (\mathbf{w}_{nn}+\ep)\qquad\qquad \mathrm{for\ each}\ 0<r<r_*/8,\ 0<\ep<1.
\endaligned
\end{equation}
Letting $\ep\to0$ implies $\mathbf{w}_{nn}\equiv0$ on $L\cap \mathbf{B}_{r_*/8}(p)$ as $|D^2\mathbf{w}(p'')|=0$. From Lemma \ref{wnnge0} again, $\z\equiv0$ on $L\cap \mathbf{B}_{r_*/8}(p)$.
With the equation of minimal submanifolds, we finish the proof in the case of \eqref{TpL0n}.

Now we assume
\begin{equation}\aligned\label{ClaimTpL}
T_pL\neq\{(p',y)\in\R^n\times\R^n:\, y=\R^n\}.
\endaligned
\end{equation}
In this case, $\z$ is not identically equal to zero on $L\cap \mathbf{B}_{\de_p}(p)$.
Noting $\z^m$ is smooth. Put $\Phi=\z^m\circ \exp_{p}$, where $\exp_{p}$ is the exponential mapping w.r.t. $p$ (with values in $L$). 
From Taylor's expansion for $\Phi(x)$ at the origin in $T_pL\cong\R^n$ with $x=(x_1\cdots,x_n)$, we write
\begin{equation}\aligned
\Phi(x)=\sum_{j=0}^{m'}\sum_{\be_1+\cdots+\be_n=j,\be_i\ge0}\f{\p_{x_1}^{\be_1}\cdots\p_{x_n}^{\be_n}\Phi(0^n)}{j!}x_1^{\be_1}\cdots x_n^{\be_n}+\mathcal{E}_\Phi(x),
\endaligned
\end{equation}
where
\begin{equation}\aligned\label{epPhix}
\sum_{i=0}^2|x|^i|D^i\mathcal{E}_\Phi(x)|\le c'|x|^{m'+1}\qquad \mathrm{on}\ \exp^{-1}_{p}(L\cap\mathbf{B}_{\de_p}(p)),
\endaligned
\end{equation}
and $c'$ is a constant depending only on $m',n$. 
Denote $\p^\be\Phi=\p_{x_1}^{\be_1}\cdots\p_{x_n}^{\be_n}\Phi$ and $x^\be=x_1^{\be_1}\cdots x_n^{\be_n}$ for $\be=(\be_1,\cdots,\be_n)$ with $\be_i\ge0$. Put $|\be|_1=\be_1+\cdots+\be_n$. 
From \eqref{ClaimTpL}, we may assume $\p^\be\Phi(0^n)=0$ for all $|\be|_1\le m'-1$, and $\p^{\be'}\Phi(0^n)\neq0$ for some $\be'$ with $|\be'|_1=m'$.
From \eqref{naVz=0}, we get $m'\ge m+1$. Let
\begin{equation}\aligned
P(x)=\sum_{|\be|_1=m'}\f{\p^\be\Phi(0^n)}{m'!}x^\be=\sum_{\be_1+\cdots+\be_n=m',\be_i\ge0}\f{\p_{x_1}^{\be_1}\cdots\p_{x_n}^{\be_n}\Phi(0^n)}{m'!}x_1^{\be_1}\cdots x_n^{\be_n}.
\endaligned
\end{equation}
From $\Phi\ge0$ and \eqref{epPhix}, for any $\ep>0$ there is a constant $\r_\ep>0$ so that $P(x)\ge-\ep|x|^{m'}$ for all $|x|<\r_\ep$.
Since $P$ is homogeneous, letting $\ep\to0$ implies $P\ge0$ on $\R^n$, and then $m'$ is even. 
Up to choosing the small $\de_p>0$, we get 
$$\De_L \z^{\a'}\le0\qquad \mathrm{on}\ M\cap \mathbf{B}_{\de_p}(p)$$ 
from Lemma \ref{DeMla_1a} with $\a'=\f{m}{m+1/2}$. Set $\a_m=\a'/m$. This implies
\begin{equation}\aligned
\z^m\De_L\z^m-\left(1-\a_m\right)|\na^L\z^m|^2\le0\qquad\mathrm{on}\ M\cap \mathbf{B}_{\de_p}(p).
\endaligned
\end{equation}
Thus, for any $\de>0$, there is a small constant $r_\de>0$ so that 
\begin{equation}\aligned\label{PhiDePde}
\Phi(x)\De_{\R^n}\Phi(x)-\left(1-\a_m\right)|D\Phi|^2(x)\le\de|x|^{2m'-2}\qquad\mathrm{on}\ B_{r_\de}.
\endaligned
\end{equation}
Up to choosing the constant $r_\de>0$, from \eqref{epPhix} and \eqref{PhiDePde} we have
\begin{equation}\aligned\label{PDePde}
P(x)\De_{\R^n}P(x)-\left(1-\a_m\right)|DP|^2(x)\le2\de|x|^{2m'-2}\qquad\mathrm{on}\ B_{r_\de}.
\endaligned
\end{equation}
Since $P,\De_{\R^n}P,|DP|^2$ are homogeneous, from \eqref{PDePde} it follows that
\begin{equation}\aligned
P(x)\De_{\R^n}P(x)-\left(1-\a_m\right)|DP|^2(x)\le0\qquad\mathrm{on}\ \R^n.
\endaligned
\end{equation}
This infers 
\begin{equation}\aligned\label{DePam*}
\De_{\R^n} P^{\a_m}\le0\qquad\mathrm{on}\ \R^n\setminus\mathcal{C}_{P}.
\endaligned
\end{equation}
Here, $\mathcal{C}_P$ denotes the nodal set of $P$, i.e., $\mathcal{C}_P=\{x\in \R^n:\, P(x)=0\}$. 

Let $B_r(\mathcal{C}_{P})$ denote the $r$-neighborhood of $\mathcal{C}_{P}$ defined by $\{x\in\R^n:\, d(x,\mathcal{C}_{P})<r\}$.
Let $\e_\ep$ be a Lipschitz function defined by $\e_\ep\equiv1$ on $\R^n\setminus B_\ep(\mathcal{C}_{P})$, $\e_\ep=\f2\ep d(x,\mathcal{C}_{P})-1$ on $B_\ep(\mathcal{C}_{P})\setminus B_{\ep/2}(\mathcal{C}_{P})$ and $\e_\ep\equiv0$ on $B_{\ep/2}(\mathcal{C}_{P})$.
Since $m'\a_m>1$ and $\mathcal{C}_P$ has Hausdorff dimension $\le n-1$, for any $R>0$ we get
\begin{equation}\aligned\label{MB3neep}
\lim_{\ep\to0}\int_{B_R} |DP^{\a_m}|\cdot|D \e_\ep| =0.
\endaligned
\end{equation}
Let $\varphi$ be a nonnegative Lipschitz function with compact support in $\R^n$. 
Integrating by parts infers
\begin{equation}\aligned
0\le&-\int_{\R^n}\varphi\e_\ep\De_{\R^n} P^{\a_m} =\int_{\R^n}\lan D(\varphi\e_\ep),D P^{\a_m}\ran \\
=&\int_{\R^n}\e_\ep\lan D\varphi,D P^{\a_m}\ran +\int_{\R^n}\varphi\lan D\e_\ep,D P^{\a_m}\ran .
\endaligned
\end{equation}
Combining \eqref{MB3neep}, letting $\ep\to0$ in the above inequality infers
\begin{equation}\aligned
0\le\int_{\R^n}\lan D\varphi,D P^{\a_m}\ran,
\endaligned
\end{equation}
which means that $P^{\a_m}$ is superharmonic on $\R^n$ in the distribution sense.
Hence, the mean value inequality holds for $P^{\a_m}$, which implies $P^{\a_m}\equiv0$ as $P^{\a_m}\ge0$ and $P^{\a_m}(0^n)=0$. 
It contradicts to the definition of $P$. We complete the proof.
\end{proof}
\begin{remark}
The reason of considering the polynominal $P$ instead of $\xi^m$ is that the nodal set of $P$ is simple, while the nodal set of a smooth function may be very complicated.
\end{remark}

\section{Hessian estimates via geometric measure theory}

Let $\tau_{*}$ is the constant in Lemma \ref{AllardregThm} with $N=2n$ there.
\begin{lemma}\label{Key}
Let $\{(u_\ell,\th_\ell)\}_{\ell\ge1}$ be a sequence in $\mathbb{F}_n(\La,\k,r_\ell)$ for some $\k,\La\ge0$ and $n\ge2$ so that $Du_\ell(0^n)=0^n$, $\lim_{\ell\to\infty}|D^2u_\ell(0^n)|=\infty$ and $\lim_{\ell\to\infty}r_\ell\to\infty$. If $\mathcal{H}^n(G_{Du_\ell}\cap \mathbf{B}_1)\le(1+\tau_{*})\omega_n$
and the current $[|G_{Du_\ell}|]$ converges as $\ell\to\infty$ to an integral current $T$ in $\R^{2n}$,
then $\mathrm{spt}T$ is flat and $T$ has multiplicity one on $\mathrm{spt}T$.
\end{lemma}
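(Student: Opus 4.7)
The plan is to combine calibrated geometry with Allard's regularity and the rigidity dichotomy Theorem \ref{Fredholm} to force $T$ to be a single multiplicity-one $n$-plane. Since $\mathbf{Lip}\,\th_\ell\le\La/r_\ell\to 0$ and $\th_\ell$ is bounded, passing to a subsequence $\th_\ell\to\Th_\infty$ uniformly on every fixed ball for some constant $\Th_\infty$, and $H_{L_\ell}=J\na^{L_\ell}\th_\ell\to 0$ uniformly. Since the tangent plane of $L_\ell$ at any point has phase $\th_\ell\to\Th_\infty$, the limit $T$ is calibrated by the constant-coefficient form $\mathrm{Re}(e^{-\sqrt{-1}\Th_\infty}dz)$, hence area-minimizing in $\mathbf{B}_1$. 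Because $\mathbf{0}=(0^n,0^n)\in L_\ell$ for each $\ell$, weak convergence gives $\mathbf{0}\in\mathrm{spt}\,T$; integrality yields the density $\Th(T,\mathbf{0})\ge 1$, and the monotonicity formula applied to $T$ gives $\Th(T,\mathbf{0})\le\mathbf{M}(T\llcorner\mathbf{B}_1)/\omega_n\le 1+\tau_*$, so choosing $\tau_*<1$ forces $\Th(T,\mathbf{0})=1$.

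\textbf{Graphical convergence.} Lemma \ref{AllardregThm} applies to each $L_\ell$ on $\mathbf{B}_1$ (mean curvature $\le\La/r_\ell\le\tau_*$ eventually, mass bound given), producing $n$-planes $P_\ell$ and uniform $C^{1,\a}$ estimates for $L_\ell$ as a graph over $P_\ell$ on $\mathbf{B}_{\varrho_*}$. Evans--Krylov together with Schauder theory for the Lagrangian mean curvature equation (uniformly elliptic in the $C^{1,\a}$-graphical region, with Lipschitz right-hand side of vanishing Lipschitz norm) upgrades these to uniform interior $C^{2,1/2}$ bounds; extracting $P_\ell\to P_\infty$ gives $L_\ell\cap K\to L\cap K$ in $C^{2,1/2}$ on every compact $K\subset\mathbf{B}_{\varrho_*}$, where $L:=\mathrm{spt}\,T\cap\mathbf{B}_{\varrho_*}$ is a smooth special Lagrangian submanifold of constant phase $\Th_\infty$.

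\textbf{Invoking the dichotomy.} Since $|D^2u_\ell(0^n)|\to\infty$, some eigenvalue of $D^2u_\ell(0^n)$ tends to $\pm\infty$, so the corresponding Jordan angle of $T_{\mathbf{0}}L_\ell$ with $\R^n\times\{0^n\}$ tends to $\pm\pi/2$. By Proposition \ref{ThiQxx'} and the $C^{2,1/2}$ convergence, at least one Jordan angle of $T_{\mathbf{0}}L$ equals $\pm\pi/2$, i.e., in the notation of Theorem \ref{Fredholm} the origin lies in $L\setminus M$. This excludes the alternative that $L$ is a graph over $\R^n\times\{0^n\}$, so after rescaling $\mathbf{B}_{\varrho_*}$ to $\mathbf{B}_1$, Theorem \ref{Fredholm} forces $L$ to be flat: $\mathrm{spt}\,T\cap\mathbf{B}_{\varrho_*}\subset P$ for some affine $n$-plane $P$ through $\mathbf{0}$, and $\Th(T,\mathbf{0})=1$ promotes this to $T\llcorner\mathbf{B}_{\varrho_*}=[|P\cap\mathbf{B}_{\varrho_*}|]$.

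\textbf{Globalization and main obstacle.} To extend to $\mathbf{B}_1$, consider $S:=T-[|P\cap\mathbf{B}_1|]$: an integral current vanishing on $\mathbf{B}_{\varrho_*}$ with $\mathbf{M}(S\llcorner\mathbf{B}_1)\le\tau_*\omega_n$. Any $\mathbf{p}\in\mathrm{spt}\,S\cap\mathbf{B}_{1-\de}$ would inject mass at least $\omega_n\de^n$ into $\mathbf{M}(S\llcorner\mathbf{B}_1)$ via the monotonicity formula for the minimizing pieces of $S$; choosing $\tau_*<\de^n$ and then letting $\de\to 0$ excludes all such points, so $\mathrm{spt}\,T\subset P$ with multiplicity one on $\mathbf{B}_1$. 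The principal obstacle is the $C^{2,1/2}$ upgrade of Allard's $C^{1,\a}$ convergence, which is what Theorem \ref{Fredholm} requires to deliver the flat-versus-graph dichotomy; this rests on Evans--Krylov and Schauder theory for the Lagrangian mean curvature equation with merely Lipschitz right-hand side, and must be set up so as not to invoke circularly the very interior Hessian estimate that this paper is proving.
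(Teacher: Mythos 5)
Your proof follows the paper's outline for the local part (calibration-implies-minimizing, Allard's regularity to get graphical convergence, Theorem \ref{Fredholm} plus the eigenvalue blow-up to force flatness on $\mathbf{B}_{\varrho_*}$), and the worry you raise about circularity in the $C^{2,1/2}$ upgrade is actually unfounded: Allard's theorem already yields uniform $C^{1,\a}$ bounds on the graphing map $Dw_\ell$, hence $C^{2,\a}$ bounds on the potential $w_\ell$, and the linearized equation \eqref{gijwijk} has $C^\a$ coefficients and bounded right-hand side, so standard $L^p$/Schauder theory gives uniform $W^{3,p}\subset C^{2,\a'}$ bounds without appealing to Theorem \ref{main}. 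Where your argument genuinely breaks down is the globalization step. First, the estimate $\mathbf{M}(S\llcorner\mathbf{B}_1)\le\tau_*\omega_n$ for $S=T-[|P\cap\mathbf{B}_1|]$ does not follow from $\mathbf{M}(T\llcorner\overline{\mathbf{B}}_1)\le(1+\tau_*)\omega_n$ and $\mathbf{M}([|P\cap\mathbf{B}_1|])=\omega_n$; mass is not additive under subtraction of currents, and a priori the two could be supported on nearly disjoint sets, in which case $\mathbf{M}(S\llcorner\mathbf{B}_1)$ would be close to $2\omega_n$. Second, even granting a small-mass bound, $S$ is the difference of two minimizing currents and is not itself stationary, so the monotonicity formula is not available for points of $\mathrm{spt}\,S$; the phrase ``minimizing pieces of $S$'' has no meaning here. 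Third, the lemma's conclusion is about $\mathrm{spt}\,T$ in all of $\R^{2n}$ (the convergence hypothesis is global, with $r_\ell\to\infty$), not merely on $\mathbf{B}_1$, so an argument confined to $\mathbf{B}_1$ would in any case be incomplete.

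The paper's globalization goes by a different and correct route: since $T$ is mass-minimizing, the Almgren/De Lellis--Spadaro regularity theory bounds the Hausdorff dimension of the interior singular set by $n-2$, so the regular part $\mathcal{R}(T)$ is relatively open and connected; applying the Constancy Theorem to the push-forward under projection onto the tangent plane at a regular point shows the multiplicity is constant, hence $1$ by the mass bound \eqref{MassTB1}; and the flatness already established on $\mathbf{B}_{\varrho_*/2}$ then propagates to all of $\mathcal{R}(T)$ by unique continuation for (analytic) special Lagrangian submanifolds. Replacing your $S=T-[|P|]$ comparison with this regularity/constancy argument would close the gap.
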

\begin{proof}
Denote $M_\ell=G_{Du_\ell}$ for each $\ell$.
Let $[|M_\ell|]$ denote the current associated with $M_\ell$ with the orientation $\vec{\tau}_\ell=e_1^\ell\wedge\cdots\wedge e_n^\ell$, where $\{e_1^\ell,\cdots,e_n^\ell\}$ is an orthonormal tangent basis on $M_\ell$. 
Let $\omega_\ell$ be the $n$-form defined on $B_{r_\ell}\times\R^n$ by
$$\omega_\ell=\mathrm{Re}(e^{-\sqrt{-1}\th_\ell}dz_1\wedge\cdots\wedge dz_n).$$
Since $G_{Du_\ell}$ is Lagrangian, from \eqref{PhaseThDEF} (up to a choice of the orientation $\vec{\tau}_\ell$) we have
\begin{equation}\aligned
\,[|M_\ell|](\omega_\ell)=\int_{M_\ell}\lan\omega_\ell,\vec{\tau}_\ell\ran=\mathcal{H}^n(M_\ell).
\endaligned
\end{equation}
Let $\mathbf{W}$ be an integer multiplicity one $(n+1)$-current in $\R^{2n}$ with compact support, and 
$W_\ell=[|M_\ell|]+\p \mathbf{W}$ for each $\ell$.
Then we have
\begin{equation}\aligned
\mathbf{W}(d\omega_\ell)=\p\mathbf{W}(\omega_\ell)=W_\ell(\omega_\ell)-[|M_\ell|](\omega_\ell)=W_\ell(\omega_\ell)-\mathcal{H}^n(M_\ell),
\endaligned
\end{equation}
which implies (with \eqref{PhaseThDEF})
\begin{equation}\aligned
&\mathcal{H}^n(M_\ell))\le\mathbf{M}(W_\ell)-\mathbf{W}(d\omega_\ell)\\
\le&\mathbf{M}(W_\ell)+\sup_{B_{r_\ell}}|D\th_\ell|\mathbf{M}(\mathbf{W})\le\mathbf{M}(W_\ell)+\f{\La}{r_\ell}\mathbf{M}(\mathbf{W}).
\endaligned
\end{equation}
From \eqref{almostmonoto} and \eqref{VolGDur}, $\r^{-n}\mathbf{M}(T\llcorner\mathbf{B}_\r)$ is uniformly bounded on $\r$.
Since $[|M_\ell|]$ converges as $\ell\to\infty$ to $T$, it's clear that $W_\ell$ converges as $\ell\to\infty$ to $T+\p \mathbf{W}$. Letting $\ell\to\infty$ in the above inequality gives
\begin{equation}\aligned
\mathbf{M}(T)\le\mathbf{M}(T+\p \mathbf{W}),
\endaligned
\end{equation}
which means that $T$ is minimizing in $\R^{2n}$ with $\p T=0$. In particular, if we write $T=\vth[|S|]$ with the multiplicity $\vartheta$ and $S=\mathrm{spt}T$, then the varifold $\vth|S|$ is stationary clearly (from the first variational formula of varifolds).
The condition $\mathcal{H}^n(G_{Du_\ell}\cap \mathbf{B}_1)\le(1+\tau_{*})\omega_n$ implies
\begin{equation}\aligned\label{MassTB1}
\mathbf{M}(T\llcorner\overline{\mathbf{B}}_1)\le(1+\tau_{*})\omega_n.
\endaligned
\end{equation}
By Allard's regularity theorem (see Lemma \ref{AllardregThm}), $S\cap\mathbf{B}_{\varrho_*}$ is smooth, and
\begin{equation}\aligned
\sup_{\mathbf{x},\mathbf{y}\in M_\ell\cap\mathbf{B}_{\varrho_*}(\mathbf{x}),\mathbf{x}\neq\mathbf{y}}|\mathbf{x}-\mathbf{y}|^{-1/2}d_{\mathbf{G}}(T_{\mathbf{x}}M_\ell,T_{\mathbf{y}}M_\ell)\le c_{n}.
\endaligned
\end{equation}
Here, $c_{n}$ is a positive constant depending only on $n$, and $d_{\mathbf{G}}$ is the distance function in $\mathbf{G}_{n,n}$ defined in $\S$4.
Then $M_\ell\cap\mathbf{B}_{\varrho_*/2}$ converges to $S\cap\mathbf{B}_{\varrho_*/2}$ smoothly since $T\llcorner\mathbf{B}_{\varrho_*/2}$ has multiplicity one from \eqref{MassTB1}.
From Theorem \ref{Fredholm} and $\lim_{\ell\to\infty}|D^2u_\ell(0^n)|=\infty$, we deduce that $S\cap\mathbf{B}_{\varrho_*/2}$ is flat.

Since the minimizing current $T$ has singularities of dimension $\le n-2$ from \cite{Am} or Theorem 1.1 in \cite{ds1},
the regular part $\mathcal{R}(T)$ of $T$ is (relatively) open and connected. 
For a considered regular point $\mathbf{y}$ of $T$,
let $P_{\mathbf{y}}$ denote the tangent space of $S$ at $\mathbf{y}$, then the projection $\Pi:\,S\cap\mathbf{B}_\de(\mathbf{y})\to \Pi(S\cap\mathbf{B}_\de(\mathbf{y}))\subset P_{\mathbf{y}}$ is bijective for the small $\de>0$.
By \eqref{PushforwfT} and Constancy Theorem (see 26.27 in \cite{S} or 7.1.9 in \cite{LYa}), we conclude that  the multiplicity function of $T$ is a constant,
and equal to one on $S$ from \eqref{MassTB1}. Then the flatness of $S\cap\mathbf{B}_{\varrho_*/2}$ implies that $S$ is flat everywhere.
\end{proof}

Now we use Lemma \ref{AllardregThm} and Lemma \ref{Key} to show the following H\"older estimates on tangent planes of Lagrangian graphs.
\begin{theorem}\label{CaTanPlane}
Let $(u,\th)\in\mathbb{F}_n(\La,\k,2r)$ for some $\La,\k\ge0$, $n\ge2$ and $r>0$. Put $L=G_{Du}$.
For any $\a\in(0,1)$ and $x\in B_{r/2}$, there is a constant $c_{n,\a,\La,\k}>0$ depending only on $n,\a,\La$ and $\k$ so that 
\begin{equation}\aligned
r^\a\sup_{\mathbf{x},\mathbf{y}\in L\cap\mathbf{B}_{r}(\mathbf{x}),\mathbf{x}\neq\mathbf{y}}|\mathbf{x}-\mathbf{y}|^{-\a}d_{\mathbf{G}}(T_{\mathbf{x}}L,T_{\mathbf{y}}L)\le c_{n,\a,\La,\k}.
\endaligned
\end{equation}
\end{theorem}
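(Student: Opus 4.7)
The plan is to argue by contradiction via a blow-up analysis: I will identify the blow-up limit as a flat plane using Lemma~\ref{Key}, then transfer this flatness to the pre-limit Lagrangian graphs via Allard's regularity theorem (Lemma~\ref{AllardregThm}).

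By the scale-invariance of the statement, I reduce to $r=1$. Suppose for contradiction that sequences $(u_\ell,\th_\ell)\in\mathbb{F}_n(\La,\k,2)$, $x_\ell\in B_{1/2}$, and $\mathbf{y}_\ell\in L_\ell\cap\mathbf{B}_1(\mathbf{x}_\ell)\setminus\{\mathbf{x}_\ell\}$ exist with $\mathbf{x}_\ell=(x_\ell,Du_\ell(x_\ell))$ and
\begin{equation*}
Q_\ell:=|\mathbf{x}_\ell-\mathbf{y}_\ell|^{-\a}d_{\mathbf{G}}(T_{\mathbf{x}_\ell}L_\ell,T_{\mathbf{y}_\ell}L_\ell)\to\infty.
\end{equation*}
Since $d_{\mathbf{G}}$ is uniformly bounded on $\mathbf{G}_{n,n}$, $\rho_\ell:=|\mathbf{x}_\ell-\mathbf{y}_\ell|\to 0$. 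Moreover, if $|D^2u_\ell(x_\ell)|$ stayed bounded on a subsequence, $D^2u_\ell$ would be locally uniformly bounded near $x_\ell$, yielding a Lipschitz bound on tangent planes that forces $Q_\ell\le c\rho_\ell^{1-\a}\to 0$; hence $|D^2u_\ell(x_\ell)|\to\infty$.

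Next, I let $r_\ell^*>0$ be the largest radius for which $\mathcal{H}^n(L_\ell\cap\mathbf{B}_{r}(\mathbf{x}_\ell))\le(1+\tau_{*})\omega_n r^n$; it is strictly positive by the smoothness of $L_\ell$ and bounded above via \eqref{VolGDur}. If $r_\ell^*\ge\eta>0$ along a subsequence, Lemma~\ref{AllardregThm} applied at fixed scale $\eta$ directly yields a uniform $C^{1,\b}$ estimate (any $\b\in(\a,1)$), contradicting $Q_\ell\to\infty$; so $r_\ell^*\to 0$. Rescale and normalize by a linear function,
\begin{equation*}
\tilde u_\ell(y):=(r_\ell^*)^{-2}\bigl(u_\ell(x_\ell+r_\ell^*y)-u_\ell(x_\ell)-r_\ell^*y\cdot Du_\ell(x_\ell)\bigr),
\end{equation*}
so that $(\tilde u_\ell,\tilde\th_\ell)\in\mathbb{F}_n(\La,2\k,R_\ell)$ with $R_\ell:=1/r_\ell^*\to\infty$, $D\tilde u_\ell(0^n)=0$, $|D^2\tilde u_\ell(0^n)|\to\infty$, $\mathbf{Lip}\,\tilde\th_\ell\to 0$, and $\mathcal{H}^n(G_{D\tilde u_\ell}\cap\mathbf{B}_1)=(1+\tau_{*})\omega_n$. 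By \eqref{VolGDur} and Federer--Fleming compactness, a subsequence of $[|G_{D\tilde u_\ell}|]$ converges to an integral current $T$ in $\mathbb{R}^{2n}$; Lemma~\ref{Key} then identifies $T=[|P|]$ for some $n$-plane $P$ with multiplicity one.

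Since $T$ is a flat plane, for each fixed $R>0$ and $\ell$ large the mass density of $G_{D\tilde u_\ell}$ on $\mathbf{B}_R$ is $\le(1+\tau_{*})\omega_n R^n$ and the mean curvature is $\le\mathbf{Lip}\,\tilde\th_\ell=O(1/R_\ell)\le\tau_{*}/R$; Lemma~\ref{AllardregThm} at scale $R$ then produces a uniform $C^{1,\b}$ graph representation of $G_{D\tilde u_\ell}$ over $P$ on $\mathbf{B}_{\varrho_{*}R}$ for any $\b\in(0,1)$. Picking $\b\in(\a,1)$, the induced H\"older-$\b$ estimate on tangent planes of $\tilde L_\ell$, applied to the pair $(\mathbf{0},(\mathbf{y}_\ell-\mathbf{x}_\ell)/r_\ell^*)$ and scaled back to the original coordinates, yields a uniform bound on $Q_\ell$, contradicting $Q_\ell\to\infty$. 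The main obstacle --- and the place where care is needed --- is arranging the scale $r_\ell^*$ and the pair $(\mathbf{x}_\ell,\mathbf{y}_\ell)$ so that the rescaled separation $\rho_\ell/r_\ell^*$ falls within the region covered by Allard; this is done by a De Giorgi-type point selection at the outset, ensuring the H\"older failure is concentrated at the mass-saturation scale $r_\ell^*$, so that the flat-limit $C^{1,\b}$ bound prevails at the offending pair.
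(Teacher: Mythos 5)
Your plan shares the key ideas with the paper's argument (blow-up at the mass-saturation scale, identifying the limit via Lemma~\ref{Key}), but the closing step --- transferring the flatness of the blow-up limit to the pre-limit graphs via Allard --- does not deliver the contradiction you need. The bound you obtain from Allard at scale $R$ on $G_{D\tilde u_\ell}$ is $d_{\mathbf{G}}(T_{\mathbf{x}},T_{\mathbf{y}})\le c\,(Rr_\ell^*)^{-\b}|\mathbf{x}-\mathbf{y}|^\b$ in the original coordinates for $|\mathbf{x}-\mathbf{y}|\le\varrho_*Rr_\ell^*$, hence $Q_\ell\le c\,(Rr_\ell^*)^{-\b}\rho_\ell^{\b-\a}$. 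Even in the most favorable case $\rho_\ell\sim Rr_\ell^*$ this only gives $Q_\ell\lesssim(Rr_\ell^*)^{-\a}$, which blows up for fixed $R$ since $r_\ell^*\to0$; and taking $R=R_\ell$ with $R_\ell r_\ell^*$ bounded below requires a density bound at original scale $O(1)$, which is precisely what fails (else $r_\ell^*\not\to 0$). So the inequality you derive from Allard is consistent with $Q_\ell\to\infty$, not in contradiction with it. The parenthetical appeal to a ``De Giorgi-type point selection'' does not fix this: the obstruction is the scaling of the H\"older seminorm under dilation, not the location of the offending pair. (A smaller point: the inference ``$|D^2u_\ell(x_\ell)|$ bounded $\Rightarrow$ $D^2u_\ell$ locally uniformly bounded near $x_\ell$'' is not automatic and, as stated, would essentially presuppose the conclusion; the paper instead argues that if $D^2u_\ell$ were uniformly bounded on a ball of fixed radius, Evans--Krylov would force the density bound to hold there, contradicting the assumed failure.)

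The fix is to first reduce the H\"older estimate to a uniform density bound at a \emph{fixed} scale $\de_{n,\La,\k}>0$ --- this is exactly what makes Allard give a constant that does not degenerate --- and then prove that density bound by contradiction; this is the paper's structure. Notably, your setup already contains the right contradiction: after rescaling by $r_\ell^*$ you have $\mathcal{H}^n(G_{D\tilde u_\ell}\cap\mathbf{B}_1)\approx(1+\tau_*)\omega_n$, while Lemma~\ref{Key} forces the limit to be a multiplicity-one plane through the origin with $\mathbf{M}(T\llcorner\mathbf{B}_1)=\omega_n$. The convergence of mass (which you already invoke when asserting the density bound on $\mathbf{B}_R$ for large $\ell$) then gives $(1+\tau_*)\omega_n=\omega_n$, an immediate contradiction since $\tau_*>0$. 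This mass-discrepancy argument is exactly what the paper uses (with an intermediate threshold $(1+\tau_*/2)\omega_n$ chosen for technical robustness when producing the normalizing radius $\vep_\ell$), and it avoids the scaling trap entirely; the detour through Allard on the rescaled picture is unnecessary and, as written, does not close the proof.
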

\begin{proof}
Without loss of generality, we can assume $r=1$ by scaling. According to Allard's regularity theorem, we only need to prove that there is a constant $\de_{n,\La,\k}>0$ so that
\begin{equation}\aligned\label{Volsmall}
\mathcal{H}^n(G_{Du}\cap \mathbf{B}_{\de_{n,\La,\k}}(\mathbf{x}))\le (1+\tau_{*})\omega_n\de_{n,\La,\k}^n\qquad\mathrm{for\ any}\ \mathbf{x}\in G_{Du}\cap (B_1\times\R^n).
\endaligned
\end{equation}
Let us prove \eqref{Volsmall} by contradiction.
We suppose that there are a sequence of positive numbers $\de_\ell\to0$, a sequence of points $\mathbf{x}_\ell\in B_1\times\R^n$ and a sequence of $\{(u_\ell,\th_\ell)\}_{\ell\ge1}$ in $\mathbb{F}_n(\La,\k,2)$ with $\mathbf{x}_\ell=(x_\ell,Du_\ell(x_\ell))\in G_{D u_\ell}$
such that 
\begin{equation}\aligned\label{Contraxellu}
\mathcal{H}^n(G_{D u_\ell}\cap \mathbf{B}_{\de_\ell}(\mathbf{x}_\ell))>(1+\tau_{*})\omega_n\de_\ell^n.
\endaligned
\end{equation}
Then there is a sequence of points $\mathbf{p}_\ell=(p_\ell,Du_\ell(p_\ell))$ for some $p_\ell\in B_1$ so that $|\mathbf{x}_\ell-\mathbf{p}_\ell|\to0$ and
\begin{equation}\aligned
\lim_{\ell\to\infty}|D^2u_\ell|(p_\ell)=\infty.
\endaligned
\end{equation}
Or else, we have uniform $C^{2,1/2}$-estimates for $u_\ell$ on $B_{s_0}(x_\ell)$ for some $s_0>0$ independent of $\ell$, which violates \eqref{Contraxellu} and $\de_\ell\to0$.

We claim that there is a sequence of positive numbers $\vep_\ell\to0$ so that
\begin{equation}\aligned\label{volGDui}
\mathcal{H}^n(G_{D u_\ell}\cap \mathbf{B}_{\vep_\ell}(\mathbf{p}_\ell))=(1+\tau_{*}/2)\omega_n\vep_\ell^n\qquad\mathrm{for\ each}\ \ell.
\endaligned
\end{equation}
If the claim \eqref{volGDui} fails, then we can assume $\vep_\ell\ge2\vep_0$ for some constant $\vep_0>0$ up to a choice of  a subsequence, and
\begin{equation}\aligned\label{GDBrpelltau*2}
\mathcal{H}^n(G_{D u_\ell}\cap \mathbf{B}_{r}(\mathbf{p}_\ell))<(1+\tau_{*}/2)\omega_nr^n\qquad\mathrm{for\ each}\ \ell,\ \mathrm{and}\ r\in(0,\vep_\ell].
\endaligned
\end{equation}
Up to a choice of the smaller $\vep_0$,
we can further assume
\begin{equation}\aligned\label{tauvep0}
e^{\La\vep_0}\le\left(1+\f{\tau_*}2\right)^{-1}\left(1+\f{2\tau_*}3\right).
\endaligned
\end{equation}
Set $s_\ell=|\mathbf{x}_\ell-\mathbf{p}_\ell|$.
Combining \eqref{almostmonoto}, \eqref{Contraxellu}, \eqref{GDBrpelltau*2} and \eqref{tauvep0}, for every large $\ell$
\begin{equation}\aligned
e^{\La\de_\ell}(1+\tau_*)<&\f{e^{\La\de_\ell}}{\omega_n\de_\ell^n}\mathcal{H}^n(G_{D u_\ell}\cap \mathbf{B}_{\de_\ell}(\mathbf{x}_\ell))\le\f{e^{\La\vep_0}}{\omega_n\vep_0^n}\mathcal{H}^n(G_{D u_\ell}\cap \mathbf{B}_{\vep_0}(\mathbf{x}_\ell))\\
\le&\f{e^{\La\vep_0}}{\omega_n\vep_0^n}\mathcal{H}^n(G_{D u_\ell}\cap \mathbf{B}_{\vep_0+s_\ell}(\mathbf{p}_\ell))\le\f{e^{\La\vep_0}}{\omega_n\vep_0^n}(1+\tau_{*}/2)\omega_n(\vep_0+s_\ell)^n\\
=&\left(1+\f{\tau_*}2\right)e^{\La\vep_0}\left(1+\f{s_\ell}{\vep_0}\right)^n\le\left(1+\f{2\tau_*}3\right)\left(1+\f{s_\ell}{\vep_0}\right)^n.
\endaligned
\end{equation}
This above inequality is impossible for large $\ell$. Hence, we have proven the claim \eqref{volGDui}.

From \eqref{VolGDur}, there is a constant $c_n>0$ such that 
\begin{equation}\aligned
\mathcal{H}^n\left(G_{Du_\ell}\cap\mathbf{B}_{1/2}(\mathbf{p}_\ell)\right)\le c_n\left(1+\La (1+\k)\right)(1+\k)^n.
\endaligned
\end{equation}
By \eqref{almostmonoto}, there is a constant $c_{n,\La,\k}$ depending only on $n,\La,\k$ so that
\begin{equation}\aligned
\mathcal{H}^n\left(G_{Du_\ell}\cap\mathbf{B}_{r}(\mathbf{p}_\ell)\right)\le c_{n,\La,\k}r^n \qquad\mathrm{for\ any}\ r\in(0,1/2].
\endaligned
\end{equation}
Let $\tilde{u}_\ell(x)=\vep_\ell^{-2}u_\ell(\vep_\ell x+p_\ell)-\vep_\ell^{-1} x\cdot Du_\ell(p_\ell)$, and $\tilde{\th}_\ell(x)=\th_\ell(\vep_\ell x+p_\ell)$.
Then $\tilde{u}_\ell$ satisfies $|D\tilde{u}_\ell(0^n)|=0$ and
$$\mathrm{tr}(\arctan D^2\tilde{u}_\ell)=\tilde{\th}_\ell\qquad \mathrm{on} \ B_{(2\vep_\ell)^{-1}}.$$ 
Hence, $|D\tilde{u}_\ell|\le2\vep_\ell^{-1}\sup_{B_2}|Du_\ell|\le4\k\vep_\ell^{-1}$, and
$(\tilde{u}_\ell,\tilde{\th}_\ell)\in\mathbb{F}_n(\La/4,8\k,(2\vep_\ell)^{-1})$.
Denote $L_\ell=G_{D\tilde{u}_\ell}\cap(B_{(2\vep_\ell)^{-1}}\times \R^n)$. From \eqref{volGDui}, it follows that
\begin{equation}\aligned\label{volGDtui}
\mathcal{H}^n(L_\ell\cap \mathbf{B}_1)=(1+\tau_{*}/2)\omega_n.
\endaligned
\end{equation}
From Federer-Fleming's compactness theorem, there is an integral current $T_\infty$ in $\R^{2n}$ so that 
$[|L_\ell|]\llcorner U$ converges as $\ell\to\infty$ to $T_\infty\llcorner U$ for any bounded open set $U\subset\R^{2n}$.
From Lemma \ref{Key}, spt$T_\infty$ is flat and $T_\infty$ has multiplicity one on $\mathrm{spt}T_\infty$. As a consequence, $\mathcal{H}^n(L_\ell  \cap \mathbf{B}_1)$ converges as $\ell\to\infty$ to $\mathbf{M}(T_\infty\llcorner\mathbf{B}_1)=\omega_n$. This contradicts to \eqref{volGDtui}. Hence, \eqref{Volsmall} is true. We complete the proof.
\end{proof}

\begin{lemma}\label{blalul}
Let $\{(u_\ell,\th_\ell)\}_{\ell\ge1}$ be an infinite sequence in $\mathbb{F}_n(\La,\k)$ for some $\La,\k\ge0$ and $n\ge2$.
Let $\overline{\la}_{\ell},\underline{\la}_{\ell}$ denote the maximal and minimal eigenvalues of $D^2u_\ell$. If $\lim_{\ell\to\infty}\overline{\la}_{\ell}(0^n)=\infty$, then $\lim_{\ell\to\infty}\underline{\la}_{\ell}(x_\ell)=-\infty$ for some sequence $x_\ell\to0^n$.
\end{lemma}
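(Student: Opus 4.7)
I would argue by contradiction. Suppose the claim fails: there exist $\de_0>0$, $M>0$, and a subsequence (still indexed by $\ell$) with $\underline\la_\ell\ge -M$ on $B_{\de_0}$. After the rescaling $x\mapsto\de_0 x$ it suffices to handle $\de_0=1$, and the task becomes producing a uniform upper bound on $\overline\la_\ell(0^n)$. The first key step would be to invoke Theorem~\ref{CaTanPlane} to obtain a universal H\"older bound on the tangent-plane map $\mathscr A(\mathbf x)=\arctan D^2u_\ell(x)$ on $L_\ell\cap\mathbf B_\r$ for some $\r=\r(n,\La,\k)>0$. Proposition~\ref{ThiQxx'} then yields equicontinuity of both the ordered angles $\arctan\la_j^\ell$ and their eigenspaces on $B_\r$. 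Passing to a further subsequence, let $k$ be the largest index with $\la_k^\ell(0^n)\to\infty$; shrinking $\r$, one has $\la_j^\ell\to\infty$ uniformly on $B_\r$ for $j\le k$ while $\la_j^\ell$ stays in a fixed compact interval on $B_\r$ for $j>k$ (combining the equicontinuity with $\la_n^\ell\ge -M$).

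Next I split into cases by $k$. When $k\ge n-1$, on $B_\r$ one has $\la_{n-1}^\ell\to\infty$ uniformly and $\la_n^\ell\ge -M$, so Corollary~\ref{Hessvspe} with $\tau=1/\max(1,M)$ yields a uniform bound on $|D^2u_\ell|(0^n)$, contradicting $\overline\la_\ell(0^n)\to\infty$. When $1\le k\le n-2$, my plan is to apply Theorem~\ref{Hesslan-2} after a preliminary Lewy--Yuan rotation~\eqref{barxbaryx}. Provided that $\th_\ell(0^n)$ stays bounded away from $(n-2)\pi/2$ by some fixed $\eta>0$, I would pick a rotation parameter $\de\in(0,\eta)$ small enough that the Hessian bound $c=c(n,\k,\La)$ produced by Theorem~\ref{Hesslan-2} satisfies $c<\cot(\de/n)$ (possible since $\cot(\de/n)\to\infty$ as $\de\to 0$). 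The rotated pair $(\bar u_\ell,\bar\th_\ell)$ then lies in $\mathbb F_n$ with supercritical phase on a sub-ball; a pigeonhole on the ratios $\bar\la_i^\ell/\max(\bar\la_{i+1}^\ell,1)$ (whose product equals $\bar\la_1^\ell/\max(\bar\la_{k+1}^\ell,1)$ and is unbounded by the equicontinuity above) furnishes an index $m\le k$ with $\bar\la_m^\ell\ge 2\max(\bar\la_{m+1}^\ell,1)$, so Theorem~\ref{Hesslan-2} applies and delivers a uniform bound on $|D^2\bar u_\ell|$ at the rotated origin. The condition $c<\cot(\de/n)$ then guarantees, via the inverse of the Jacobian identity~\eqref{DEFJ}, that this bound transports back to a uniform bound on $|D^2u_\ell|(0^n)$, the desired contradiction.

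The hard part will be the remaining subcase $\th_\ell(0^n)\to (n-2)\pi/2$, where no admissible Lewy--Yuan rotation exists. Here the supercriticality $\th_\ell\ge(n-2)\pi/2$ together with $\sum_{j\le k}\arctan\la_j^\ell\to k\pi/2$ forces $k=n-2$ and $\la_{n-1}^\ell,\la_n^\ell\to 0$ at the origin. I plan to resolve this borderline case by a geometric blow-up: rescale $u_\ell$ at a rate matching the divergence of $\la_1^\ell(0^n)$, extract an integral-current limit of $[|G_{Du_\ell}|]$ via Federer--Fleming compactness, and apply Lemma~\ref{Key} together with the rigidity Theorem~\ref{Fredholm} to conclude that the limit must be a flat Lagrangian plane of multiplicity one. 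Tracking the tangent-plane data via Theorem~\ref{CaTanPlane} through the blow-up, the simultaneous behaviors $\la_1^\ell(0^n)\to\infty$ and $\la_{n-1}^\ell,\la_n^\ell\to 0$ force a tangent configuration at the origin that is incompatible with such a single flat-plane limit, yielding the required contradiction and completing the proof.
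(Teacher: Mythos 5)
Your overall strategy matches the paper's: argue by contradiction, invoke Theorem~\ref{CaTanPlane} for equicontinuity of the tangent-plane map, reduce to an index $k$ (largest with $\la_k^\ell(0^n)\to\infty$), and close the contradiction via Corollary~\ref{Hessvspe} when $k\ge n-1$. That branch of your argument is essentially the paper's case (ii) and is fine. But for $1\le k\le n-2$ you depart from the paper substantially, and two of the steps do not work as described.

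First, the paper does not perform a Lewy--Yuan rotation in the proof of this lemma: it simply asserts that, by Theorem~\ref{CaTanPlane} together with the preliminary claim that $\inf_{B_\de}\overline\la_\ell\ge1/\ep$, for each large $\ell$ there is some $m\le n-2$ with $\la_{m,\ell}>2\la_{m+1,\ell}\ge1$ on $B_\de$ (else all of $\la_{1,\ell},\dots,\la_{n-1,\ell}$ are large and case (ii) applies), and then feeds this directly into Theorem~\ref{Hesslan-2}. Your rotation by $\de/n$ is at best unnecessary, and at worst counterproductive: the rotation lowers every eigenvalue ($\arctan\bar\la_i=\arctan\la_i-\de/n$), so it makes the crucial lower bound on $\la_{m+1}$ harder, not easier, to verify. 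The condition $c<\cot(\de/n)$ you impose is the right thing if one did rotate, but it is solving a problem you created rather than one that is there.

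Second, the pigeonhole as stated does not yield what Theorem~\ref{Hesslan-2} needs. Your telescoping produces an index $m\le k$ with $\bar\la_m^\ell\ge 2\max(\bar\la_{m+1}^\ell,1)$, which gives $\bar\la_m^\ell\ge 2$ and $\bar\la_m^\ell\ge 2\bar\la_{m+1}^\ell$, but \emph{not} $\bar\la_{m+1}^\ell\ge 1$. The hypothesis of Theorem~\ref{Hesslan-2} is $\la_m/2>\la_{m+1}\ge 1$. If the pigeonhole lands on $m=k$ and $\bar\la_{k+1}^\ell<1$ (which is exactly when the factor $\bar\la_k^\ell/\max(\bar\la_{k+1}^\ell,1)$ dominates the product), the theorem does not apply as stated, and you have not proved the strengthened version $\la_m\ge 2\max(\la_{m+1},1)$ that your argument actually requires. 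This is a genuine gap.

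Third, your final "hard subcase" $\th_\ell(0^n)\to(n-2)\pi/2$ with the rescaling / Federer--Fleming / Lemma~\ref{Key} / Theorem~\ref{Fredholm} machinery is entirely absent from the paper's proof of this lemma; the paper uses Lemma~\ref{Key} and Theorem~\ref{Fredholm} only in the proof of Theorem~\ref{CaTanPlane} itself, which you already invoked. As sketched, the claim that "the simultaneous behaviors force a tangent configuration incompatible with a single flat-plane limit" is not a proof: a flat multiplicity-one limit is precisely what Lemma~\ref{Key} delivers, and you have not extracted a contradiction from it. More to the point, this case split (by the size of $\th_\ell(0^n)$) does not align with the actual obstruction, which is the size of $\la_{k+1,\ell}$ relative to $1$, not of $\th_\ell$ relative to $(n-2)\pi/2$.

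In short: the contradiction setup and the $k\ge n-1$ case are correct and match the paper; the $k\le n-2$ branch takes a genuinely different route that introduces an unnecessary rotation, leaves a gap in matching Theorem~\ref{Hesslan-2}'s hypothesis $\la_{m+1}\ge1$, and closes with an unproven blow-up claim, whereas the paper closes this branch directly with its dichotomy and Theorem~\ref{Hesslan-2} alone.
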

\begin{proof}
Let us prove it by contradiction.
We suppose that there is a sequence $\{(u_\ell,\th_\ell)\}_{\ell\ge1}$ in $\mathbb{F}_n(\La,\k)$ with $\lim_{\ell\to\infty}\overline{\la}_{\ell}(0^n)=\infty$ and $\lim_{\ell\to\infty}\inf_{B_{\de_0}}\underline{\la}_{\ell}(0^n)\ge-1/\tau$ for some $\tau\in(0,1]$ and a small $\de_0>0$. 
We claim that
\begin{center}
for any $\ep>0$ there exists a constant $\de>0$ so that
$\liminf_{\ell\to\infty}\inf_{B_\de}\overline{\la}_{\ell}\ge1/\ep.$
\end{center}
Or else, there are a constant $\ep_0\in(0,1)$ and a sequence of points $x_\ell\to0$ so that 
$$\liminf_{\ell\to\infty}\overline{\la}_{\ell}(x_\ell)\le1/\ep_0.$$
Set $\mathbf{x}_\ell=(x_\ell,Du_\ell(x_\ell))$ and $\Si_\ell=G_{Du_\ell}$. 
From Theorem \ref{CaTanPlane},
there is a constant $r_0>0$ so that $\Si_\ell\cap\mathbf{B}_{r_0}(\mathbf{x}_\ell)$ can be written as a graph over some subset of $T_{\mathbf{x}_\ell}\Si_\ell$ with slope $\le \ep_0/2$.
Noting the formula 
$$\tan(\a_1+\a_2)=\f{\tan\a_1+\tan\a_2}{1-\tan\a_1\tan\a_2}\qquad \mathrm{for\ each}\ 0\le\a_1\le\a_2\le \a_1+\a_2<\pi/2.$$
Hence, for any $(x,Du_\ell(x))\in\mathbf{B}_{r_0}(\mathbf{x}_\ell)$,
\begin{equation}\aligned
|\underline{\la}_\ell(x)|\le\overline{\la}_\ell(x)\le\f{\f1{\ep_0}+\f{\ep_0}2}{1-\f1{\ep_0}\f{\ep_0}2}=2\left(\f1{\ep_0}+\f{\ep_0}2\right).
\endaligned
\end{equation}
In other words, $|D^2u_\ell(x)|$ is uniformly bounded whenever $(x,Du_\ell(x))\in\mathbf{B}_{r_0}(\mathbf{x}_\ell)$. Hence, there is a positive constant $r_0'<r_0$ (independent of $\ell$) so that
$$\Si_\ell\cap\{(x,y):\, |x-x_\ell|<r_0',y\in\R^n\}\subset \mathbf{B}_{r_0}(\mathbf{x}_\ell).$$
This implies $(0^n,Du_\ell(0^n))\in\mathbf{B}_{r_0}(\mathbf{x}_\ell)$ for all the large $\ell$ as $x_\ell\to0$. However, this violates $\lim_{\ell\to\infty}\overline{\la}_{\ell}(0^n)=\infty$. Hence, the above claim is true.

Let $\la_{1,\ell},\cdots,\la_{n,\ell}$ denote the eigenvalues of $D^2u_\ell$ with $\la_{1,\ell}\ge\cdots\ge\la_{n,\ell}$. From Theorem \ref{CaTanPlane}, there is a suitably small $\de\in(0,\de_0)$ so that (at least) one of following occurs:
\begin{itemize}
  \item[i)] for each large $\ell$ there is a $1\le m\le n-2$ so that
$\la_{m,\ell}>2\la_{m+1,\ell}\ge1$ on $B_\de$;
  \item[ii)] $\la_{n-1,\ell}\ge2n^2/\tau$ on $B_\de$ for all the large $\ell$.
\end{itemize}
In fact, if i) falis, then combining the above claim and Theorem \ref{CaTanPlane}, all $(n-1)$-eigenvalues $\la_{1,\ell},\cdots,\la_{n-1,\ell}$ should be large on $B_\de$ for the small $\de>0$ and all the large $\ell$, which means that ii) holds.
From Theorem \ref{Hesslan-2} or Corollary \ref{Hessvspe}, it follows that $\limsup_{\ell\to\infty}\overline{\la}_{\ell}(0^n)<\infty$. It's a contradiction. We complete the proof.
\end{proof}

Now, it's the position to prove Theorem \ref{main}.
\begin{proof}
Let us prove it by contradiction.
Suppose Theorem \ref{main} fails. There are an infinite sequence $\{(u_\ell,\th_\ell)\}_{\ell\ge1}$ in $\mathbb{F}_n(\La,\k)$ and $\{x'_\ell\}\subset B_{1/2}$ so that
$$\lim_{\ell\to\infty}\overline{\la}_{\ell}(x'_\ell)=\infty,$$
where $\overline{\la}_{\ell}$ denotes the maximal eigenvalue of $D^2u_\ell$. 
From Lemma \ref{blalul}, there is a sequence $\{x_\ell\}\subset B_{1}$ with $\limsup_{\ell\to\infty}|x_\ell|\le1/2$ so that
$$\lim_{\ell\to\infty}\underline{\la}_{\ell}(x_\ell)=-\infty,$$
where $\underline{\la}_{\ell}$ denotes the minimal eigenvalues of $D^2u_\ell$.
Put $\mathbf{x}_\ell=(x_\ell,Du_\ell(x_\ell))$ and $L_\ell=G_{Du_\ell}$.

Without loss of generality, we suppose that 
$\mathbf{x}_\ell$ converges to a point $\mathbf{x}_*=(x_*,y_*)\in\overline{B}_{1/2}\times\overline{B}_{\k}$.
Given $\a\in(0,1)$.
From Theorem \ref{CaTanPlane}, up to a choice of the subsequence, there is a $C^{1,\a}$ submanifold $L$ in $\mathbf{B}_{1/4}(\mathbf{x}_*)$ with $\mathbf{x}_*\in L$ so that $L_\ell\cap\mathbf{B}_{1/4}(\mathbf{x}_\ell)$ converges to $L$ in $C^{1,\a}$-sense and
$$T_{\mathbf{x}_*}L=\{(x,y)\in\R^n\times\R^n:\, x=x_*\}.$$
Let 
$$L^*=\{\mathbf{x}\in \R^{2n}:\, \mathbf{x}+\mathbf{x}_*\in L\},$$
then $T_{\mathbf{0}}L^*=\{0^n\}\times\R^n$, and $\mathfrak{R}_{\be^*}(T_{\mathbf{0}}L^*)=\R^n\times\{0^n\}$ with $\be^*=(\pi/2,\cdots,\pi/2,-\pi/2)\in\R^n$ and $\mathfrak{R}_{\be^*}$ defined in \eqref{Rbe**}.
Put 
$$\Si=\mathfrak{R}_{\be^*}(L^*),\qquad\mathrm{and}\qquad L^*_\ell=\{\mathbf{x}\in \R^{2n}:\, \mathbf{x}+\mathbf{x}_\ell\in L_\ell\}.$$
From Theorem \ref{CaTanPlane} again, there is a small constant $\de\in(0,1/8)$ depending only on $n,\k,\La$ so that $\Si\cap\mathbf{B}_{2\de}$ is a connected Lagrangian graph over some open set $W$ of $\R^n$ with the graphic function $Dw$ and $B_\de\subset W$ such that $|w(0^n)|=|Dw(0^n)|=|D^2w(0^n)|=0$, $|D^2w|<1$ on $\overline{B}_\de$ and \eqref{W3q} holds.
Moreover, for each large $\ell$, $\mathfrak{R}_{\be^*}(L^*_\ell)\cap\mathbf{B}_{2\de}$ is a connected Lagrangian graph over $W$ with the graphic function $Dw_\ell$ such that $|w_\ell(0^n)|=|Dw_\ell(0^n)|=0$, $\lim_{\ell\to\infty}|D^2w_\ell(0^n)|=0$, $|D^2w_\ell|\le1$ on $B_\de$ and \eqref{W3q} holds for $w_\ell$ instead of $w$.
Combining Lemma \ref{wnnge0} and Theorem \ref{Harnack} for $w_\ell$ instead of $w$, we conclude that $w_{nn}(x)=0$ for each $(x,Dw(x))\in\Si\cap\overline{\mathbf{B}}_{\de/2}$ by letting $\ell\to\infty$. In other words,
\begin{equation}\aligned
\Si\cap\overline{\mathbf{B}}_{\de/2}=\{(x,y)\in\R^n\times\R^n:\, x\in\overline{B}_{\de/2},\ y=0^n\},
\endaligned
\end{equation}
and
\begin{equation}\aligned
L\cap\overline{\mathbf{B}}_{\de/2}(\mathbf{x}_*)=\{(x,y)\in\R^n\times\R^n:\, x=x_*,\ y\in\overline{B}_{\de/2}(y_*)\}.
\endaligned
\end{equation}
Let $\p(L\setminus(\{x_*\}\times\R^n))$ denote the boundary of $L\setminus(\{x_*\}\times\R^n)$ in $L$.
If $\p(L\setminus(\{x_*\}\times\R^n))\neq\emptyset$, then we consider a point $\mathbf{z}_*=(x_*,z_*)\in\p(L\setminus(\{x_*\}\times\R^n))$, and use the above argument (with Theorem \ref{CaTanPlane}) to get 
\begin{equation}\aligned
L\cap\overline{\mathbf{B}}_{\de_*}(\mathbf{z}_*)=\{(x,y)\in\R^n\times\R^n:\, x=x_*,\ y\in\overline{B}_{\de_*}(z_*)\}
\endaligned
\end{equation}
for some constant $\de_*\in(0,\de/2)$.
This contradicts to the choice of $\mathbf{z}_*$. We complete the proof of Theorem \ref{main}.
\end{proof}

\section{Application to Dirichlet problem of Lagrangian mean curvature equation}

In this section, let $\Om$ be a bounded, uniformly convex domain in $\R^n$.
We consider the Dirichlet problem for the Lagrangian mean curvature equation
\begin{equation}\label{Diri}
\left\{\begin{split}
\mathrm{tr}(\arctan D^2u)=&\th\qquad\qquad\quad\, \mathrm{on}\ \Om\\
u=&\psi \qquad\qquad\quad \mathrm{on}\ \p\Om.
\end{split}\right.
\end{equation}
From Theorem 1.1 in Bhattacharya \cite{Ba3} and Corollary 1.1 in Bhattacharya-Mooney-Shankar \cite{BMS}, for each $C^{1,1}$ function $\th:\overline{\Om}\, \to[\f{n-2}2\pi,\f n2\pi)$ and each $\psi\in C^0(\p\Om)$, the Dirichlet problem \eqref{Diri} admits a unique solution $u\in C^3(\Om)\cap C^0(\overline{\Om})$.
Boundedness of the Lagrangian graph $L=G_{Du}$ requires $u\in C^{0,1}(\overline{\Om})$. From \S 14 in \cite{GT}, boundary gradient estimate usually needs the boundary data $\psi\in C^{1,1}(\p\Om)$ (see also Lu \cite{Lu}).

Using Theorem \ref{main} and results in \cite{Ba3}, we can weaken the $C^{1,1}$ condition of the phase $\th$ to Lipschitz one under $C^{1,1}$ boundary data as follows.
\begin{theorem}\label{Dirith}
Let $n\ge2$,  $\th:\overline{\Om}\, \to[\f{n-2}2\pi,\f n2\pi)$ be a Lipschitz function on $\overline{\Om}$ and $\psi\in C^{1,1}(\p\Om)$, the Dirichlet problem \eqref{Diri}
admits a unique solution $u\in C^{2,\a}(\Om)\cap C^{0,1}(\overline{\Om})$ for each $\a\in(0,1)$.
\end{theorem}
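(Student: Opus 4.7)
The strategy is approximation: smooth the phase and pass to the limit using Theorem \ref{main}. First I would extend $\th$ to a Lipschitz function on $\R^n$ via McShane's extension, preserving both $\mathbf{Lip}\,\th$ and the condition $\th\ge\f{n-2}2\pi$ (by taking the pointwise maximum of the extension with the constant $\f{n-2}2\pi$). Mollifying then yields $\th_\ep\in C^\infty(\R^n)$ with $\mathbf{Lip}\,\th_\ep\le\mathbf{Lip}\,\th$, $\th_\ep\ge\f{n-2}2\pi$ and $\th_\ep<\f n2\pi$ on $\overline{\Om}$, and $\th_\ep\to\th$ uniformly on $\overline{\Om}$. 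By Theorem 1.1 of Bhattacharya \cite{Ba3} together with Corollary 1.1 of Bhattacharya--Mooney--Shankar \cite{BMS}, there is, for each $\ep$, a unique solution $u_\ep\in C^3(\Om)\cap C^0(\overline{\Om})$ of \eqref{Diri} with phase $\th_\ep$ and boundary data $\psi$.

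Next I would establish three $\ep$-independent estimates. The maximum principle gives $\|u_\ep\|_{C^0(\overline{\Om})}\le C(\|\psi\|_{C^0},\mathrm{diam}\,\Om,n)$. For the boundary gradient, the $C^{1,1}$ data $\psi$, uniform convexity of $\Om$, and supercriticality $\th_\ep\ge\f{n-2}2\pi$ permit the standard upper/lower barrier construction as used in \cite{CNS,CP,Lu}, with constants depending only on $n$, $\|\psi\|_{C^{1,1}(\p\Om)}$, $\Om$ and the $C^0$-size of $\th_\ep$, which is controlled in $[\f{n-2}2\pi,\f n2\pi)$ uniformly in $\ep$. Combined with an interior Lipschitz estimate (e.g.\ coming from semiconvexity $D^2u_\ep\ge-\cot(\th_\ep-\f{n-2}2\pi)I$ recorded as (2.1) in \cite{Y1}, together with the $C^0$-bound, or directly via a gradient maximum principle), this yields $\|Du_\ep\|_{C^0(\overline{\Om})}\le\k$ with $\k$ independent of $\ep$. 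Then for each ball $B_{2r}(x_0)\subset\Om$, after translation to $x_0$ one has $(u_\ep,\th_\ep)\in\mathbb{F}_n(2r\,\mathbf{Lip}\,\th,\k/r,2r)$, and Theorem \ref{main} furnishes the uniform Hessian bound $|D^2u_\ep|(x_0)\le C(n,\mathbf{Lip}\,\th,\k,r)$. Evans--Krylov--Safonov then promotes this to uniform $C^{2,\a}_{loc}(\Om)$ bounds for every $\a\in(0,1)$.

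Arzel\`a--Ascoli extracts a subsequence converging to some $u\in C^{2,\a}_{loc}(\Om)\cap C^{0,1}(\overline{\Om})$ with $u=\psi$ on $\p\Om$ by uniform convergence on $\overline{\Om}$; passing pointwise to the limit in $\mathrm{tr}(\arctan D^2u_\ep)=\th_\ep$ shows $u$ solves \eqref{Diri} classically on $\Om$. Uniqueness follows from the comparison principle for the degenerate elliptic equation \eqref{LE*} with continuous supercritical phase, since the level set $\{\sum_i\arctan\la_i=\th(x)\}$ is convex whenever $\th(x)\ge\f{n-2}2\pi$ (see \cite{Ba3}). The principal obstacle is making the boundary gradient estimate genuinely $\ep$-independent, since barrier constructions in the literature are usually stated assuming $\th\in C^{1,1}$ with constants depending on $\|\th\|_{C^{1,1}}$. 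One must either (i) revisit those constructions to verify that the dependence on the phase enters only through $\|\th\|_{C^0}$ and $\mathbf{Lip}\,\th$, or (ii) exploit the semiconvexity inherent to supercritical phases to build barriers from $C^{1,1}$ boundary data and the defining function of $\Om$ alone, thereby bypassing any direct appeal to second derivatives of $\th_\ep$.
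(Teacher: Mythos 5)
Your strategy---mollify the phase, solve the approximate Dirichlet problems, obtain $\ep$-independent $C^0$, gradient, and interior Hessian bounds via Theorem \ref{main}, then pass to the limit, with uniqueness from the comparison principle---matches the paper's skeleton. The genuine gap is the $\ep$-independent gradient estimate, which you correctly flag as the ``principal obstacle'' but then leave unresolved, merely listing two possible routes (i) and (ii). There are in fact two separate problems. First, the boundary gradient estimate: barrier constructions quoted from the $C^{1,1}$-phase literature carry constants depending on $\|\th\|_{C^{1,1}}$, and you neither re-derive them with only $\mathbf{Lip}\,\th$-dependence nor construct the alternative barriers you propose. Second, the interior route via semiconvexity $D^2u_\ep\ge-\cot(\th_\ep-\f{n-2}2\pi)I$ degenerates as $\th_\ep\downarrow\f{n-2}2\pi$, so it is not $\ep$-independent when the phase may be critical; and a direct gradient maximum principle for $u_\ep$ fails because differentiating \eqref{LE*} produces a nonzero right-hand side $\th_k$ in \eqref{uijkThk}.

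The paper closes both gaps at once with a constant-phase sandwich. It approximates $\th$ by smooth phases $\th_\ell\ge\f{n-2}2\pi+\de_\ell$ that are \emph{strictly} supercritical (so that Theorem 4.1 of \cite{Ba3} yields $u_\ell\in C^2(\overline{\Om})$, giving enough boundary regularity to compare), fixes constants $\underline{\Th}\in(-\f n2\pi,-\f{n-2}2\pi)$ and $\overline{\Th}\in(\f{n-2}2\pi,\f n2\pi)$ with $\underline{\Th}\le\th_\ell\le\overline{\Th}$, and solves \eqref{SLE} with phases $\overline{\Th}$, $\underline{\Th}$ and the same boundary data $\psi_\ell$ to get $\overline{u}_\ell,\underline{u}_\ell\in C^2(\overline{\Om})$. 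The comparison principle gives $\overline{u}_\ell\le u_\ell\le\underline{u}_\ell$ with equality on $\p\Om$, which traps $|Du_\ell|$ on $\p\Om$ by $|D\overline{u}_\ell|+|D\underline{u}_\ell|$. Because these constant-phase barriers have gradients solving the \emph{homogeneous} minimal surface system, the maximum principle globalizes their gradient bounds, and the sandwich then gives $\sup_{\overline{\Om}}|Du_\ell|\le c_\Om$ with $c_\Om$ depending only on $n,\Om,|\psi|_{C^{1,1}(\p\Om)}$---entirely bypassing any dependence on derivatives of $\th_\ell$. Your mollification only guarantees $\th_\ep\ge\f{n-2}2\pi$, which in the critical case gives merely $u_\ep\in C^3(\Om)\cap C^0(\overline{\Om})$ and no way to run a boundary comparison; you should instead approximate from strictly above as the paper does.
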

\begin{proof}
There are two infinite sequences of smooth functions $\{\th_\ell\}_{\ell\ge1}$ and $\{\psi_\ell\}_{\ell\ge1}$ so that $\th_\ell:\overline{\Om}\, \to[\f{n-2}2\pi+\de_\ell,\f n2\pi)$ for a constant $\de_\ell>0$ with $\lim_{\ell\to\infty}\de_\ell=0$, $\lim_{\ell\to\infty}\th_\ell=\th$, $\lim_{\ell\to\infty}D(\th_\ell-\th)=0$ a.e., and $\psi_\ell\to\psi$ in $C^{1,1}$-sense on $\p\Om$.
From Theorem 4.1 in \cite{Ba3}, there exists a unique solution $u_\ell\in C^2(\overline{\Om})$ to 
\begin{equation}
\left\{\begin{split}
\mathrm{tr}(\arctan D^2u_\ell)=&\th_\ell\qquad\qquad\quad\, \mathrm{on}\ \Om\\
u_\ell=&\psi_\ell \qquad\qquad\quad \mathrm{on}\ \p\Om.
\end{split}\right.
\end{equation}
Let $\overline{\Th}\in(\f {n-2}2\pi,\f n2\pi)$ and $\underline{\Th}\in(-\f n2\pi,-\f {n-2}2\pi)$ be two constants  so that
$$\underline{\Th}\le\th_\ell\le\overline{\Th}\qquad\mathrm{on}\ \overline{\Om}\ \ \mathrm{for\ each}\ \ell\ge\ell_0,$$
where $\ell_0$ is a positive integer. 
From Theorem 4.1 in \cite{Ba3} again, for each $\ell\ge\ell_0$ there is a solution $\overline{u}_\ell$ (or $\underline{u}_\ell)$ in $C^2(\overline{\Om})$ to \eqref{SLE} with the phase $\overline{\Th}$ (or $\underline{\Th}$) and boundary data $\psi_\ell$ on $\p\Om$.
From comparison principle (see Theorem 3.1 and Theorem 6.1 in \cite{Ba3}), $\overline{u}_\ell\le u_\ell\le \underline{u}_\ell$ on $\overline{\Om}$, and 
$$\sup_{\p\Om}\left(|D\overline{u}_\ell|+|D\underline{u}_\ell|\right)\le c_\Omega,$$
where $c_\Omega$ depends only on $n,\Om$ and $|\psi|_{C^{1,1}(\p\Om)}$ (This was also pointed out in the proof of Lemma 5.1 in \cite{Lu}). 
Since $D\overline{u}_\ell$ and $D\underline{u}_\ell$ both satisfy minimal surface system (see \eqref{uijkThk} for instance), the maximum principle (for functions $\p_j\overline{u}_\ell$ and $\p_j\underline{u}_\ell$) implies
$$\sup_{\overline{\Om}}\left(|D\overline{u}_\ell|+|D\underline{u}_\ell|\right)\le\sup_{\p\Om}\left(|D\overline{u}_\ell|+|D\underline{u}_\ell|\right)\le c_\Omega.$$
Combining $\overline{u}_\ell\le u_\ell\le \underline{u}_\ell$ on $\overline{\Om}$, it follows that 
\begin{equation}\label{C1****}
\sup_{\overline{\Om}}|Du_\ell|\le\sup_{\overline{\Om}}\max\{|D\overline{u}_\ell|,|D\underline{u}_\ell|\}\le c_\Omega.
\end{equation}
From Theorem \ref{main} and the Evans-Krylov-Safonov theory, we get 
\begin{equation}\label{C2****}
|u_\ell|_{C^{2,\a}(\Om')}\le c'_\Omega\qquad \mathrm{for\ each}\ \Om'\subset\subset\Om,\ \a\in(0,1),
\end{equation}
where $c'_\Omega$ depends only on $n,\a,\Om',\Om$, $|\psi|_{C^{1,1}(\p\Om)}$ and the Lipschitz constant $\mathbf{Lip}\,\th$.
Up to choosing the subsequence of $\{\ell\}$, from \eqref{C2****} $u_\ell$ converges to a solution $u\in C^{2,\a}(\Om)$ satisfying $\mathrm{tr}(\arctan D^2u)=\th$. From \eqref{C1****}, we get $\sup_{\overline{\Om}}|Du|\le c_\Omega$ and $u=\psi$ on $\p\Om$.
The uniqueness follows from the maximum  principle for fully nonlinear elliptic equations. This completes the proof.
\end{proof}

As an application of Theorem \ref{Dirith}, we immediately have the following corollary by combining Theorem \ref{main} (with smooth phases approaching the Lipschitz phase), the uniqueness of solutions, and the Evans-Krylov-Safonov theory.
\begin{corollary}\label{main-Cor}
Let $u$ be a  $C^{1,1}$-solution to \eqref{LE*} a.e. on $B_1\subset\R^n$ with the Lipschitz phase $\th\ge\f{n-2}2\pi$ on $B_1$, then for each $\a\in(0,1)$
\begin{equation}\aligned\label{C2a**C11}
\sup_{x\in B_{1/2}}|D^2u(x)|+\sup_{x,x'\in B_{1/2},x\neq x'}|x-x'|^{-\a}|D^2u(x)-D^2u(x')|\le c^*
\endaligned
\end{equation}
where $c^*$ is a constant depending only on $n,\a$, $\sup_{B_1}|Du|$, and the Lipschitz norm of $\th$.
\end{corollary}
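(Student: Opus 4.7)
The plan is to reduce the Corollary to Theorem~\ref{main} through a Dirichlet approximation and a bootstrap. First I would fix $r\in(1/2,1)$ and note that $u\in C^{1,1}(B_1)$ makes $\psi:=u|_{\p B_r}$ lie in $C^{1,1}(\p B_r)$. Then choose smooth phases $\th_\ell:\overline{B_r}\to[\f{n-2}2\pi,\f n2\pi)$ obtained by mollifying $\th$ and shifting up by a vanishing constant so that $\th_\ell\ge\f{n-2}2\pi$, with $\th_\ell\to\th$ uniformly and $\mathbf{Lip}\,\th_\ell\to\mathbf{Lip}\,\th$. By Theorem~\ref{Dirith}, for each $\ell$ the Dirichlet problem on $B_r$ with phase $\th_\ell$ and boundary data $\psi$ admits a unique solution $u_\ell\in C^{2,\a}(B_r)\cap C^{0,1}(\overline{B_r})$. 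The comparison principle for the Lagrangian mean curvature equation (Theorems~3.1 and~6.1 in \cite{Ba3}) applied to the classical solutions $u_\ell$ and to $u$ (a viscosity solution, since $u\in C^{1,1}$ solves \eqref{LE*} a.e.) gives $u_\ell\to u$ uniformly on $\overline{B_r}$ because $\th_\ell\to\th$ uniformly.

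Next I would apply Theorem~\ref{main} (after rescaling from $B_r$ to $B_1$) to the smooth $u_\ell$. The proof of Theorem~\ref{Dirith} supplies a gradient bound $\sup_{B_r}|Du_\ell|\le c_1$ depending only on $n$, $r$ and $|\psi|_{C^{1,1}(\p B_r)}\le|u|_{C^{1,1}(B_1)}$, so the hypotheses of Theorem~\ref{main} are met uniformly in $\ell$. This yields a uniform-in-$\ell$ Hessian bound on $B_{r/2}$, and the Evans--Krylov--Safonov theory upgrades this to uniform $C^{2,\a}(B_{r/2})$ bounds for every $\a\in(0,1)$. Combined with the uniform convergence $u_\ell\to u$, a subsequence converges in $C^{2,\a'}_{\mathrm{loc}}(B_r)$ for each $\a'<\a$ to a limit that must coincide with $u$; in particular $u\in C^{2,\a}(B_{r/2})$ and $Du_\ell\to Du$ locally uniformly.

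Finally, I would perform a bootstrap to remove the artificial dependence on $|u|_{C^{1,1}(B_1)}$ from the constant. For any $x_0\in B_{1/2}$ pick a small $r_0>0$ with $B_{r_0}(x_0)\subset B_{r/2}$. Locally uniform convergence $Du_\ell\to Du$ on $\overline{B_{r_0}(x_0)}$ gives $\sup_{B_{r_0}(x_0)}|Du_\ell|\le\sup_{B_1}|Du|+1$ for all sufficiently large $\ell$. Re-applying Theorem~\ref{main} to $u_\ell$ on $B_{r_0}(x_0)$ and letting $\ell\to\infty$ then yields a pointwise bound on $|D^2u|(x_0)$ depending only on $n$, $r_0$, $\sup_{B_1}|Du|$ and $\mathbf{Lip}\,\th$. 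A finite covering of $\overline{B}_{1/2}$ by such balls gives the $L^\infty$ part of \eqref{C2a**C11}, and the $C^\a$ part follows from Evans--Krylov--Safonov with this Hessian bound in hand. The main obstacle is precisely the second step: the first application of Theorem~\ref{main} comes with a spurious $|u|_{C^{1,1}}$ dependence in the gradient bound; the bootstrap in the third step is what replaces it by $\sup_{B_1}|Du|$, producing the intended scale-invariant dependence.
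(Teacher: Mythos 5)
Your approach matches the paper's — Dirichlet approximation with mollified phases, Theorem~\ref{main} applied to the smooth approximants, uniqueness of solutions to identify the limit with $u$, and Evans--Krylov--Safonov — and your bootstrap correctly isolates the point the paper's one-line remark leaves implicit: the first-pass gradient bound from Theorem~\ref{Dirith} inherits a spurious dependence on $|u|_{C^{1,1}(B_1)}$ through the boundary data, and must be replaced by the interior bound $\sup_{B_1}|Du|$ before the stated dependence can be obtained.

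There is, however, a gap in the bookkeeping of the bootstrap. You require $B_{r_0}(x_0)\subset B_{r/2}$ for every $x_0\in B_{1/2}$, but since $r<1$ gives $r/2<1/2$, no such $r_0>0$ exists once $|x_0|\ge r/2$. The fix is to note that the first-pass Hessian bound, and hence the $C^{2,\a'}_{\mathrm{loc}}$ convergence of $Du_\ell$ to $Du$, holds on $B_\rho$ for every $\rho<r$ — not merely on $B_{r/2}$ — because Theorem~\ref{main} can be applied on shifted balls $B_s(y)\subset B_r$ rather than only on the ball centered at the origin. With that, take $r>3/4$ and replace the condition $B_{r_0}(x_0)\subset B_{r/2}$ by $B_{r_0}(x_0)\subset B_r$ with a fixed $r_0<r-\f12$; then the bootstrap produces the estimate on all of $B_{1/2}$ with the intended dependence on $n$, $\a$, $\sup_{B_1}|Du|$ and $\mathbf{Lip}\,\th$ only.
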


\begin{remark}
For Neumann problem to \eqref{LE*} with supercritical phase, refer to the work of Chen-Ma-Wei \cite{CMW}.
\end{remark}

\section{Appendix I}

For a vector $\be=(\be_1,\cdots,\be_n)\in[-\f{\pi}2,\f{\pi}2]^n$, let $\mathfrak{R}_\be:\,\R^n\times\R^n\to\R^n\times\R^n$ be a rotation defined by $\mathfrak{R}_\be(x,y)=(\hat{x}_1,\cdots,\hat{x}_n,\hat{y}_1,\cdots,\hat{y}_n)$ with
\begin{equation}\label{labxby}
\left\{\begin{split}
&\hat{x}_i=(\cos\be_i) x_i+(\sin\be_i) y_i\\
&\hat{y}_j=-(\sin\be_j) x_j+(\cos\be_j) y_j\\
\end{split}\right..
\end{equation}
For an open $\Om\subset\R^n$ and $u\in C^2(\Om)$, let $\Om_\be=\mathfrak{R}_\be(\Om\times\{0^n\})$, 
and $L_\be=\mathfrak{R}_\be(L)$ with $L=G_{Du}$ over $\Om$. 
Let $\mathbf{S}_\be=\mathrm{diag}\{\be_1,\cdots,\be_n\}$ denote a diagonal matrix.
Denote 
$$\sin\mathbf{S}_\be=\mathrm{diag}\{\sin\be_1,\cdots,\sin\be_n\}\qquad \mathrm{and} \qquad\cos\mathbf{S}_\be=\mathrm{diag}\{\cos\be_1,\cdots,\cos\be_n\}.$$
Let $\bar{x},\bar{y}$ be two mappings with components defined by
\begin{equation}
\left\{\begin{split}
&\bar{x}_i(x)=\cos(\be_i) x_i+\sin(\be_i)u_i(x)\\
&\bar{y}_j(x)=-\sin(\be_j) x_j+\cos(\be_j)u_j(x)\\
\end{split}\right..
\end{equation}
Let $J_\be$ be the Jacobi of the mapping $\bar{x}$, i.e.,
\begin{equation}\aligned
J_\be=\left(\f{\p \bar{x}_i}{\p x_j}\right)=\cos\mathbf{S}_\be+\sin\mathbf{S}_\be D^2u,
\endaligned
\end{equation}
and $\bar{J}_\be$ be the Jacobi of the mapping $\bar{y}$, i.e.,
\begin{equation}\aligned
\bar{J}_\be=\left(\f{\p \bar{y}_j}{\p x_i}\right)=-\sin\mathbf{S}_\be+\cos\mathbf{S}_\be D^2u.
\endaligned
\end{equation}
By a direct computation, it's not hard to see
\begin{equation}\aligned\label{JTbebarJbe}
J_\be^T\bar{J}_\be=\bar{J}_\be^TJ_\be.
\endaligned
\end{equation}

We suppose that $J_\be$ is reversible on $\Om$. 
From \eqref{JTbebarJbe}, we have
\begin{equation}\aligned\label{bJJb-1App}
\bar{J}_\be J^{-1}_\be=(\bar{J}_\be J^{-1}_\be)^T.
\endaligned
\end{equation}
Hence, $L_\be$ is a Lagrangian graph over $\Om_\be$ with the graphic function $D\bar{u}_\be$ satisfying
\begin{equation}\aligned\label{Dbarubebxx}
D\bar{u}_\be\Big|_{\bar{x}(x)}=\bar{y}(x)=-x\sin\mathbf{S}_\be+Du\Big|_{x}\cos\mathbf{S}_\be
\endaligned
\end{equation}
for some function $\bar{u}_\be$ on $\Om_\be$. Moreover, we have
\begin{equation}\aligned\label{D2bubebxx}
D^2\bar{u}_\be\Big|_{\bar{x}(x)}=\bar{J}_\be J^{-1}_\be=&\left(-\sin\mathbf{S}_\be+\cos\mathbf{S}_\be D^2u(x)\right)\left(\cos\mathbf{S}_\be+\sin\mathbf{S}_\be D^2u(x)\right)^{-1}\\
=&\left(-\tan\mathbf{S}_\be+D^2u(x)\right)\left(I_n+\tan\mathbf{S}_\be D^2u(x)\right)^{-1}.
\endaligned
\end{equation}
By a direct computation, we find that
\begin{equation}\aligned
\left(D^2u-\tan\mathbf{S}_\be\right)\left(I_n+\tan\mathbf{S}_\be D^2u\right)^{-1}
=\left(I_n+\tan\mathbf{S}_\be D^2u\right)^{-1}\left(D^2u-\tan\mathbf{S}_\be\right).
\endaligned
\end{equation}
Hence, we have
\begin{equation}\aligned\label{D2barubetan}
D^2\bar{u}_\be(\bar{x}(x))=\tan(\arctan D^2u(x)-\mathbf{S}_\be).
\endaligned
\end{equation}
Namely,
\begin{equation}\aligned\label{D2barubebxx}
\arctan\left(D^2\bar{u}_\be(\bar{x}(x))\right)=\arctan D^2u(x)-\mathbf{S}_\be.
\endaligned
\end{equation}

For $\be^*=(\f{\pi}2,\cdots,\f{\pi}2,-\f{\pi}2)$, $\sin\mathbf{S}_{\be^*}=\mathrm{diag}\{1,\cdots,1,-1\}$. 
If $D^2u$ is reversible, then from the first line in \eqref{D2bubebxx} it follows that
\begin{equation}\aligned
D^2\bar{u}_{\be^*}\Big|_{\bar{x}(x)}=-\sin\mathbf{S}_{\be^*} \left(D^2u(x)\right)^{-1}\sin\mathbf{S}_{\be^*}.
\endaligned
\end{equation}
Let $\la_1,\cdots,\la_n$ be eigenvalues of $D^2u$, and $I_*=\sin\mathbf{S}_{\be^*}$. Let $\la$ be an eigenvalue of $D^2\bar{u}_{\be^*}(\bar{x}(x))$ with the corresponding eigenvector $\mathbf{v}$. Then
\begin{equation}\aligned\label{D2uSinSv}
D^2u(x)I_* \mathbf{v}=D^2u(x)I_*\left(-\f1{\la}I_* \left(D^2u(x)\right)^{-1}I_* \mathbf{v}\right)=-\f1{\la}I_* \mathbf{v}.
\endaligned
\end{equation}
This means that $-\f1{\la_1},\cdots,-\f1{\la_n}$ are all $n$ eigenvalues of $D^2\bar{u}_{\be^*}$.

\section{Appendix II}

Given a positive continuous function $\phi$ on $(0,2]$ with $\lim_{t\to0}\phi(t)=\infty$.
There is a smooth positive function $\phi_*\le\phi$ on $(0,2]$ with $\lim_{t\to0}\phi_*(t)=\infty$. Here, $\phi_*$ can be obtained by piecewise smoothing (possibly infinitely many times).
Let  
$$\Phi(t)=\int_0^t\min\{\phi_*(s)/2,|\log(s/2)|\}ds>0\qquad\mathrm{for\ each}\ t\in(0,2].$$
It's clear that $\lim_{t\to0}\Phi(t)=0$.
Let $f(t)$ be a positive function on $(0,2]$ defined by
$$\f1{f(t)}=c+\int_{t}^2\f1{\Phi(s)}ds>0\qquad\mathrm{for\ each}\ t\in(0,2],$$
where $c>0$ is a constant so that $\phi_*\ge 2(n-1)\sup_{[0,2]}f$ on $(0,2]$.
Then 
\begin{equation}\aligned\label{propfApp1}
\lim_{t\to0}\f{1}{f(t)}=c+\int_0^2\f1{\Phi(t)}dt\ge\int_0^2\f1{\int_0^t|\log (s/2)|ds}dt=\infty.
\endaligned
\end{equation}
For a small $0<\ep<1$, we define a function $u_\ep$ on $B_1$ by
\begin{equation}\aligned
u_\ep(x)=\int_0^{|x|}\f1{f_\ep(t)}dt
\endaligned
\end{equation}
with $f_\ep=f(\ep+\cdot)$.
Since $D|x|=x/|x|$, we have
\begin{equation}\aligned
Du_\ep(x)=\f1{f_\ep(|x|)}\f{x}{|x|},
\endaligned
\end{equation}
and
\begin{equation}\aligned
D^2u_\ep(x)=\f1{|x| f_\ep(|x|)}I_n-\left(\f1{|x| f_\ep(|x|)}+\f{f_\ep'(|x|)}{f^2_\ep(|x|)}\right)\f{x^Tx}{|x|^2}.
\endaligned
\end{equation}
So, the eigenvalues of $D^2u_\ep$ are as follows:
\begin{equation}\aligned
\f1{|x| f_\ep(|x|)}\qquad&\mathrm{with\ multiplicity}\ n-1,\\ 
-\f{f_\ep'(|x|)}{f_\ep^2(|x|)}=-\f1{\Phi_\ep(|x|)}\qquad&\mathrm{with\ multiplicity}\ 1,
\endaligned
\end{equation}
where $\Phi_\ep=\Phi(\ep+\cdot)$. Hence, $u_\ep$ is non-convex with the minimal eigenvalue $\underline{\la}_\ep(x)=-1/\Phi_\ep(|x|)$. In particular, $\lim_{\ep\to0}\underline{\la}_\ep(0^n)=-\infty$.

Let $\th_\ep=\mathrm{tr}\left(\arctan D^2u_\ep\right)$. Then
\begin{equation}\aligned\label{thep***}
\th_\ep(x)=&\f{n-1}2\pi-(n-1)\arctan\left(|x| f_\ep(|x|)\right)-\arctan\left(\Phi^{-1}_\ep(x)\right)\\
=&\f{n-2}2\pi+\arctan\left(\Phi_\ep(|x|)\right)-(n-1)\arctan\left(|x| f_\ep(|x|)\right).
\endaligned
\end{equation}
Hence, 
\begin{equation}\aligned\label{thepREP}
D\th_\ep(x)=\f{\Phi_\ep'(|x|)}{1+\Phi_\ep^2(|x|)}\f{x}{|x|}-(n-1)\f{f_\ep(|x|)+|x|f'_\ep(|x|)}{1+|x|^2 f_\ep^2(|x|)}\f{x}{|x|}.
\endaligned
\end{equation}
From $\phi_*\ge 2(n-1)\sup_{[0,2]}f$ on $(0,1]$, for $0<\ep<<1$ it follows that
\begin{equation}\aligned\label{Phiep|x|fep|x|}
\Phi_\ep(|x|)\ge(n-1)\left(\sup_{[0,2]}f\right)\int_0^{|x|+\ep}1ds\ge(n-1)|x| f_\ep(|x|)\qquad \mathrm{for\ each}\ x\in B_1.
\endaligned
\end{equation}
This implies 
$$\f{n-2}2\pi\le\th_\ep(x)<\f{n}2\pi\qquad \mathrm{for\ each}\ x\in B_1.$$
Let $\phi_\ep=\phi_*(\ep+\cdot)$. From 
$$tf'_\ep(t)=\f{tf^2_\ep(t)}{\Phi_\ep(t)},$$
with \eqref{propfApp1}\eqref{Phiep|x|fep|x|} we have 
$$f_\ep(|x|)+|x|f'_\ep(|x|)\le f_\ep(|x|)+\f1{n-1}f_\ep(|x|)\le\f1{2(n-1)}\phi_\ep(|x|)$$
up to a choice of suitably large $c>0$.
Hence, from the definition of $\Phi$ and \eqref{thepREP} we have
\begin{equation}\aligned
|D\th_\ep|(x)\le(n-1)\f{f_\ep(|x|)+|x|f'_\ep(|x|)}{1+|x|^2 f_\ep^2(|x|)}+\f{|\Phi_\ep'|(|x|)}{1+\Phi_\ep^2(|x|)}\le\f{\phi_\ep(|x|)}2+\f{\phi_\ep(|x|)}2=\phi_\ep(|x|).
\endaligned
\end{equation}

\bibliographystyle{amsplain}

\end{document}